\pgfplotsset{compat=1.14} 
\def\RSthmtxt{theorem~}\newref{thm}{name = \RSthmtxt}}
\def\RSlemtxt{lemma~}\newref{lem}{name = \RSlemtxt}}
\numberwithin{equation}{section}
\numberwithin{figure}{section}
\theoremstyle{plain}
\newtheorem{thm}{\protect\theoremname}[section]
  \theoremstyle{plain}
  \newtheorem{cor}[thm]{\protect\corollaryname}
  \theoremstyle{plain}
  \newtheorem{prop}[thm]{\protect\propositionname}
  \theoremstyle{plain}
  \newtheorem{lem}[thm]{\protect\lemmaname}
  \theoremstyle{remark}
  \newtheorem*{rem*}{\protect\remarkname}
  \theoremstyle{definition}
  \providecommand{\corollaryname}{Corollary}
  \providecommand{\lemmaname}{Lemma}
  \providecommand{\propositionname}{Proposition}
  \providecommand{\remarkname}{Remark}
\providecommand{\theoremname}{Theorem}
\def\R {\mathbb{R}}
\def\eps{\varepsilon}
\newcommand{\id}{\mathrm{Id}}
\DeclareMathOperator{\ind}{index}
\begin{document}

\title[Competition in periodic media, III]{Competition in periodic media: III \textendash{} Existence \& stability
of segregated periodic coexistence states}

\author{L\'{e}o Girardin$^{1}$ \and Alessandro Zilio$^{2}$}

\thanks{The research leading to these results has received funding from the
European Research Council under the European Union's Seventh Framework
Programme (FP/2007-2013) / ERC Grant Agreement n.321186 \textendash{}
ReaDi \textendash{} Reaction\textendash Diffusion Equations, Propagation
and Modelling held by Henri Berestycki. \\
A. Zilio was supported also by the ERC Advanced Grant 2013 n. 339958 \textendash{} COMPAT \textendash{} Complex Patterns for Strongly Interacting Dynamical Systems held by Susanna Terracini. \\
$^{1}$ Laboratoire Jacques-Louis Lions, CNRS UMR 7598, Universit\'{e}
Pierre et Marie Curie, 4 place Jussieu, 75252 Paris Cedex 05, France.\\
$^{2}$ Laboratoire Jacques-Louis Lions, CNRS UMR 7598, Universit\'{e}
Paris Diderot, B\^{a}timent Sophie Germain, 75205 Paris Cedex 13, France.}

\email{$^{1}$ girardin@ljll.math.upmc.fr}

\email{$^{2}$ azilio@math.univ-paris-diderot.fr}
\begin{abstract}
In this paper we consider a system of parabolic reaction-diffusion equations with strong competition and two related scalar reaction-diffusion equations. We show that in certain space periodic media with large periods, there exist periodic, non-constant, non-trivial, stable stationary states. We compare our results with already known results about the existence and nonexistence of such solutions. Finally, we provide ecological interpretations for these results.
\end{abstract}

\keywords{Competition\textendash diffusion system, periodic media, segregation,
stability.}

\subjclass[2000]{35B10, 35B35, 35B40, 35K57, 92D25.}
\maketitle

\section{Introduction}

We construct stable periodic sign-changing steady states in one-dimensional
spatially periodic media for the equation
\begin{equation}
\partial_{t}z-\partial_{xx}z=f\left(z,x\right)\label{eq:parabolic_semilinear_equation}
\end{equation}
and its quasi-linear counterpart
\begin{equation}
\partial_{t}\left(\sigma(z)z\right)-\partial_{xx}z=f\left(z,x\right),\label{eq:parabolic_quasilinear_equation}
\end{equation}
 where 
\[
f:\left(z,x\right)\mapsto\mu_{1}(x)\left(a_{1}-\frac{1}{\alpha}z\right)z^{+}-\frac{1}{d}\mu_{2}(x)\left(a_{2}+\frac{1}{d}z\right)z^{-}
\]
and the positive function $\sigma$ is
\[
\sigma:z\mapsto\mathbf{1}_{z>0}+\frac{1}{d}\mathbf{1}_{z<0}.
\]
Here $L$, $a_{1}$, $a_{2}$, $\alpha$ and $d$ are positive
constants, $\mu_{1},\mu_{2}\in L^{\infty}\left(\mathbb{R},\left(0,+\infty\right)\right)$
are positive $L$-periodic functions, $z^{+}=\max\left(z,0\right)$ and $z^{-}=-\min\left(z,0\right)$
(so that $z=z^{+}-z^{-}$).

We also construct stable periodic coexistence steady states for the following competition\textendash diffusion
system:
\begin{equation}
\begin{cases}
	\partial_{t}u_{1}-\partial_{xx}u_{1}=\mu_{1}(x)\left(a_{1}-u_{1}\right)u_{1} - k\omega(x)u_{1} u_{2}\\
	\partial_{t}u_{2}-d\partial_{xx}u_{2}=\mu_{2}(x)\left(a_{2}-u_{2}\right)u_{2} - \alpha k\omega(x)u_{1} u_{2}
\end{cases}\label{eq:parabolic_system}
\end{equation}
where $\omega\in L^{\infty}\left(\mathbb{R},\left(0,+\infty\right)\right)$ is positive and $L$-periodic (with a normalized mean value, say).

System (\ref{eq:parabolic_system}) belongs to the wider class of elliptic or parabolic systems of Lotka--Volterra type in the presence 
of strong competition, and (\ref{eq:parabolic_semilinear_equation}) and (\ref{eq:parabolic_quasilinear_equation}) 
are related to its singular \textit{strong competition limit} $k\to+\infty$. To our knowledge, the study of the 
strong competition limit appeared first in \cite{Dancer_Du_1994} as a way to model 
biological species that are fiercely competing for the same resource. The literature on this subject is very vast, 
varying from existence and uniqueness results \cite{Dancer_1991}, multiplicity results in presence 
of strong competition \cite{Dancer_Du_1994} and the rigorous proof of Gause's
competitive exclusion \cite{Kishimoto_Wein, Dancer_Hilhors} stating that in the homogeneous case, 
non-constant solutions are necessarily unstable (in convex domains).  We refer the interested reader 
to these contributions and the references therein.

More recently, the strong competition limit in periodic media was the object of investigation of two papers
\cite{Girardin_2016,Girardin_Nadin_2016} by the first author and
Nadin. According to \cite{Girardin_Nadin_2016}, (\ref{eq:parabolic_quasilinear_equation})
is the equation satisfied, in the strong competition limit, by the quantity 
$\alpha u_{1}-du_{2}$ with $\left(u_{1},u_{2}\right)$
solution of (\ref{eq:parabolic_system}). Notice that, by normalizing $\left(u_1,u_2\right)$,
we can assume without loss of generality $a_1=a_2=1$. This is assumed indeed from now on. Notice
also that, although all results of \cite{Girardin_2016,Girardin_Nadin_2016} are stated for $\omega=1$, they are 
readily extended to the case of non-constant $\omega$.

Steady states of (\ref{eq:parabolic_semilinear_equation}) and of
(\ref{eq:parabolic_quasilinear_equation}) satisfy the same elliptic semilinear equation:
\begin{equation}
-z''(x)=\mu_{1}(x)\left(1-\frac{1}{\alpha}z(x)\right)z^{+}(x)-\frac{1}{d}\mu_{2}(x)\left(1+\frac{1}{d}z(x)\right)z^{-}(x).\label{eq:elliptic_equation}
\end{equation}
However, due to the different time dependencies, (\ref{eq:parabolic_semilinear_equation})
and (\ref{eq:parabolic_quasilinear_equation}) involve in general
different notions of stability and therefore different eigenproblems.
Before going any further, let us precise this important point. 

\subsection{Notions of stability}
For any functional space $X$, $X_{L\textup{-per}}$
denotes the set of $L$-periodic functions whose restriction to 
any interval of length $L$ are elements of $X$. Accordingly,
for any second order monotone elliptic operator $\mathscr{L}$, $\lambda_{1,L\textup{-per}}\left(-\mathscr{L}\right)$
denotes the periodic principal eigenvalue of $\mathscr{L}$ given
by the Krein\textendash Rutman theorem. Recall that if $\left(u_{1},u_{2}\right)$
is a solution of (\ref{eq:parabolic_system}), then the system satisfied
by $\left(u_{1},1-u_{2}\right)$ is a monotone system, whence
its linearization admits indeed a periodic principal eigenvalue (details
can be found in \cite{Girardin_2016}). 

Hereafter, a solution $z\in H^{2}_{L\textup{-per}}\left(\R\right)$ of (\ref{eq:elliptic_equation})
such that the $L$-periodic function
\[
f_{1}\left[z\right]:x\mapsto\partial_{1}f\left(z(x),x\right),
\]
is well-defined (at least weakly) is referred to as \textit{linearly
stable in the sense of (\ref{eq:parabolic_semilinear_equation})}
if 
\[
\lambda_{1,\textup{$L$-per}}\left(-\frac{\mathrm{d}^{2}}{\mathrm{d}x^{2}}-f_{1}\left[z\right]\right)>0
\]
and as \textit{linearly stable in the sense of (\ref{eq:parabolic_quasilinear_equation})}
if 
\[
\lambda_{1,\textup{$L$-per}}\left(-\hat{\sigma}(z)\frac{\mathrm{d}^{2}}{\mathrm{d}x^{2}}-\hat{\sigma}(z)f_{1}\left[z\right]\right)>0,
\]
with 
\[
\hat{\sigma}:z\mapsto\mathbf{1}_{z\geq0}+d\mathbf{1}_{z<0}.
\]
The constant solutions of (\ref{eq:elliptic_equation}) are
$\alpha$, $-d$ and $0$. It is easily verified that $\alpha$
and $-d$ are linearly stable in both senses whereas $0$ is
linearly unstable (namely, not linearly stable) in both senses. 

The definition of linear stability in the sense of (\ref{eq:parabolic_quasilinear_equation})
can be formally understood by plugging perturbations of the form $\textup{e}^{-\lambda t}\varphi(x)$,
with $\varphi$ $L$-periodic, into the equation (\ref{eq:parabolic_quasilinear_equation})
linearized at an almost everywhere nonzero steady state $z$. Indeed,
such a perturbation solves the linear equation if and only if 
\[
-\lambda\sigma(z)\varphi-\varphi''=f_{1}\left[z\right]\varphi,
\]
that is, due to the almost everywhere equality $\sigma\left(z(x)\right)\hat{\sigma}\left(z(x)\right)=1$,
if and only if 
\[
-\hat{\sigma}(z)\varphi''-\hat{\sigma}(z)f_{1}\left[z\right]\varphi=\lambda\varphi.
\]

Similarly, a steady state solution $\left(u_{1},u_{2}\right)$ of
(\ref{eq:parabolic_system}) is a solution of 
\begin{equation}
\left\{ \begin{matrix}-u_{1}''(x)=\mu_{1}(x)\left(1-u_{1}(x)\right)u_{1}(x)-k\omega(x)u_{1}(x)u_{2}(x)\\
-d u_{2}''(x)=\mu_{2}(x)\left(1-u_{2}(x)\right)u_{2}(x)-\alpha k\omega(x)u_{1}(x)u_{2}(x)
\end{matrix}\right.\label{eq:elliptic_system}
\end{equation}
and is referred to as \textit{linearly stable}
if 
\[
\lambda_{1,\textup{$L$-per}}\left(-\left(\begin{matrix}\frac{\mathrm{d}^{2}}{\mathrm{d}x^{2}}+\mu_{1}\left(1-2u_{1}\right)-k\omega u_{2} & k\omega u_{1}\\
\alpha k\omega u_{2} & d\frac{\mathrm{d}^{2}}{\mathrm{d}x^{2}}+\mu_{2}\left(1-2u_{2}\right)-\alpha k\omega u_{1}
\end{matrix}\right)\right)>0.
\]
The steady states $\left(1,0\right)$ and $\left(0,1\right)$
are linearly stable whereas $\left(0,0\right)$ is linearly unstable. 

By analogy with the spatially homogeneous setting and in view of the
stability of the constant solutions, (\ref{eq:parabolic_semilinear_equation}),
(\ref{eq:parabolic_quasilinear_equation}) and (\ref{eq:parabolic_system})
are sometimes referred to as \textit{bistable}. However our main contribution is to prove that this terminology can be misleading:
because of the spatial heterogeneity, a third stable state can very
well exist.

Let us point out that the previous two parts of the series 
\textquotedblleft \textit{Competition in periodic media}\textquotedblright \cite{Girardin_2016,Girardin_Nadin_2016} only
used the notion of stability in the sense of the system (\ref{eq:parabolic_system}).
This explains why the two notions of stability for the segregated equation (\ref{eq:elliptic_equation}) are only introduced now.

\subsection{Main results}

Let $\left(r_0,r_1,r_2\right)\in\left(0,1\right)^3$ such that $2r_0+2r_1+2r_2=1$.
Let $\left(M_1,M_2\right)\in\left(0,+\infty\right)^2$ and define two $1$-periodic functions $\mu_1^\star$ and $\mu_2^\star$ by
\[
\left(\mu_1^\star\right)_{|\left[0,1\right]}=M_1\mathbf{1}_{\left[0,r_1\right]}+M_1\mathbf{1}_{\left[r_1+2r_0+2r_2,1\right]}
\]
\[
\left(\mu_2^\star\right)_{|\left[0,1\right]}=M_2\mathbf{1}_{\left[r_1+r_0,r_1+r_0+2r_2\right]}
\]
and, for all $L>0$, 
\[
\left(\mu_1^L,\mu_2^L\right):x\mapsto\left(\mu_1^\star,\mu_2^\star\right)\left(\frac{x}{L}\right).
\]

Our first main result is concerned with the equation (\ref{eq:elliptic_equation}). 

\begin{thm}
\label{thm:For_the_segregated_bistable_equation} There  exists $\underline{L}>0$ such
that, for all $L>\underline{L}$, (\ref{eq:elliptic_equation}) with $\left(\mu_1,\mu_2\right)=\left(\mu_1^L,\mu_2^L\right)$
or with $\left(\mu_1,\mu_2\right)=\left(\mu_1^L+\mu_2^L,\mu_1^L+\mu_2^L\right)$
admits a linearly stable in both senses, sign-changing, $L$-periodic solution.

Furthermore, for all $L>\underline{L}$, there exist a neighborhood $U_L$ of $\left(\mu_{1}^L,\mu_{2}^L\right)$ in the
topology of $\left(L_{L\textup{-per}}^{\infty}\right)^{2}$ and a neighborhood $V_L$ of $\mu_{1}^L+\mu_{2}^L$ in the
topology of $\left(L_{L\textup{-per}}^{\infty}\right)$ such that,
for all $\left(\mu_{1},\mu_{2}\right)\in U_L$ and all $\mu\in V_L$, (\ref{eq:elliptic_equation}) with 
$\left(\mu_1,\mu_2\right)$ or $\left(\mu,\mu\right)$
admits a linearly stable in both senses, sign-changing, $L$-periodic solution.
\end{thm}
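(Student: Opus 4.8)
The plan is to construct the sign‑changing solution by a gluing / matched‑asymptotics argument exploiting the large period $L$. Rescaling $x = Ly$, equation \eqref{eq:elliptic_equation} becomes $-z''(y) = L^2 g(z(y), y)$ on the $1$‑periodic torus, where $g$ is the nonlinearity built from $\mu_1^\star, \mu_2^\star$. On the three distinguished blocks — the two $\mu_1^\star$‑blocks of total length $2r_1$, the central $\mu_2^\star$‑block of length $2r_2$, and the three "gap" blocks where both coefficients vanish — the dynamics are different: on a $\mu_1$‑block the equation pushes $z$ towards the stable state $\alpha$, on the $\mu_2$‑block towards $-d$, and on a gap block $z$ is affine. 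Because $L$ is large, $L^2$ is large, so on each active block the solution is, to leading order, locked near the corresponding stable constant except in thin boundary layers. The idea is therefore to look for a solution that is roughly $\alpha$ on the $\mu_1$‑support, roughly $-d$ on the $\mu_2$‑support, and transitions through $0$ in the gaps; the transitions are controlled by the scalar ODE phase portraits (heteroclinic‑type connections for the relevant one‑well problems), and the matching conditions at the endpoints of the blocks fix the free parameters. For the second coefficient choice $(\mu,\mu) = (\mu_1^\star+\mu_2^\star,\mu_1^\star+\mu_2^\star)$ the construction is analogous, with the roles of the two wells distributed differently but the same layer analysis applying.

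The key steps, in order, are: (i) set up the shooting/matching problem on one period, parametrising candidate solutions by their values and derivatives at the interfaces between blocks, and solve the limiting ($L=\infty$) problem explicitly to produce a nondegenerate approximate solution $\bar z_L$ that is sign‑changing and $L$‑periodic; (ii) upgrade $\bar z_L$ to an exact solution by an implicit‑function / fixed‑point argument, which requires invertibility of the linearised operator $-\mathrm{d}^2/\mathrm{d}x^2 - f_1[\bar z_L]$ on $L$‑periodic functions, uniformly as $L\to\infty$; (iii) prove linear stability in the sense of \eqref{eq:parabolic_semilinear_equation}, i.e. $\lambda_{1,L\text{-per}}\!\left(-\frac{\mathrm d^2}{\mathrm dx^2}-f_1[z]\right)>0$, by showing the potential $f_1[z]$ is (to leading order) the negative quantities $f_1$ at $\alpha$ and at $-d$ on the bulk of the period, so the principal eigenvalue is bounded below by a positive constant as $L\to\infty$; (iv) deduce stability in the sense of \eqref{eq:parabolic_quasilinear_equation} from the same computation, noting that $\hat\sigma(z)$ is a positive bounded weight and that the weighted eigenvalue $\lambda_{1,L\text{-per}}\!\left(-\hat\sigma(z)\frac{\mathrm d^2}{\mathrm dx^2}-\hat\sigma(z)f_1[z]\right)$ has the same sign as the unweighted one (both being governed by the sign of the potential on the bulk); (v) finally, obtain the $U_L$ and $V_L$ perturbation statement: since the exact solution is constructed via an implicit function theorem around a linearly nondegenerate solution, and since the principal eigenvalue depends continuously on the coefficients in $L^\infty$, both the existence and the strict stability inequality persist under small $L^\infty$‑perturbations of $(\mu_1^L,\mu_2^L)$ and of $\mu_1^L+\mu_2^L$.

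I expect the main obstacle to be step (ii)–(iii): controlling the linearised operator around the approximate solution uniformly in $L$. The difficulty is that the approximate solution has internal transition layers whose width does not shrink (they live on an $O(1)$ $x$‑scale inside the $O(L)$ gaps), and near these layers the potential $f_1[\bar z_L]$ is not sign‑definite; one must verify that these layers contribute only a bounded, localised perturbation that cannot produce a nonpositive periodic principal eigenvalue, e.g. by a test‑function argument comparing with the Dirichlet eigenvalue on a large bulk interval plus a compact correction. A secondary subtlety is that $f_1[z]$ involves the non‑smooth terms $z^\pm$ and the indicators in $\sigma,\hat\sigma$, so one has to check that $z$ crosses zero transversally (finitely many times per period) so that $f_1[z]\in L^\infty_{L\text{-per}}$ is genuinely well defined and the eigenvalue problems are the standard ones; transversality will follow from the ODE phase‑plane analysis in step (i).
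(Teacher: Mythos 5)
Your outline points in the right direction---glue one-well solutions across the blocks, transition through zero in the gaps, then perturb with the implicit function theorem---but the two obstacles you flag as ``the main difficulty'' are not obstacles in this setting, and your proposed workarounds for them are both unnecessary and, as stated, unlikely to close. You do \emph{not} need to build an approximate solution and correct it by a fixed-point argument, and you do \emph{not} need uniform-in-$L$ invertibility of the linearisation. Because $\mu_1^\star,\mu_2^\star$ are piecewise constant \emph{and vanish identically on the gaps}, the matching problem has an exact solution for each $L$ above a threshold: on a $\mu_1$-block take the even solution of the Dirichlet logistic problem $-w''=M_1(\alpha-w)w$, $w(\pm r_1L)=\nu\alpha$, for a free parameter $\nu\in(\tfrac12,1)$; take the analogous (negative) profile on the $\mu_2$-block; join them by affine segments on the gaps. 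Matching boundary values and slopes reduces to a one-parameter shooting problem in $\nu$, solved for a unique $\nu_L\in(\tfrac12,1)$ by the intermediate value theorem once $L>\underline L$. This produces a $\mathscr{C}^{1,1}_{L\text{-per}}$ exact solution outright; the implicit function theorem enters only afterwards, for the $L^\infty$-neighborhoods $U_L,V_L$, and those neighborhoods \emph{must} shrink as $L\to\infty$ (otherwise you would contradict Ding--Hamel--Zhao), so chasing uniformity is chasing something false.

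Likewise, the linearised potential $f_1[v]$ is not merely ``to leading order negative on the bulk with a nonsign-definite layer''; it is $\le 0$ everywhere, with no layer contribution to control. The sign change of $v$ happens entirely inside the gaps, where $\mu_1=\mu_2=0$ and hence $f_1[v]\equiv 0$; on $\operatorname{supp}\mu_1$ the construction forces $v\ge\nu_L\alpha>\alpha/2$ so that $\mu_1(\alpha-2v)<0$, and symmetrically $v<-d/2$ on $\operatorname{supp}\mu_2$ so $\mu_2(d+2v)<0$. The Rayleigh identity $\int(\psi')^2=\int f_1[v]\psi^2+\lambda$ then yields $\lambda>0$ in one line (and the $\hat\sigma$-weighted version is the same computation), so your proposed ``Dirichlet-plus-compact-correction'' test-function argument addresses a problem that does not occur. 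The one genuine subtlety you correctly sense, the non-smoothness of $z\mapsto z^\pm$, is handled not by transversality per se but by restricting the operator $\mathscr{F}$ to an open set of $H^2_{L\text{-per}}$ on which $z^{-1}(\{0\})$ has zero Lebesgue measure; the constructed $v$ lies in such a set because it is affine and non-constant across each zero.
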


This first result will be proved by explicit construction of $v$ and non-trivial application of
the implicit function theorem. 

In biological terms, the growth rate $\mu_{1}^L+\mu_{2}^L$ corresponds
to a periodic environment where large favorable areas are separated by large neutral areas.
A neutral area could be, say,
in a woodland inhabited by herbivorous animals looking for glades, 
an area densely covered by trees where predators live and hide and where linear
death rates roughly equal linear birth rates and no intraspecific competition occurs. The associated stable
steady state describes the situation where one competitor settles in the evenly numbered
favorable areas whereas the other settles in the oddly numbered ones. This particular form
is illustrated by \figref{mu_i_and_v}.

Let us point out that well-known density results yield immediately the following corollary.
\begin{cor}
\label{cor:With_smooth_coefficients}For all $L>\underline{L}$, there exists $\left(\mu_{1},\mu_{2}\right)\in\left(\mathscr{C}_{\textup{$L$-per}}^{\infty}\left(\mathbb{R},\left(0,+\infty\right)\right)\right)^{2}$
such that (\ref{eq:elliptic_equation}) admits a linearly stable in both
senses, sign-changing, $L$-periodic solution.
\end{cor}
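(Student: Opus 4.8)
The plan is to deduce the corollary from \thmref{For_the_segregated_bistable_equation} by mollifying the coefficients $\mu_{i}^{L}$, the only genuine issue being that $\mathscr{C}^{\infty}_{L\textup{-per}}$ is \emph{not} dense in $L^{\infty}_{L\textup{-per}}$ near the discontinuous profiles $\mu_{i}^{L}$, so that the $L^{\infty}$ neighbourhood $U_{L}$ supplied by the theorem must first be traded for a neighbourhood in a weaker norm.

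First I would fix $L>\underline{L}$ and record what the proof of \thmref{For_the_segregated_bistable_equation} produces for $\left(\mu_{1},\mu_{2}\right)=\left(\mu_{1}^{L},\mu_{2}^{L}\right)$: a sign-changing $L$-periodic solution $z_{0}$ of (\ref{eq:elliptic_equation}), realised as the value at $\left(\mu_{1}^{L},\mu_{2}^{L}\right)$ of a branch $\left(\mu_{1},\mu_{2}\right)\mapsto z$ obtained from the implicit function theorem (so that, in particular, $-\tfrac{\mathrm d^{2}}{\mathrm dx^{2}}-f_{1}[z_{0}]$ is invertible), together with the two strict inequalities defining linear stability in both senses. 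Two elementary remarks on $z_{0}$ will be used throughout. Since $f\left(0,x\right)=0$ and $f\left(\cdot,x\right)$ is locally Lipschitz, the Cauchy--Lipschitz theorem prevents $z_{0}$ from vanishing together with its first derivative, so $z_{0}$ has only finitely many zeros in a period, all simple. Consequently, if $z\in C^{1}_{L\textup{-per}}\left(\R\right)$ is close enough to $z_{0}$ in $C^{1}_{L\textup{-per}}$ then $z$ has the same number of zeros, all simple and near those of $z_{0}$, whence $\mathbf{1}_{z>0}\to\mathbf{1}_{z_{0}>0}$, $\mathbf{1}_{z<0}\to\mathbf{1}_{z_{0}<0}$ and $\hat\sigma\left(z\right)\to\hat\sigma\left(z_{0}\right)$ in $L^{q}_{L\textup{-per}}$ for every finite $q$, all these families being bounded in $L^{\infty}_{L\textup{-per}}$.

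Next I would upgrade $U_{L}$ to a neighbourhood in a finite-exponent Lebesgue norm. Inspecting the proof, the implicit function theorem can be applied with the coefficient pair taken in $\left(L^{p}_{L\textup{-per}}\right)^{2}$ for a fixed finite $p$ rather than in $\left(L^{\infty}_{L\textup{-per}}\right)^{2}$: the solution operator of $-z''+z=g$ maps $L^{p}_{L\textup{-per}}$ continuously into $W^{2,p}_{L\textup{-per}}\hookrightarrow C^{1}_{L\textup{-per}}$, and the substitution map $\left(z,\mu_{1},\mu_{2}\right)\mapsto f\left(z\left(\cdot\right),\cdot\right)$ is of class $C^{1}$ from $C^{1}_{L\textup{-per}}\times\left(L^{p}_{L\textup{-per}}\right)^{2}$ to $L^{p}_{L\textup{-per}}$ --- there is no Nemytskii obstruction because $f$ is affine in $\left(\mu_{1},\mu_{2}\right)$ with $L^{\infty}$ slopes, while its nonlinear dependence on $z$ enters only through the continuous superposition maps $z\mapsto g_{i}\circ z$ and $z\mapsto g_{i}'\circ z$ valued in $C^{0}_{L\textup{-per}}$, where $g_{1}\left(z\right)=\left(1-\tfrac1\alpha z\right)z^{+}$ and $g_{2}\left(z\right)=-\tfrac1d\left(1+\tfrac1d z\right)z^{-}$. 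Using in addition the two remarks above and the standard continuous dependence of the periodic principal eigenvalue of a one-dimensional, uniformly elliptic, monotone operator upon its bounded leading and zeroth-order coefficients under $L^{q}$ perturbations, both stability functionals evaluated along the branch depend continuously on $\left(\mu_{1},\mu_{2}\right)$ in this topology. Hence the assertion of \thmref{For_the_segregated_bistable_equation} concerning the neighbourhood $U_{L}$ remains valid with $U_{L}$ replaced by some $\left(L^{p}_{L\textup{-per}}\right)^{2}$-neighbourhood $\widetilde{U}_{L}$ of $\left(\mu_{1}^{L},\mu_{2}^{L}\right)$.

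Finally I would produce smooth positive coefficients inside $\widetilde{U}_{L}$: for $\eps>0$ small, the functions $\mu_{i}^{\eps}:=\mu_{i}^{L}\ast\rho_{\eps}+\eps$, where $\rho_{\eps}$ is a smooth nonnegative $L$-periodic mollifier of unit mass, belong to $\mathscr{C}^{\infty}_{L\textup{-per}}\!\left(\R,\left(0,+\infty\right)\right)$ and converge to $\mu_{i}^{L}$ almost everywhere with a uniform bound, hence in $L^{p}_{L\textup{-per}}$; for $\eps$ small one has $\left(\mu_{1}^{\eps},\mu_{2}^{\eps}\right)\in\widetilde{U}_{L}$, which provides the claimed linearly stable in both senses, sign-changing, $L$-periodic solution of (\ref{eq:elliptic_equation}). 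The construction-theoretic ingredients being routine (they are already dealt with in the theorem), the step on which I expect to spend the most effort is the one flagged at the outset: checking that both the implicit-function-theorem branch and the two periodic principal eigenvalues vary continuously under $L^{p}$ perturbations of $\left(\mu_{1},\mu_{2}\right)$, the simplicity of the zeros of $z_{0}$ being the key input there.
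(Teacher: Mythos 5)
You correctly identify a genuine gap in the paper's one-sentence justification. Since $\mu_1^L$ is a step function taking values $0$ and $M_1>0$, no continuous function can lie within $M_1/2$ of it in $L^\infty_{L\textup{-per}}$ (consider a jump point), so the $L^\infty$-neighbourhood $U_L$ furnished by \thmref{For_the_segregated_bistable_equation} contains no smooth pair, and ``well-known density results'' do not apply directly. Your fix---observing that the implicit function theorem and the eigenvalue estimates survive when the coefficient space $\left(L^\infty_{L\textup{-per}}\right)^2$ is weakened to $\left(L^p_{L\textup{-per}}\right)^2$ for a finite $p\geq 2$, and then mollifying---is the right one, and it is a legitimate filling-in of what the paper's terse remark must tacitly presuppose.

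Two points in your write-up deserve correction or sharpening. First, you assert that $z\mapsto g_i'\circ z$ is a continuous superposition map valued in $\mathscr{C}^0_{L\textup{-per}}$; this is false, since $g_1'(z)=\mathbf{1}_{z>0}\left(1-\tfrac{2}{\alpha}z\right)$ jumps at $z=0$, which is exactly the difficulty that \lemref{C1_solution_operator} is designed to handle. The correct statement is that $z\mapsto g_i'\circ z$ is continuous from $O\subset H^2_{L\textup{-per}}$ into $L^q_{L\textup{-per}}$ for every finite $q$---the simple-zero observation you make is precisely what makes this true, and the $C^1$-smoothness of $\mathscr{F}$ on $\left(L^p_{L\textup{-per}}\right)^2\times O$ then follows by the same dominated-convergence argument as in the paper's lemma, with $\mu_i\in L^p$ replacing $\mu_i\in L^\infty$ and $p\geq 2$ ensuring the target stays in $L^2$. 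Second, an $L^p$-neighbourhood $\widetilde{U}_L$ contains pairs that are unbounded in $L^\infty$, for which the continuity of the periodic principal eigenvalue under $L^q$-perturbations of the potential needs a uniform $L^\infty$-bound to run smoothly; this is harmless here because the mollified pair $\left(\mu_1^L\ast\rho_\eps+\eps,\,\mu_2^L\ast\rho_\eps+\eps\right)$ is uniformly bounded by $M_i+1$, but the cleaner statement intersects $\widetilde{U}_L$ with a fixed $L^\infty$-ball before invoking the eigenvalue continuity. With these adjustments, the argument is sound and more careful than the paper's, which offers no proof beyond the claim that density suffices.
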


Our second main result is concerned with the system (\ref{eq:elliptic_system}) and states that 
the existence of stable steady states for the segregated equation implies the existence 
of stable steady states for the strongly competitive system. It will be proved as a consequence of 
\thmref{For_the_segregated_bistable_equation} and of degree theory. 

\begin{thm}
\label{thm:For_the_strongly_competing_system} For all $L>\underline{L}$, there exist $k^\star>0$ and 
$\left(\mu_{1},\mu_{2}\right)\in\left(\mathscr{C}_{\textup{$L$-per}}^{\infty}\left(\mathbb{R},\left(0,+\infty\right)\right)\right)^{2}$
such that, for all $k>k^\star$, (\ref{eq:elliptic_system}) admits a linearly stable, component-wise positive, $L$-periodic
solution.
\end{thm}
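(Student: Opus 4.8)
The plan is to derive a linearly stable coexistence steady state of the system from the linearly stable segregated solution provided by \thmref{For_the_segregated_bistable_equation}, passing through the strong competition limit $k\to+\infty$ by a topological degree argument and transferring linear stability via a spectral convergence statement. Fix $L>\underline{L}$. By \thmref{For_the_segregated_bistable_equation} and its smooth-coefficient version (Corollary~\ref{cor:With_smooth_coefficients}), fix $\left(\mu_{1},\mu_{2}\right)\in\left(\mathscr{C}^{\infty}_{L\textup{-per}}\left(\mathbb{R},\left(0,+\infty\right)\right)\right)^{2}$ for which \eqref{eq:elliptic_equation} admits an $L$-periodic, sign-changing solution $z$ that is linearly stable in both senses; in particular $\lambda_{1,L\textup{-per}}\bigl(-\tfrac{\mathrm{d}^{2}}{\mathrm{d}x^{2}}-f_{1}\left[z\right]\bigr)>0$ and $\lambda_{1,L\textup{-per}}\bigl(-\hat{\sigma}(z)\tfrac{\mathrm{d}^{2}}{\mathrm{d}x^{2}}-\hat{\sigma}(z)f_{1}\left[z\right]\bigr)>0$. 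The first inequality means that the scalar linearisation $-\tfrac{\mathrm{d}^{2}}{\mathrm{d}x^{2}}-f_{1}\left[z\right]$ is a positive, hence invertible, operator on the $L$-periodic functions; thus $z$ is a non-degenerate, and therefore isolated, $L$-periodic solution of \eqref{eq:elliptic_equation}, with Leray--Schauder index $+1$. The natural candidate for the limit of the coexistence states to be built is $\bar{u}=\bigl(\tfrac{1}{\alpha}z^{+},\tfrac{1}{d}z^{-}\bigr)$: it is $L$-periodic, its two components are non-trivial, $\tfrac{1}{\alpha}z^{+}\cdot\tfrac{1}{d}z^{-}=0$ almost everywhere, and $\alpha\bar{u}_{1}-d\bar{u}_{2}=z$.

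Two asymptotic ingredients are needed. First, evaluating each equation of \eqref{eq:elliptic_system} at a maximum point of $u_{i}$ yields $0\leq u_{i}\leq1$ for every component-wise non-negative $L$-periodic solution, uniformly in $k$, whence a $\mathscr{C}^{1,\gamma}_{L\textup{-per}}\left(\mathbb{R}\right)$ bound uniform in $k$ by elliptic regularity. Second, by the strong competition limit studied in \cite{Girardin_Nadin_2016} --- valid for non-constant $\omega$, as recalled in the introduction --- along any sequence $k_{n}\to+\infty$ such solutions converge, up to a subsequence, in $\mathscr{C}^{1}_{L\textup{-per}}\left(\mathbb{R}\right)$ to a pair $\left(v_{1},v_{2}\right)$ with $v_{1}v_{2}\equiv0$ such that $\alpha v_{1}-d v_{2}$ solves \eqref{eq:elliptic_equation}.

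For existence I would recast \eqref{eq:elliptic_system} as a fixed point equation $\left(u_{1},u_{2}\right)=\Phi_{k}\left(u_{1},u_{2}\right)$ in $\mathscr{C}^{1}_{L\textup{-per}}\left(\mathbb{R}\right)^{2}$, where $\Phi_{k}$ is the compact solution operator obtained by inverting $-\tfrac{\mathrm{d}^{2}}{\mathrm{d}x^{2}}+M$ and $-d\tfrac{\mathrm{d}^{2}}{\mathrm{d}x^{2}}+M$ against a truncated and shifted right-hand side, arranged so that non-negative fixed points of $\Phi_{k}$ are precisely the component-wise non-negative solutions of \eqref{eq:elliptic_system}. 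Let $\mathcal{O}$ be the ball of radius $\delta$ centred at $\bar{u}$ in $\mathscr{C}^{1}_{L\textup{-per}}\left(\mathbb{R}\right)^{2}$. Combining the uniform bound, the strong competition compactness, and the isolation of $z$, one obtains $\delta>0$ and $k^{\star}>0$ such that $\Phi_{k}$ has no fixed point on $\partial\mathcal{O}$ for any $k>k^{\star}$; hence $\deg\bigl(\id-\Phi_{k},\mathcal{O},0\bigr)$ is well-defined and, by homotopy invariance in $k\in\left(k^{\star},+\infty\right)$, does not depend on $k$. The decisive point is to identify this degree with the local index of $z$ as a solution of \eqref{eq:elliptic_equation}, namely $+1$: this is a singular-perturbation computation in which, inside $\mathcal{O}$ and for $k$ large, $z$ solves regular scalar logistic problems on the finitely many components (per period) of $\{z>0\}$ and $\{z<0\}$ and the coupling $k\omega u_{1}u_{2}$ degenerates to a transmission condition on the interface $\{z=0\}$, so that near $\bar{u}$ the map $\id-\Phi_{k}$ is, for $k$ large, a small perturbation of the invertible segregated linearisation, forcing the degree to be $+1$. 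Granting this, $\deg\bigl(\id-\Phi_{k},\mathcal{O},0\bigr)=1\neq0$, so for every $k>k^{\star}$ the system \eqref{eq:elliptic_system} has a solution $\left(u_{1}^{k},u_{2}^{k}\right)\in\mathcal{O}$; it is component-wise positive by the strong maximum principle (both components being non-trivial as $\delta$ is small), and, by the isolation of $z$, $\left(u_{1}^{k},u_{2}^{k}\right)\to\bar{u}$ as $k\to+\infty$.

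It remains to verify linear stability, i.e.\ that the periodic principal eigenvalue $\lambda_{1}^{k}$ of the $2\times2$ operator linearising \eqref{eq:elliptic_system} at $\left(u_{1}^{k},u_{2}^{k}\right)$ is positive for $k$ large. Passing to the unknown $\left(u_{1}^{k},1-u_{2}^{k}\right)$ turns the linearised system into a cooperative one, so $\lambda_{1}^{k}$ is given by the Krein--Rutman theorem; I would then prove the spectral convergence $\lambda_{1}^{k}\to\lambda_{1,L\textup{-per}}\bigl(-\hat{\sigma}(z)\tfrac{\mathrm{d}^{2}}{\mathrm{d}x^{2}}-\hat{\sigma}(z)f_{1}\left[z\right]\bigr)$ as $k\to+\infty$, the limit being positive by the choice of $z$. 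Concretely, one normalises a principal eigenfunction $\left(\varphi_{1}^{k},\varphi_{2}^{k}\right)$, uses the uniform estimates and the convergence $\left(u_{1}^{k},u_{2}^{k}\right)\to\bar{u}$ to pass to a limit supported, component by component, in $\{z>0\}$ and $\{z<0\}$, and matches the two pieces across $\{z=0\}$; the two distinct diffusivities $1$ and $d$ then produce exactly the weight $\hat{\sigma}(z)=\mathbf{1}_{z\geq0}+d\,\mathbf{1}_{z<0}$ in the limiting eigenproblem. Enlarging $k^{\star}$ if needed, $\lambda_{1}^{k}>0$ for all $k>k^{\star}$, so $\left(u_{1}^{k},u_{2}^{k}\right)$ is the desired linearly stable, component-wise positive, $L$-periodic solution. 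I expect the two singular-limit transfers to be the main obstacles: identifying the Leray--Schauder degree of $\id-\Phi_{k}$ near $\bar{u}$ with the index of $z$ (topological degree being generally discontinuous under singular perturbations), and establishing the spectral convergence of $\lambda_{1}^{k}$ to the $\hat{\sigma}(z)$-weighted periodic principal eigenvalue; both reduce to a quantitative control of the free-boundary layer, which for the explicit coefficients of \thmref{For_the_segregated_bistable_equation} is concentrated near finitely many points per period.
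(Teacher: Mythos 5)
Your overall architecture (Leray--Schauder degree for existence, convergence of principal eigenpairs for stability) matches the paper's, and the stability half --- extract a normalised principal eigenpair $(\lambda_{1}^{k},(\varphi^{k}_1,\varphi^k_2))$, show decay of $\varphi^k_1$ on $\{z<0\}$ and of $\varphi^k_2$ on $\{z>0\}$, pass to the limit, match across the interface, obtain the $\hat{\sigma}(z)$-weighted eigenproblem, and invoke Krein--Rutman uniqueness --- is essentially what the paper does in Lemmas 3.7--3.10. The claim that linear stability in both senses of $z$ makes it a non-degenerate, index-$+1$ isolated solution, and that $0\le u_i\le 1$ uniformly in $k$, are also fine.

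The genuine gap is in the existence argument, and you flag it yourself with ``Granting this.'' You propose to homotope in $k$ over $(k^{\star},+\infty)$ and then to identify $\deg(\id-\Phi_{k},\mathcal{O},0)$ with the index of $z$ by a singular-perturbation computation; but there is no degree theory directly at $k=\infty$, the operator $\Phi_{k}$ is not defined there, and the perturbation of $\id-\Phi_{k}$ away from the segregated linearisation is \emph{not} small in the operator norm (it involves the unbounded coupling $k\omega$), so the ``small perturbation of an invertible map'' heuristic does not close the argument. This is exactly the obstacle the paper circumvents by a different homotopy: it fixes a large $k$ and introduces an auxiliary parameter $t\in[0,1]$. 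The $t=1$ endpoint is the original system; the $t=0$ endpoint is the modified system (\ref{eq:homotopy_at_0}) whose reaction terms are written in terms of $(\alpha u_1-du_2)^{\pm}$, so that subtracting the two equations kills the coupling $k\omega u_1u_2$ and forces $\alpha u_1-du_2$ to satisfy the segregated equation (\ref{eq:sign_chang}) \emph{exactly}. By isolation of $v$, solutions at $t=0$ near $\left(\frac{v^+}{\alpha},\frac{v^-}{d}\right)$ satisfy $\alpha u_1-du_2=v$, which reduces the system to a single scalar KPP-type equation (\ref{eq:homotopy_at_0_uncoupled}) with a unique, linearly stable, positive solution; hence the index at $t=0$ is $+1$ (Lemma 3.4). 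The a priori estimates (Lemmas 3.1--3.2, Corollary 3.3) hold uniformly in $t\in[0,1]$ and $k$ large, so there are no zeros on $\partial\mathcal{O}_{\eps}$ along the $t$-homotopy and the degree $+1$ is transported to $t=1$. This fixed-$k$, $t$-homotopy with an explicitly solvable endpoint is the concrete mechanism that replaces the ``singular-perturbation degree computation'' you leave open; it is the key difference between your proposal and the paper's proof. If you want to repair your $k$-homotopy route you would need to supply the degree of $\Phi_{k}$ for one finite $k$ by some independent means, which in effect is what the $t$-device accomplishes.
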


\subsection{Discussion and comparison with known results}

\thmref{For_the_segregated_bistable_equation} and \thmref{For_the_strongly_competing_system}
complement interestingly a result of the first author \cite[Theorem 1.2]{Girardin_2016}
stating that, provided $L$ is sufficiently small, that is
\[
L\in\left(0,\pi\left(\left(\max_{\left[0,L\right]}\mu_{1}\right)^{-\frac{1}{2}}+\sqrt{d}\left(\max_{\left[0,L\right]}\mu_{2}\right)^{-\frac{1}{2}}\right)\right),
\]
 and provided $k$ is large enough, all $L$-periodic coexistence states are
unstable and vanish as $k\to+\infty$. 

\thmref{For_the_segregated_bistable_equation} is also directly related
to a result due to Ding, Hamel and Zhao \cite[Theorem 1.5]{Ding_Hamel_Zhao}
which shows in particular that the regular bistable equation
\[
\partial_t z-\partial_{xx} z=g_L(x,z),
\]
with $g_L:\left(z,x\right)\mapsto g\left(z,\frac{x}{L}\right)$, $g$ 
$1$-periodic with respect to $x$ and independent of $L$, $0$ and $1$ linearly stable steady states (in the standard
sense) and $\theta\in\mathscr{C}_{1-\textup{per}}\left(\mathbb{R},\left(0,1\right)\right)$
intermediate zero of $g$, admits bistable pulsating fronts connecting
$0$ and $1$ provided $L$ is large enough and the nonlinearity
$g$ satisfies 
\[
\min_{x\in\left[0,L\right]}\int_{0}^{1}g(x,z)\mathrm{d}z>0\textup{ and }
\min_{x\in\left[0,L\right]}\frac{\partial g}{\partial z}\left(x,\theta(x)\right)>0.
\]
Their proof is based on a very important result by Fang and Zhao \cite{Fang_Zhao_2011}
stating in a general setting that bistable pulsating fronts exist
if all intermediate periodic steady states are unstable and invadable.
Therefore the proof of Ding\textendash Hamel\textendash Zhao basically
shows that the above conditions imply the nonexistence of
stable periodic steady states. Importantly, 
\begin{itemize}
\item on one hand, the family of scaled functions $\left(f_L\right)_{L>\underline{L}}$ 
in \thmref{For_the_segregated_bistable_equation} satisfies 
\[\min_{x\in\left[0,L\right]}\int_{-d}^{\alpha}f_L(x,z)\mathrm{d}z=0\quad\textup{ for all }L>\underline{L}
\]
(recalling that here the two constant stable states are $-d$ and $\alpha$ instead of $0$ and $1$);
\item on the other hand, any family of regularized and positive functions obtained from \corref{With_smooth_coefficients} satisfies
indeed the above two positivity conditions, but by the result of Ding\textendash Hamel\textendash Zhao cannot be of the prescribed scaled form as $L$ varies (in other words, the neighborhoods
$U_L$ and $V_L$ obtained with the implicit function theorem are not uniform with respect to $L$ and shrink as $L\to+\infty$).
\end{itemize}

We point out that a recent paper by Zlat\v{o}s \cite{Zlatos_2017} constructed an example of periodic bistable
nonlinearity admitting no pulsating front. His result is very related to ours but remains qualitatively
different: we focus on stable intermediate steady states whereas Zlat\v{o}s focuses on nonexistence of
transition fronts. Furthermore, our construction has a very simple ecological interpretation and is
valid for all large periods, whereas the construction of Zlat\v{o}s requires a very precise period. In this 
regard, our paper is an interesting complement.

\thmref{For_the_segregated_bistable_equation} is also related to
a family of results stating, loosely speaking, that the geometry of
a homogeneous domain with boundary can block bistable propagation.
See for instance Berestycki\textendash Bouhours\textendash Chapuisat
\cite{Berestycki_Bouhours_Chapuisat} and references therein.
Although we do not prove that our periodic stable steady state is able to block the propagation of
a constant stable steady state, its mere existence makes it impossible to apply the theory of
Fang\textendash Zhao \cite{Fang_Zhao_2011} so that the existence of pulsating fronts remains
unclear. We might study in a future work whether blocking occurs or not in our case. 

Ecologically speaking, \thmref{For_the_strongly_competing_system}
shows that \textit{strong interspecific competition} and \textit{heterogeneity
of the habitat} can lead together to \textit{spatial segregation}
and therefore to \textit{speciation} and \textit{increased biodiversity}.
Having this interpretation in mind, we notice that the strength of
the competition is crucial: indeed, in the weak competition case,
Dockery\textendash Hutson\textendash Mischaikow\textendash Pernarowski
\cite{Dockery_1998} showed on the contrary that heterogeneity leads
to extinction of all competitors but the one with the lowest diffusion
rate. Ecologically, strong competition occurs for instance when resources
are rare. Mathematically, it is known to lead indeed to spatial segregation,
or in other words pattern formation, in homogeneous domains with appropriate
boundary conditions or initial conditions (see for instance \cite{Conti_Terracin,Crooks_Dancer_,Dancer_Hilhors}
and references therein). As such, our result can be seen as a contribution
to the overarching research program on pattern formation in strongly
competing systems and as one of the first results in spatially heterogeneous
domains.

It is worthy to recall that by a result of Berestycki\textendash Hamel\textendash Rossi
\cite[Proposition 6.6]{Berestycki_Hamel_Rossi}, the periodic principal
eigenvalue of a self-adjoint periodic scalar elliptic operator coincides
with the decreasing limit as $R\to+\infty$ of its Dirichlet principal
eigenvalue in the ball $\left(-R,R\right)$. Consequently, if the
domain of a linearly stable in both senses, periodic, sign-changing
steady state solution $z$ of (\ref{eq:elliptic_equation})
is restricted to a periodicity cell $\left(y,y+L\right)$ with $y$
chosen so that $z\left(y\right)=0$, then we obtain a steady state
for the corresponding Dirichlet problem which is linearly stable in
the following senses:
\[
\lambda_{1,\textup{Dir}}\left(-\frac{\mathrm{d}^{2}}{\mathrm{d}x^{2}}-f_{1}\left[z\right],\left(y,y+L\right)\right)>0,
\]
\[
\lambda_{1,\textup{Dir}}\left(-\hat{\sigma}(z)\frac{\mathrm{d}^{2}}{\mathrm{d}x^{2}}-\hat{\sigma}(z)f_{1}\left[z\right],\left(y,y+L\right)\right)>0.
\]

\subsection{What about more general bistable equations?} The particular shape of function $f$ in (\ref{eq:elliptic_equation}) 
is due to the underlying ecological model. With very few modifications,
\thmref{For_the_segregated_bistable_equation} can be extended to more general
bistable equations in periodic media, like for instance the familiar Allen--Cahn equation 
\[
	\partial_t z - \partial_{xx} z = \mu_L(x) (1-z^2)z.
\]

\subsection{Structure of the paper} In Section 2, we prove \thmref{For_the_segregated_bistable_equation}, focusing first on
the construction of $v$ and then using the implicit function theorem to obtain the 
open neighborhood $U$. In Section 3, we prove \thmref{For_the_strongly_competing_system} thanks to 
\thmref{For_the_segregated_bistable_equation} and topological arguments.

\section{The segregated bistable equation}
Our goal in this section is to prove that (\ref{eq:elliptic_equation}) admits sign-changing solutions that are also stable in 
the sense of (\ref{eq:parabolic_semilinear_equation}) and (\ref{eq:parabolic_quasilinear_equation}). 

Before going any further, we observe the following: replacing 
$\left(\frac{\mu_1}{\alpha},\frac{\mu_2}{d^2}\right)$ by $\left(\mu_1,\mu_2\right)$,
(\ref{eq:elliptic_equation}) reads
\begin{equation}
	-z''=\mu_{1}\left(\alpha -z\right)z^{+}-\mu_{2}\left(d + z\right)z^{-}.\label{eq:normalized_elliptic_equation}
\end{equation}
Hence up to end of this section we have in mind the above more compact form. The piecewise-constant functions 
$\mu_1^\star$ and $\mu_2^\star$ defined in the introduction are accordingly modified, with 
$\left(\frac{M_1}{\alpha},\frac{M_2}{d^2}\right)$ replaced by $\left(M_1,M_2\right)$.

In order to construct a sing-changing, periodic and stable solution to (\ref{eq:normalized_elliptic_equation}), we need a preliminary result concerning its linearization.

\subsection{Linearization near a non-constant stationary solution}

Since the right hand side of (\ref{eq:normalized_elliptic_equation}) is only Lipschitz continuous at $z=0$, we need some caution in 
order to properly introduce the linearization of the equation around a sign-changing steady state. Many authors have already 
addressed similar issues (see, for instance, \cite[Section 4.1]{Dancer_Hilhors}). Since we could not find the precise statement 
that we needed, we decided to present a complete proof. We wish to point out that the result can be adapted to more general 
equations (for instance bounded domains with Neumann boundary conditions).

For all $\left(\mu_{1},\mu_{2},z\right)\in\left(L_{L\textup{-per}}^{\infty}\right)^{2}\times H_{L\textup{-per}}^{2}$, we define 
\[
	\mathscr{F}: \left(L_{L\textup{-per}}^{\infty}\right)^{2}\times H_{L\textup{-per}}^{2} \to L_{L\textup{-per}}^{2}
\]
such that, for all test functions $\varphi \in H_{L\textup{-per}}^{2}$,
\begin{equation}
\label{eq:nonlinear functional}
	\left<\mathscr{F}(\mu_1, \mu_2, z),\varphi\right> = \int_{0}^{L}z'\varphi'-\int_{0}^{L}\left(\mu_{1}\left(\alpha-z\right)z^{+}-\mu_{2}\left(d+z\right)z^{-}\right)\varphi.
\end{equation}

We recall that, by Sobolev embedding, the inclusion $H^{2}_{L\textup{-per}} \hookrightarrow \mathscr{C}^{1,\frac{1}{2}}_{L\textup{-per}}$ 
holds true.

\begin{lem}
\label{lem:C1_solution_operator} Let $O\subset H_{L\textup{-per}}^{2}$ be an open set in the topology of $H_{L\textup{-per}}^{2}$ such that for all $z\in O$, the closed set $z^{-1}\left(\left\{ 0\right\} \right)$ has zero Lebesgue measure.

Then $\mathscr{F}\in\mathscr{C}^{1}\left(\left(L_{L\textup{-per}}^{\infty}\right)^{2}\times O, L_{L\textup{-per}}^{2} \right)$.

For any $\left(\mu_{1},\mu_{2},z\right)\in\left(L_{L\textup{-per}}^{\infty}\right)^{2}\times O$
and any $\left(\eta_{1},\eta_{2},w\right)\in\left(L_{L\textup{-per}}^{\infty}\right)^{2}\times H_{L\textup{-per}}^{2}$, the differential
$\mathrm{d}\mathscr{F}\left[\mu_{1},\mu_{2},z\right]$ evaluated at $\left(\eta_{1},\eta_{2},w\right)$ is
\begin{multline*}
\varphi\mapsto\int_{0}^{L}w'\varphi'-\int_{0}^{L}\left(\eta_{1}\left(\alpha-z\right)z^{+}-\eta_{2}\left(d+z\right)z^{-}\right)\varphi\\
-\int_{0}^{L}\left(\mu_{1}\left(\alpha-2z\right)\mathbf{1}_{z>0}+\mu_{2}\left(d+2z\right)\mathbf{1}_{z<0}\right)w\varphi.
\end{multline*}
\end{lem}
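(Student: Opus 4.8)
The plan is to treat $\mathscr{F}$ as the sum of a bounded linear term and a Nemytskii-type (superposition) operator, and to establish $\mathscr{C}^1$-regularity of the latter by working in the dual formulation. First I would split $\mathscr{F}(\mu_1,\mu_2,z) = \mathscr{L}z - \mathscr{N}(\mu_1,\mu_2,z)$, where $\mathscr{L}z$ is the weak Laplacian (so $\langle\mathscr{L}z,\varphi\rangle=\int_0^L z'\varphi'$, a bounded linear map $H^2_{L\text{-per}}\to L^2_{L\text{-per}}$, hence smooth), and $\mathscr{N}(\mu_1,\mu_2,z)$ is the $L^2_{L\text{-per}}$ function $\mu_1(\alpha-z)z^+ - \mu_2(d+z)z^-$. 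The point is that $\mathscr{N}$ is linear and bounded in $(\mu_1,\mu_2)$, so the only real work is differentiability in $z$, uniformly in $(\mu_1,\mu_2)$ on bounded sets. I would write the scalar nonlinearity as $g(s) = (\alpha - s)s^+$ and $h(s) = (d+s)s^-$, noting $g,h\in\mathscr{C}^1(\R)$ with $g'(s)=(\alpha-2s)\mathbf{1}_{s>0}$ (a.e.) and $h'(s)=-(d+2s)\mathbf{1}_{s<0}$ (a.e.), each locally bounded; this is where the hypothesis that $z^{-1}(\{0\})$ is Lebesgue-null enters, since it guarantees that the a.e.-defined derivatives are the ones that appear in the Gâteaux differential, the ambiguity at $s=0$ being invisible.

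Next I would verify continuity of $z\mapsto\mathscr{N}(\mu_1,\mu_2,z)$ from $O$ into $L^2_{L\text{-per}}$: by the Sobolev embedding $H^2_{L\text{-per}}\hookrightarrow\mathscr{C}^{1,1/2}_{L\text{-per}}$ recalled in the excerpt, convergence $z_n\to z$ in $H^2$ gives uniform convergence, and $g,h$ are continuous with $g(z_n),h(z_n)$ uniformly bounded, so dominated convergence yields $\mathscr{N}(\mu_1,\mu_2,z_n)\to\mathscr{N}(\mu_1,\mu_2,z)$ in $L^\infty\subset L^2$; joint continuity in all three variables then follows since $\mathscr{N}$ is affine (indeed linear) in $(\mu_1,\mu_2)$ with modulus controlled by $\|z\|_{L^\infty}$. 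Then I would identify the candidate differential: the claimed $\mathrm{d}\mathscr{F}[\mu_1,\mu_2,z]$ acting on $(\eta_1,\eta_2,w)$ is exactly $\mathscr{L}w - \big(\eta_1 g(z) - \eta_2 h(z)\big) - \big(\mu_1 g'(z) + \mu_2 h'(z)\big)w$ pairing against $\varphi$, which is manifestly a bounded linear map $(L^\infty_{L\text{-per}})^2\times H^2_{L\text{-per}}\to L^2_{L\text{-per}}$ (using $H^2\hookrightarrow L^\infty$ for the product $w\varphi$ and $z$-coefficients). The remainder estimate is the heart of the matter: I must show
\[
\frac{1}{\|w\|_{H^2}}\left\|\mathscr{N}(\mu_1,\mu_2,z+w)-\mathscr{N}(\mu_1,\mu_2,z)-\big(\mu_1 g'(z)+\mu_2 h'(z)\big)w\right\|_{L^2}\longrightarrow 0
\]
as $\|w\|_{H^2}\to 0$, uniformly for $(\mu_1,\mu_2)$ in bounded sets of $(L^\infty_{L\text{-per}})^2$ and $z$ in a fixed $H^2$-ball inside $O$. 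Pointwise, $g(z+w)-g(z)-g'(z)w = \int_0^1 (g'(z+tw)-g'(z))w\,\mathrm{d}t$, and since $|g'(z+tw)-g'(z)|\le C$ this integrand is dominated by $C|w|$; but $|w|\le\|w\|_{L^\infty}\le C\|w\|_{H^2}$, so the quotient is bounded by $C\int_0^1 \frac{1}{|w|}|g'(z+tw)-g'(z)||w|\,\mathrm{d}t$ — I would rather argue directly that $\frac{1}{\|w\|_{L^\infty}}|g(z+w)-g(z)-g'(z)w|\le\sup_{t\in[0,1]}|g'(z+tw)-g'(z)|$ and show the right-hand side tends to $0$ in $L^2_{L\text{-per}}$.

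The main obstacle — and the reason a complete proof is warranted — is precisely this last $L^2$-convergence, because $g'$ and $h'$ are \emph{discontinuous} at $s=0$, so $\sup_t|g'(z+tw)-g'(z)|$ does \emph{not} converge to $0$ pointwise on the set $\{z=0\}$; it is there that $z+tw$ may straddle $0$. The resolution is that $\{z=0\}$ has measure zero by hypothesis, so on the complement $z$ has a fixed sign and, for $\|w\|_{L^\infty}$ small enough (depending on the point), $z+tw$ keeps that sign and $g'(z+tw)-g'(z)\to 0$; combined with the uniform bound $|g'(z+tw)-g'(z)|\le C$ and the dominated convergence theorem on $(0,L)$, this gives $\big\|\sup_{t}|g'(z+tw)-g'(z)|\big\|_{L^2}\to 0$. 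Making this \emph{uniform} in $z$ over a ball and in $(\mu_1,\mu_2)$ over a bounded set requires a little care: one can use that the modulus of continuity in the dominated-convergence step can be controlled by $|\{|z|<\|w\|_{L^\infty}\}|$, which shrinks uniformly — or, more robustly, argue by contradiction extracting sequences $z_n, w_n$ and using compactness of the $H^2$-ball in $\mathscr{C}^1_{L\text{-per}}$. Finally, continuity of the differential $(\mu_1,\mu_2,z)\mapsto\mathrm{d}\mathscr{F}[\mu_1,\mu_2,z]$ in operator norm follows by the same $L^2$-dominated-convergence mechanism applied to the coefficient $\mu_1 g'(z)+\mu_2 h'(z)$ (again using that $g',h'$ are continuous off the null set $\{z=0\}$ and boundedly so), together with the elementary linear dependence on $(\eta_1,\eta_2)$; this upgrades Gâteaux differentiability with continuous derivative to $\mathscr{C}^1$, as claimed.
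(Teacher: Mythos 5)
Your proof is correct and relies on the same core mechanism as the paper's --- dominated convergence, made available by the hypothesis that $z^{-1}(\{0\})$ is Lebesgue-null together with the embedding $H^2_{L\textup{-per}}\hookrightarrow\mathscr{C}^{1,\frac12}_{L\textup{-per}}$ --- but it reaches the conclusion by a genuinely different route. The paper establishes only the G\^{a}teaux differentiability directly: it forms the difference quotient $\frac{1}{t}\bigl(\mathscr{F}[(\mu_1,\mu_2,z)+t(\eta_1,\eta_2,w)]-\mathscr{F}[(\mu_1,\mu_2,z)]\bigr)(\varphi)$, rearranges it term by term, and passes to the limit $t\to 0$ by dominated convergence, declaring the continuity of the candidate differential in $(\mu_1,\mu_2,z)$ (the ingredient that upgrades G\^{a}teaux to $\mathscr{C}^1$) to be ``readily'' seen. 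You instead go for the Fr\'{e}chet remainder directly, via the absolutely-continuous representation $g(z+w)-g(z)-g'(z)w=\int_0^1\bigl(g'(z+tw)-g'(z)\bigr)w\,\mathrm{d}t$, reduce to showing $\bigl\|\sup_{t\in[0,1]}|g'(z+tw)-g'(z)|\bigr\|_{L^2}\to 0$ as $\|w\|_{L^\infty}\to 0$, and correctly isolate the null set $\{z=0\}$ as the only place where $g'$ jumps; you also address explicitly the continuity of the derivative map by the same mechanism rather than asserting it, which makes the $\mathscr{C}^1$ claim more self-contained and surfaces a step the paper glosses over. One cosmetic slip to fix: you state $g,h\in\mathscr{C}^1(\mathbb{R})$, which is false --- $g(s)=(\alpha-s)s^+$ has $g'(0^+)=\alpha$ while $g'(0^-)=0$, so $g$ is only Lipschitz and piecewise $\mathscr{C}^1$; this is precisely why the measure-zero hypothesis on $\{z=0\}$ is indispensable, and since you immediately write ``(a.e.)'' and later acknowledge the discontinuity of $g',h'$ at $0$, nothing downstream is affected, but the sentence should be corrected.
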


\begin{rem*}
Some assumptions on the open set $O$ are necessary. In general, the G\^{a}teaux differential of $\mathscr{F}$ at $\left(\mu_{1},\mu_{2},z\right)$ in the direction $\left(\eta_{1},\eta_{2},w\right)$ fails to be linear with respect to $\left(\eta_{1},\eta_{2},w\right)$.
More precisely, it is the sum of the linear functional above and of 
\[
\varphi\mapsto-\int_{0}^{L}\left(\mu_{1}\alpha w^{+}-\mu_{2}d w^{-}\right)\mathbf{1}_{z=0}\varphi,
\]
which is non-linear with respect to $w$. We can prove this by partitioning $\mathbb{R} = \{z > 0\} \cup \{z = 0\} \cup \{z<0\}$.
\end{rem*}

\begin{proof}
The linear mapping appearing in the statement above is readily continuous. Thus we only need to show that it is indeed the 
G\^{a}teaux differential. 

Fix $\left(\mu_{1},\mu_{2},z\right)\in\left(L_{L\textup{-per}}^{\infty}\right)^{2}\times O$
and $\left(\eta_{1},\eta_{2},w\right)\in\left(L_{L\textup{-per}}^{\infty}\right)^{2}\times H_{L\textup{-per}}^{2}$. 
For all $t>0$ and all $\varphi\in H_{L\textup{-per}}^{2}$, 
\begin{multline*}
\frac{1}{t}\left(\mathscr{F}\left[\left(\mu_{1},\mu_{2},z\right)+t\left(\eta_{1},\eta_{2},w\right)\right]-\mathscr{F}\left[\left(\mu_{1},\mu_{2},z\right)\right]\right)\left(\varphi\right) = \\
\int_{0}^{L}w'\varphi'
 -\frac{1}{t}\int_{0}^{L}\left(\left(\mu_{1}+t\eta_{1}\right)\left(\alpha-(z+tw)\right)(z+tw)^+ -\mu_{1}(\alpha-z)v^+\right)\varphi \\
 +\frac{1}{t}\int_{0}^{L}\left(\left(\mu_{2}+t\eta_{2}\right)\left(d+(z+tw)\right)(z+tw)^- -\mu_{2}(d-z)z^-\right)\varphi.
\end{multline*}
The first term in the right hand side does not depend on $t$. We only need to consider the second one, as the third one can be 
dealt with in a similar way. Rearranging the terms, we find 
\begin{multline*}
	\frac{1}{t}\int_{0}^{L}\left(\left(\mu_{1}+t\eta_{1}\right)\left(\alpha-(z+tw)\right)(z+tw)^+ - \mu_{1}(\alpha-z)v^+ \right)\varphi\\
	=\int_{0}^{L}\eta_{1}\left(\alpha-(z+tw)\right)(z+tw)^+ \varphi\\
    +\int_{0}^{L}\mu_1 \frac{\left(\alpha-(z+tw)\right)(z+tw)^+ - (\alpha-z)v^+}{t} \varphi.
\end{multline*}
The dominated convergence theorem yields
\[
\int_{0}^{L}\eta_{1}\left(\alpha-(z+tw)\right)(z+tw)^+ \varphi\to\int_{0}^{L}\eta_{1}\left(\alpha-z \right)v^+ \varphi \qquad \textup{as }t\to0.
\]

Rearranging the last term of the preceding equality, we find
\begin{multline*}
\int_{0}^{L}\mu_1 \left(\frac{\left(\alpha-z-tw\right)\left(z+tw\right)^{+}-\left(\alpha-z\right)z^{+}}{t}\right) \varphi\\
	=\int_{0}^{L}\mu_1 \left(\frac{\left(z+tw\right)^{+}-z^{+}}{t}\right)\left(\alpha-z\right) \varphi-\int_0^L \mu_1  w \alpha \left(z+tw\right)^{+} \varphi.
\end{multline*}
 By dominated convergence, 
\[
\lim_{t\to0}\int_0^L \mu_1 w \alpha \left(z+tw\right)^{+} \varphi = \int_0^L \mu_1 w \alpha z^{+} \varphi.
\]
Since by assumption $z^{-1}(\{0\})$ has zero Lebesgue measure and the map $\zeta\mapsto\zeta^+$ is smooth away from $0$, the dominated convergence theorem yields once again
\[
\lim_{t\to0}\int_{0}^{L}\mu_1 \left(\frac{\left(z+tw\right)^{+}-z^{+}}{t}\right)\left(\alpha-z\right) \varphi=\int_{0}^{L}\mu_{1} w \mathbf{1}_{z>0}\left(\alpha - z\right) \varphi.
\]

This concludes the proof. 
\end{proof}

\subsection{Construction of the solution} We now proceed by constructing the solution of (\ref{eq:normalized_elliptic_equation}). To do so, we first consider the equation with piecewise-constant coefficients. In this case, solutions can be constructed by gluing together different profiles. The implicit function theorem then leads to an open neighborhood of valid coefficients near this
piecewise-constant pair.

\subsubsection{Piecewise-constant coefficients} In the following result we collect some properties of the solutions of the logistic equation with non-zero Dirichlet conditions. These properties are well known and straightforward consequences of the comparison principle. For this reason, we do not present here a fully detailed proof.

\begin{lem}
\label{lem:Dirichlet_solutions} For all $A>0$, $M>0$, $\nu \in \left[\frac{1}{2},1\right)$ and $R > 0$ there
exists a unique positive solution $w_{A, M,\nu, R}\in\mathscr{C}^{2}\left(\left[-R,R\right]\right)$ of 
\[
	\begin{cases}
	-w''= M \left(A-w\right)w &\textup{in }\left(-R,R\right)\\
	w\left(\pm R\right)= \nu A.
	\end{cases}
\]
The function $w_{A, M,\nu, R}$ is even and satisfies 
\[
	\nu A < w_{A, M,\nu, R}(x) < A \qquad \textup{for all $x\in\left(-R,R\right)$}.
\]

Furthermore, let 
\[
	\Phi : (A,M, \nu, R) \mapsto w'_{A,M,\nu,R}\left(-R\right).
\]
The following properties hold true.
\begin{enumerate}
\item $\Phi$ is positive and continuous;
\item it holds
\[
	\lim_{R\to 0^+} \Phi(A,M,\nu,R) = 0;
\]
\item there exists $\gamma_{A,M,\nu}\in\left(0,+\infty\right)$ such that
\[
	\gamma_{A,M,\nu} = \lim_{R\to+\infty} \Phi\left(A,M,\nu,R\right).
\]
Moreover, $(A,M,\nu) \mapsto \gamma_{A,M,\nu}$ is continuous with respect to $A$, $M$ and $\nu$, increasing with respect to
$A$ and $M$ and decreasing with respect to $\nu$. In particular 
$0 = \lim_{\nu \to 1} \gamma_{A,M,\nu} < \gamma_{A,M,\nu} < \gamma_{A,M,\frac{1}{2}}$;
\item the function $R \mapsto \Phi(A, M,\nu, R)$ is an increasing homeomorphism from $\left(0,+\infty\right)$ onto 
$\left(0,\gamma_{A,M,\nu}\right)$;
\item the function $\nu \mapsto \Phi(A, M,\nu, R)$ is a decreasing homeomorphism from $\left[\frac{1}{2},1\right)$ onto 
$\left(0,\Phi(A,M,\frac{1}{2},R)\right]$.
\end{enumerate}
\end{lem}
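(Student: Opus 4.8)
The plan is to obtain existence, uniqueness and the pointwise bounds from the method of sub- and supersolutions together with the maximum principle, and then to read off every quantitative property of $\Phi$ from the first integral of this autonomous equation combined with the comparison principle. For existence, the constants $\nu A$ and $A$ are respectively a subsolution and a supersolution of the boundary value problem (the first one strict) and $\nu A\le A$, so a solution $w=w_{A,M,\nu,R}$ with $\nu A\le w\le A$ exists, and $-w''=M(A-w)w\in\mathscr C^{0}$ together with a bootstrap gives $w\in\mathscr C^{2}$ (in fact $\mathscr C^{\infty}$). Conversely every positive solution already lies in $[\nu A,A]$: at an interior maximum the equation gives $M(A-w)w\ge0$, hence $w\le A$ there, while at an interior minimum it gives $M(A-w)w\le0$, hence $w\ge A$ there — so, with the boundary values, the minimum must be $\nu A$. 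The strong maximum principle then sharpens these bounds to $\nu A<w<A$ on $(-R,R)$. For uniqueness, if $w_{1}\neq w_{2}$ are positive solutions then on each connected component $(a,b)$ of $\{w_{1}>w_{2}\}$, at whose endpoints $w_{1}=w_{2}$, the difference $g=w_{1}-w_{2}$ solves $-g''=Mg\,(A-w_{1}-w_{2})$; since $w_{1}+w_{2}\ge 2\nu A\ge A$ by the hypothesis $\nu\ge\tfrac12$, $g$ is convex on $(a,b)$ yet vanishes at $a,b$ and is positive in between, which is absurd. Finally $x\mapsto w(-x)$ solves the same problem, so evenness follows from uniqueness.

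Next, the same interior-extremum argument forbids an interior local minimum of $w$, so $w$ decreases strictly on $[0,R]$ from $m:=w(0)\in(\nu A,A)$ to $\nu A$; in particular $w'(-R)\ge0$, and $w'(-R)=0$ is impossible since then $w''(-R)=-M(A-\nu A)\nu A<0$ would contradict $w>\nu A$ near $-R$, so $\Phi>0$. Multiplying by $w'$ shows that $\tfrac12(w')^{2}+H(w)$ is constant, with $H(s):=M\big(\tfrac{A}{2}s^{2}-\tfrac13 s^{3}\big)$; evaluating at $0$ and at $-R$ gives
\[
\Phi(A,M,\nu,R)=\sqrt{2\big(H(m)-H(\nu A)\big)},\qquad R=\int_{\nu A}^{m}\frac{\mathrm ds}{\sqrt{2\big(H(m)-H(s)\big)}}.
\]
To get monotonicity in $R$, for $R_{1}<R_{2}$ one restricts $w_{A,M,\nu,R_{2}}$ to $[-R_{1},R_{1}]$ and runs the convexity-of-the-difference argument once more (using that this restriction is $>\nu A$ while $w_{A,M,\nu,R_{1}}=\nu A$ at $\pm R_{1}$), obtaining $w_{A,M,\nu,R_{1}}<w_{A,M,\nu,R_{2}}$ on $(-R_{1},R_{1})$, hence $m$ — and therefore $\Phi$ — is strictly increasing in $R$. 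For $\nu_{1}<\nu_{2}$ the same comparison gives $w_{A,M,\nu_{1},R}\le w_{A,M,\nu_{2},R}$, so $g=w_{A,M,\nu_{2},R}-w_{A,M,\nu_{1},R}\ge0$ is even and, because $w_{1}+w_{2}\ge(\nu_{1}+\nu_{2})A\ge A$, convex on $(-R,R)$; since $g'(0)=0$ and $g'\not\equiv0$ on $[0,R]$ (equality would force $w_{1}+w_{2}\equiv A$ there, which the equations exclude), one gets $g'(R)>0$, i.e.\ $\Phi$ is strictly decreasing in $\nu$.

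It then remains to assemble assertions (1)--(5). Continuity of $(A,M,\nu,R)\mapsto\Phi$ reduces, after rescaling $(-R,R)$ to $(-1,1)$, to continuity of $(A,B,\nu)\mapsto W'_{A,B,\nu}(-1)$ for $-W''=B(A-W)W$ on $(-1,1)$, $W(\pm1)=\nu A$; as such solutions obey $\nu A\le W\le A$ and $|W''|\le BA^{2}$ locally uniformly, Arzel\`a--Ascoli and the uniqueness just proved force $\mathscr C^{1}$-convergence along convergent parameter sequences, hence convergence of the derivative at $-1$ — this is (1). The same rescaling yields $|\Phi|\le MRA^{2}\to0$ as $R\to0^{+}$, which is (2). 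Being increasing and bounded above by $\sqrt{2(H(A)-H(\nu A))}$, the map $R\mapsto\Phi$ has a limit $\gamma_{A,M,\nu}$; since the time-map integral stays finite while $m$ stays away from $A$, sending $R\to\infty$ forces $m\to A$, whence
\[
\gamma_{A,M,\nu}=\sqrt{2\big(H(A)-H(\nu A)\big)}=(1-\nu)\sqrt{\tfrac{MA^{3}}{3}\,(1+2\nu)},
\]
from which its continuity, its monotonicity in $A$, $M$ and $\nu$, and $0=\lim_{\nu\to1}\gamma_{A,M,\nu}<\gamma_{A,M,\nu}<\gamma_{A,M,1/2}$ are all immediate — this is (3). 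Finally a continuous strictly monotone function of one variable with known limits is a homeomorphism onto the corresponding interval: $R\mapsto\Phi$ is an increasing homeomorphism $(0,+\infty)\to(0,\gamma_{A,M,\nu})$ (assertion (4)), and $\nu\mapsto\Phi$ is a decreasing homeomorphism $[\tfrac12,1)\to(0,\Phi(A,M,\tfrac12,R)]$, the right endpoint attained at $\nu=\tfrac12$ and the left limit $0$ coming from $m\in(\nu A,A)\to A$ as $\nu\to1$ (assertion (5)).

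The main obstacle is the monotonicity of $\Phi$ in $\nu$, and, in the same vein, the uniqueness statement: both hinge on the elementary but decisive observation that $\nu\ge\tfrac12$ forces $w_{1}+w_{2}\ge A$ for any two solutions, which turns their difference into a convex (or concave) function with sign-definite boundary data, so that the comparison principle applies with no smallness or self-adjointness hypothesis. Everything else (the sub/supersolution construction, the first integral, the rescaling for continuity, and reading off $\gamma_{A,M,\nu}$) is routine.
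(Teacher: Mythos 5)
Your proof is correct, and it reaches every assertion of the lemma by a genuinely different route from the paper. The paper first nondimensionalizes via $w(x)=A\,W_{\rho,\nu}(\sqrt{AM}\,x)$ with $\rho=\sqrt{AM}\,R$, asserts existence, uniqueness, evenness and the pointwise bounds ``by standard arguments'' (with a cosine subsolution and the constant $1$ supersolution), establishes the monotonicity in $\rho$ by a scaling argument --- defining $\kappa^{\star}=\inf\{\kappa>1:\kappa W_{\rho',\nu}\geq W_{\rho,\nu}\}$ and killing $\kappa^{\star}>1$ with the strong maximum principle --- then gets the monotonicity of the boundary derivative from the Hopf lemma, and computes $\gamma_{A,M,\nu}$ by sending $\rho\to+\infty$, identifying the half-line limit problem via elliptic estimates, and integrating that limit equation against $\overline W'$. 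You instead exploit that the equation is autonomous: the conserved energy $\tfrac12(w')^{2}+H(w)$, $H(s)=M\bigl(\tfrac{A}{2}s^{2}-\tfrac13 s^{3}\bigr)$, gives the closed-form relations $\Phi=\sqrt{2\bigl(H(m)-H(\nu A)\bigr)}$ and $R=\int_{\nu A}^{m}\bigl(2(H(m)-H(s))\bigr)^{-1/2}\,\mathrm{d}s$ for \emph{finite} $R$, so the boundary-derivative monotonicity is just monotonicity of $m$ and the value of $\gamma_{A,M,\nu}$ drops out by letting $m\to A$, with no need to pass to the half-line problem. More importantly, for uniqueness and for all the comparison steps (monotonicity in $R$, monotonicity in $\nu$) you use the single observation that $\nu\geq\tfrac12$ forces $w_{1}+w_{2}\geq A$, so that the difference $g=w_{1}-w_{2}$, which solves $-g''=Mg\,(A-w_{1}-w_{2})$, is convex or concave with sign-definite boundary data and an easy contradiction follows; this replaces the paper's sliding/Hopf machinery, and it makes explicit where the hypothesis $\nu\in[\tfrac12,1)$ is used, something the paper leaves implicit. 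Your proof is also more complete on points the paper compresses: it writes out why every positive solution lies in $[\nu A,A]$, gives the bound $\Phi\leq MRA^{2}$ for assertion (2), and deduces assertion (5) from the time map rather than from ``the same arguments can be adapted.'' The trade-off is that the paper's half-line argument (Du--Lin) exhibits the asymptotic profile $\overline W$ explicitly, which you bypass. Both approaches are sound; yours is more elementary and self-contained, while the paper's is structured for the passage to the limit $\rho\to+\infty$.

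One small presentational remark: when you claim that every positive solution has minimum $\nu A$, the statement ``at an interior minimum $w\geq A$ there --- so the minimum must be $\nu A$'' is a bit terse; the cleaner phrasing is that since $w\in(\nu A,A)$ forces $w''<0$, any interior critical point is a strict local maximum, so the minimum of $w$ over $[-R,R]$ is attained only on the boundary. This is the same argument you then invoke for strict monotonicity of $w$ on $[0,R]$, so it is worth stating once in that form.
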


We point out that the upper limit $\gamma_{A,M,\nu}$ can actually be determined explicitly.

\begin{proof}
We perform the following change of variables
\[
	w(x) = A W_{\rho,\nu} \left(\sqrt{AM} x\right) \qquad \textup{and} \qquad \rho = \sqrt{AM} R.
\]
Here the function $W_{\rho,\nu}$ is a solution to the scaled equation
\begin{equation}\label{eq:scaled_log}
	\begin{cases}
	-W''= \left(1- W\right)W &\textup{in }\left(-\rho,\rho\right)\\
	W\left(\pm \rho\right)=\nu.
	\end{cases}
\end{equation}
We can rephrase all the statements of the result in terms of the dependence of $W_{\rho,\nu}$ on $\rho$ and $\nu$. Here we consider only the dependence on $\rho$. The same arguments can be adapted to show the corresponding results in terms of $\nu$.

For any value of $\rho > 0$ and $\nu \in [\frac{1}{2},1)$, the previous equation admits a unique, positive solution which is even and is such that $\nu < W(x) < 1$ for all $x \in (-\rho, \rho)$. This follows by standard arguments. We just observe that the functions $x \mapsto \nu \cos(\gamma x) / \cos(\gamma \rho)$ are sub-solutions of (\ref{eq:scaled_log}) for $\gamma$ small enough, while the constant $1$ is always a super-solution.

Notice that, for all $\kappa>1$:
\[
-\left(\kappa W_{\rho,\nu} \right)''=\left(1-W_{\rho,\nu}\right) \kappa W_{\rho,\nu} \geq \left(1-\kappa W_{\rho,\nu}\right) \kappa W_{\rho,\nu}\textup{ in }\left(-\rho,\rho\right).
\]
For all $\rho' > \rho > 0$, the following quantity is well-defined:
\[
\kappa^{\star}=\inf\left\{ \kappa>1\ |\ \kappa W_{\rho',\nu}\geq W_{\rho,\nu}\textup{ in }\left(-\rho,\rho\right)\right\} .
\]
 Assuming by contradiction that $\kappa^{\star}>1$ and applying the
strong maximum principle, we get a contradiction. Hence the family
$\left(W_{\rho,\nu}\right)_{\rho>0}$ is non-decreasing, and once
more by the strong maximum principle, it is in fact increasing. 

It follows that the function $\rho \mapsto \max_{[-\rho, \rho]} W_{\rho,\nu}(x)$ is increasing with limit $1$ as $\rho\to+\infty$. 
By classical elliptic estimates (see Gilbarg\textendash Trudinger \cite{Gilbarg_Trudin}) 
the family converges locally uniformly to a bounded and positive solution of (\ref{eq:scaled_log}) 
defined on the whole line $\R$. Hence, as $\rho \to +\infty$, we find that $W_{\rho,\nu} \to 1$ locally in $\mathscr{C}^{2}$.

We now consider the shifted family of functions
\[
 \overline{W}_{\rho,\nu}(x)= W_{\rho,\nu}\left(x-\rho\right) \qquad \textup{for} \quad x \in [0,2\rho].
\]
The family $\rho\mapsto\overline{W}_{\rho,\nu}$ is increasing. In particular, by the Hopf lemma,
\[
	\rho \mapsto \overline{W}_{\rho,\nu}'(0)
\]
is increasing as well. Once again, classical elliptic estimates show that, as $\rho \to +\infty$, the family $\overline{W}_{\rho,\nu}$ converges locally uniformly to the unique solution $\overline{W}$ of
\begin{equation}\label{eq:limit_R_infty_z_A,R}
\begin{cases}
	-\overline{W}''=\left(1-\overline{W}\right) \overline{W} & \textup{in }\left(0,+\infty\right)\\
	\overline{W}\left(0\right)=\nu\\
	\nu < \overline{W} < 1 & \textup{in }\left(0,+\infty\right)
\end{cases}
\end{equation}
(see Du\textendash Lin \cite[Proposition 4.1]{Du_Lin_2010,Du_Lin_2010_er}). Thus, the limit as $\rho \to +\infty$ of 
$\overline{W}_{\rho,\nu}'(-\rho)$ is finite and positive. We can figure out its value by testing (\ref{eq:limit_R_infty_z_A,R}) 
against $\overline{W}'$. This yields the identity
\[
	\lim_{\rho\to+\infty}\overline{W}'_{\rho,\nu}(-\rho) = \sqrt{\frac13 + \nu^2 \left(\frac23 \nu - 1\right)}.
\]
Observe that the limit is always positive and bounded. 

We conclude by observing that the continuity of $\overline{W}'_{\rho,\nu}$ with respect to $\rho$ 
is a classical consequence of the uniqueness of $\overline{W}_{\rho,\nu}$ and of compactness arguments.
\end{proof}

From the previous result we deduce a property which is crucial for our construction. For sake of brevity, from now on we will simply write
\[
	\Phi_1(\nu,L) = \Phi(\alpha,M_1,\nu,r_1 L),
\]
\[
	\Phi_2(\nu,L) = \Phi(d,M_2,\nu,r_2 L),
\]
(recalling that $M_1>0$, $M_2>0$, $r_1>0$ and $r_2>0$ were fixed in the introduction).

We can finally construct the periodic stable solutions of (\ref{eq:normalized_elliptic_equation}) with the piecewise-constant coefficients. 

\begin{prop}
\label{prop:Existence_for_the_scalar_eq_in_L_infty} 
There exists $\underline{L}> 0$ such that,
for any $L > \underline{L}$, (\ref{eq:normalized_elliptic_equation}) with either $\left(\mu_1,\mu_2\right)=\left(\mu_1^L,\mu_2^L\right)$
or with $\left(\mu_1,\mu_2\right)=\left(\mu_1^L+\mu_2^L,\mu_1^L+\mu_2^L\right)$ admits a nonzero sign-changing solution $v \in H_{L\textup{-per}}^{2}$ satisfying, for all $L$-periodic test functions 
$\varphi\in H_{L\textup{-per}}^{1}$,
\[
\int_{0}^{L}v' \varphi'=\int_{0}^{L}\left(\mu_{1}\left(\alpha-v\right)v^{+}-\mu_{2}\left(d+v\right) v^{-}\right)\varphi.
\]

Furthermore, $v$ is linearly stable in the sense of (\ref{eq:parabolic_semilinear_equation}) and (\ref{eq:parabolic_quasilinear_equation}). 
\end{prop}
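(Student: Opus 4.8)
The plan is to build $v$ by hand on a single periodicity cell, gluing logistic profiles on the favourable blocks to affine pieces on the neutral blocks, exploiting the reflection symmetries of $\left(\mu_1^L,\mu_2^L\right)$, and then to read off stability from the sign of the linearised potential $f_1\left[v\right]$. Since $r_0+r_1+r_2=\tfrac12$, the pair $\left(\mu_1^L,\mu_2^L\right)$ is even about the centre of every $\mu_1$-block and about the centre of every $\mu_2$-block, and in one period these centres lie at distance $L/2$; hence a function even about both is automatically $L$-periodic. I would therefore look for $v$ even about $0$ (a $\mu_1$-block centre, where $v>0$) and about $L/2$ (a $\mu_2$-block centre, where $v<0$), with the $\mu_1$-block $\left(-r_1L,r_1L\right)$, the $\mu_2$-block $\left(L/2-r_2L,L/2+r_2L\right)$ and the two neutral blocks of length $r_0L$ in between, so that it suffices to define $v$ on $\left[0,L/2\right]$ and reflect. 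On $\left[0,r_1L\right]$ set $v=w_{\alpha,M_1,\nu_1,r_1L}$; on $\left[L/2-r_2L,L/2\right]$ set $v=-w_{d,M_2,\nu_2,r_2L}\left(\cdot-L/2\right)$; on $\left[r_1L,L/2-r_2L\right]$ let $v$ be affine, issued from $v\left(r_1L\right)=\nu_1\alpha$ with slope $-\Phi_1\left(\nu_1,L\right)$. By Lemma~\ref{lem:Dirichlet_solutions} this forces $v'\left(0\right)=v'\left(L/2\right)=0$ and $v>0$ on the $\mu_1$-block, $v<0$ on the $\mu_2$-block, and the only genuine matching conditions, at $x=L/2-r_2L$, read
\[
\Phi_1\left(\nu_1,L\right)=\Phi_2\left(\nu_2,L\right)=:\phi,\qquad \nu_1\alpha+\nu_2 d=\phi\,r_0L.
\]

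The core of the proof is to solve this two-parameter system simultaneously for all large $L$. Using point~(5) of Lemma~\ref{lem:Dirichlet_solutions} I would invert $\phi\mapsto\nu_i$, writing $\nu_i=\Psi_i\left(\phi,L\right)$ with $\Psi_i\left(\cdot,L\right)$ a continuous decreasing bijection of $\left(0,\Phi_i\left(\tfrac12,L\right)\right]$ onto $\left[\tfrac12,1\right)$, so that the system becomes $g\left(\phi,L\right):=\alpha\Psi_1\left(\phi,L\right)+d\Psi_2\left(\phi,L\right)-\phi\,r_0L=0$ on $\left(0,\phi_{\max}\left(L\right)\right]$, where $\phi_{\max}\left(L\right)=\min\left(\Phi_1\left(\tfrac12,L\right),\Phi_2\left(\tfrac12,L\right)\right)$. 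Then $g\left(\cdot,L\right)$ is continuous and strictly decreasing; as $\phi\to0^+$ one has $\Psi_i\to1$, so $g\to\alpha+d>0$; and by point~(3) of Lemma~\ref{lem:Dirichlet_solutions}, $\Phi_i\left(\tfrac12,L\right)$ converges to $\gamma_{\alpha,M_1,1/2}$, resp. $\gamma_{d,M_2,1/2}$, both positive, so $\phi_{\max}\left(L\right)r_0L\to+\infty$ and $g\left(\phi_{\max}\left(L\right),L\right)\le\alpha+d-\phi_{\max}\left(L\right)r_0L<0$ once $L$ exceeds a threshold $\underline{L}$ depending only on $\alpha,d,M_1,M_2,r_0,r_1,r_2$. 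The intermediate value theorem then gives a (unique) $\phi^\star\in\left(0,\phi_{\max}\left(L\right)\right)$ with $g\left(\phi^\star,L\right)=0$, and $\nu_i:=\Psi_i\left(\phi^\star,L\right)\in\left(\tfrac12,1\right)$. This is where the whole role of ``$L$ large'' sits: the neutral blocks have length $r_0L\to\infty$ while $\phi_{\max}\left(L\right)$ stays bounded away from $0$.

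Next I would check regularity and the twin case. The glued $v$ is $\mathscr{C}^2$ on the interior of each favourable block and affine (hence smooth) on each neutral block, the value/slope matching makes it globally $\mathscr{C}^1$, and $v''\in L^\infty$ has jump discontinuities only at the finitely many block interfaces; hence $v\in H_{L\textup{-per}}^{2}$, it solves (\ref{eq:normalized_elliptic_equation}) a.e. and, integrating by parts against $L$-periodic test functions, in the weak form of the statement, while $v^{-1}\left(\left\{0\right\}\right)$ reduces to the two sign-change points of the strictly monotone affine pieces and is finite. Since the supports of $\mu_1^\star$ and $\mu_2^\star$ are disjoint, $\mu_1^L+\mu_2^L$ coincides with $\mu_1^L$ on the $\mu_1$-block, with $\mu_2^L$ on the $\mu_2$-block, and with $0$ on the neutral blocks --- exactly the block coefficients used above --- so the same $v$ solves the equation with $\left(\mu_1^L+\mu_2^L,\mu_1^L+\mu_2^L\right)$ as well. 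Finally $f_1\left[v\right]=M_1\left(\alpha-2v\right)\mathbf{1}_{v>0}+M_2\left(d+2v\right)\mathbf{1}_{v<0}$, which vanishes on the neutral blocks; on the $\mu_1$-block $v\in\left(\nu_1\alpha,\alpha\right)\subset\left(\alpha/2,\alpha\right)$ because $\nu_1>\tfrac12$, so $\alpha-2v<0$, and symmetrically $d+2v<0$ on the $\mu_2$-block, whence $f_1\left[v\right]\le0$ a.e. and $f_1\left[v\right]\not\equiv0$.

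Stability in both senses then follows from this sign. Testing the periodic principal eigenfunction $\varphi>0$ of the self-adjoint operator $-\tfrac{\mathrm{d}^2}{\mathrm{d}x^2}-f_1\left[v\right]$ against itself gives $\lambda_{1,L\textup{-per}}\int_0^L\varphi^2=\int_0^L\left|\varphi'\right|^2-\int_0^L f_1\left[v\right]\varphi^2>0$, i.e. $v$ is linearly stable in the sense of (\ref{eq:parabolic_semilinear_equation}); dividing the eigenvalue equation for $-\hat\sigma\left(v\right)\tfrac{\mathrm{d}^2}{\mathrm{d}x^2}-\hat\sigma\left(v\right)f_1\left[v\right]$ by the positive bounded weight $\hat\sigma\left(v\right)$ and running the same Rayleigh-quotient computation (now pairing against $\varphi^2/\hat\sigma\left(v\right)$) gives $\lambda_{1,L\textup{-per}}>0$ too, i.e. linear stability in the sense of (\ref{eq:parabolic_quasilinear_equation}). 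The main obstacle is the simultaneous resolution of the two-parameter matching system uniformly in $L$ (second paragraph), together with the somewhat tedious geometric bookkeeping of the first paragraph; once $f_1\left[v\right]\le0$ is in hand, the regularity, weak-formulation and eigenvalue-sign verifications are routine.
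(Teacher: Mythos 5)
Your proof is correct and follows essentially the same path as the paper's: glue logistic Dirichlet profiles from \lemref{Dirichlet_solutions} to affine segments on the neutral blocks, solve the resulting value-and-slope matching conditions by the intermediate value theorem once $L$ is large, and read stability in both senses from the strict negativity on $\operatorname{supp}(\mu_1,\mu_2)$ of the linearized potential $\mu_1\left(\alpha-2v\right)\mathbf{1}_{v>0}+\mu_2\left(d+2v\right)\mathbf{1}_{v<0}$, which holds because $\nu_1,\nu_2>\tfrac12$. The only (harmless) difference is that you parametrize the one-dimensional shooting problem by the common slope $\phi$, inverting $\Phi_i$ via point (5) of \lemref{Dirichlet_solutions}, whereas the paper fixes the boundary value $\nu$ on the $\mu_1$-block, transports it across the neutral block via $\delta(\nu,L)$, and applies the IVT to $\psi(\nu,L)=\Phi_1(\nu,L)-\Phi_2\left(-\delta(\nu,L)/d,L\right)$ on $(\underline{\nu_L},\overline{\nu_L}]$ --- two equivalent reparametrizations of the same monotone matching equation.
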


\begin{figure}\label{fig:mu_i_and_v}
\begin{tikzpicture}
  \begin{axis}[
  	width = 12cm,
    height = 7cm,
    clip=false,
    axis x line = center,
    hide y axis,
    ticks=none,
    axis on top,
    samples     = 50,
    domain      = 0:5,
    xmin = 0, xmax = 5,
    ymin = -3, ymax = 3,
  ]
  
  \addplot[draw = none, fill = red!25] coordinates {(0,0) (1,0) (1,3) (0,3) (0,0)} \closedcycle;
  
  \addplot[draw = none, fill = blue!25] coordinates {(5,0) (4,0) (4,-3) (5,-3) (5,0)} \closedcycle;
  
  \addplot[dashed, black] coordinates {(0,2.85) (1,2.85)};
  \addplot[dashed, black] coordinates {(0,1.425) (1,1.425)};

  \addplot[dashed, black] coordinates {(4,-2.5) (5,-2.5)};
  \addplot[dashed, black] coordinates {(4,-1.25) (5,-1.25)};
  
    \pgfmathsetmacro{\zero}{2.16}
    
    \pgfmathsetmacro{\mone}{-2.93/\zero}
    \pgfmathsetmacro{\alfaone}{\mone / 2 *(1 - 2*\zero)}
    \pgfmathsetmacro{\betaone}{\mone / 2 }

    \addplot[black!30, thick, domain = 0:1]{ \alfaone + \betaone *x*x };
    \addplot[black!30, thick, domain = 1:4]{ \mone*(x-\zero) };   
    \addplot[black!30, thick, domain = 4:5]{ -2.5+0*x };
    
    \pgfmathsetmacro{\zero}{2.83}

    \pgfmathsetmacro{\mone}{-2.93/\zero}
    \pgfmathsetmacro{\alfaone}{\mone / 2 *(1 - 2*\zero)}
    \pgfmathsetmacro{\betaone}{\mone / 2 }

    \addplot[black!30, thick, domain = 0:1]{ \alfaone + \betaone *x*x };
    \addplot[black!30, thick, domain = 1:4]{ \mone*(x-\zero) };   
    \addplot[black!30, thick, domain = 4:5]{ -2.25+(x-5)*(x-5) };
    
     \pgfmathsetmacro{\zero}{2.5}

    \pgfmathsetmacro{\mone}{-2.93/\zero}
    \pgfmathsetmacro{\alfaone}{\mone / 2 *(1 - 2*\zero)}
    \pgfmathsetmacro{\betaone}{\mone / 2 }

    \addplot[black, thick, domain = 0:1]{ \alfaone + \betaone *x*x };
    \addplot[black, thick, domain = 1:4]{ \mone*(x-\zero) };   
    \addplot[black, thick, domain = 4:5]{ -1.76-.6+.6*(x-5)*(x-5) };

    \node[anchor = north west ] at (axis cs: 0,0) {$r_1 L$};
    \node[anchor = south east ] at (axis cs: 5,0) {$r_2 L$};
    \node[anchor = north east ] at (axis cs: 2.3,0) {$r_0 L$};
    
    \node[anchor = east ] at (axis cs: 0,2.85) {$\alpha$};
    \node[anchor = east ] at (axis cs: 0,1.425) {$\frac{\alpha}{2}$};

	\node[anchor = west ] at (axis cs: 5,-1.25) {$-\frac{d}{2}$};   
    \node[anchor = west ] at (axis cs: 5,-2.5) {$-d$};   
    
    \node[anchor = south west ] at (axis cs: 1.5,1.7) {$\nu_1 \alpha$};
    \node[anchor = north east ] at (axis cs: 3.5,-1.9) {$-\nu_2 d$};

\end{axis}
\end{tikzpicture}
\caption{Visual representation of the construction of $v$. In red, areas where $\mu_{1}^L=M_1$. In blue, areas where $\mu_{2}^L=M_2$. In gray, the bounds given by $\underline{\nu_L}$ and $\overline{\nu_L}$. In black, the solution $v$.}
\end{figure}
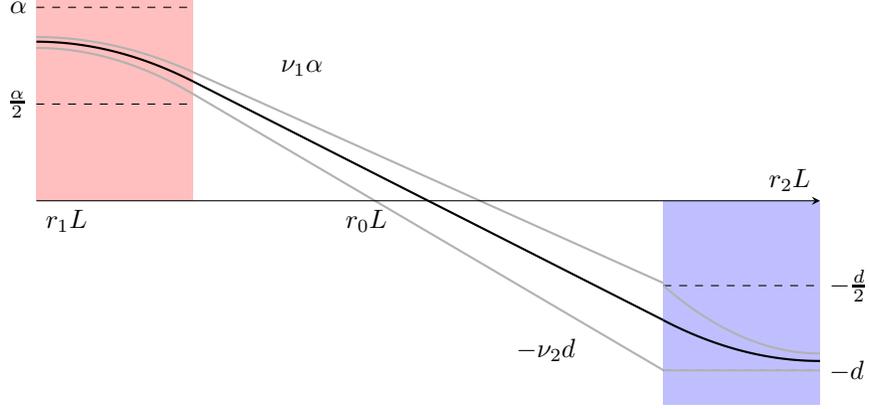

\begin{proof}
Let 
\[
\delta:\left(\nu,L\right)\mapsto -\Phi_1\left(\nu,L\right)r_0 L+\alpha\nu.
\]
The function $\nu\mapsto\delta\left(\nu,L\right)$ is, for all $L>0$, an 
increasing homeomorphism from $\left[\frac{1}{2},1\right)$ onto 
\[
\left[-\Phi_1\left(\frac{1}{2},L\right)r_0 L+\frac{\alpha}{2},\alpha\right).
\]

Since $L\mapsto-\Phi_1\left(\frac{1}{2},L\right)r_0 L$ is decreasing and goes to $-\infty$ as $L\to+\infty$, 
we can define the unique $L_0>0$ satisfying 
\[
-\Phi_1\left(\frac{1}{2},L_0\right)r_0 L_0 +\frac{\alpha}{2}=-d.
\]
Then for all $L>L_0$, we can define the unique $\underline{\nu_L}\in\left(\frac{1}{2},1\right)$ and the unique 
$\overline{\nu_L}\in\left(\underline{\nu_L},1\right)$ satisfying respectively
\[
\delta\left(\underline{\nu_L},L\right)=-d\textup{ and }\delta\left(\overline{\nu_L},L\right)=-\frac{d}{2}.
\]

Now let 
\[
\psi:\left(\nu,L\right)\mapsto\Phi_1\left(\nu,L\right)-\Phi_2\left(-\frac{\delta\left(\nu,L\right)}{d},L\right),
\]
well-defined in $\left(\underline{\nu_L},\overline{\nu_L}\right]$ for all $L>L_0$. For all $L>L_0$, 
$\nu\mapsto\psi\left(\nu,L\right)$ is a decreasing homeomorphism satisfying 
\[
\lim_{\nu\to\underline{\nu_L}} \psi\left(\nu,L\right) = \frac{\alpha\underline{\nu_L}+d}{r_0 L}>0,
\]
\[
\psi\left(\overline{\nu_L},L\right) = \frac{\alpha\overline{\nu_L}+\frac{d}{2}}{r_0 L}-\Phi_2\left(\frac{1}{2},L\right).
\]
Since $L\mapsto \psi\left(\overline{\nu_L},L\right)$ goes to $-\gamma_{d,M_2,\frac{1}{2}}<0$ 
as $L\to+\infty$, we can define $\underline{L}\geq L_0$ such that, for all $L>\underline{L}$, 
\[
\psi\left(\overline{\nu_{L}},L\right)<0
\]
and deduce that for all $L>\underline{L}$, there exists a unique $\nu_L\in\left(\underline{\nu_L},\overline{\nu_L}\right)$
satisfying $\psi\left(\nu_L,L\right)=0$, that is 
\[
\Phi_1\left(\nu_L,L\right)=\Phi_2\left(-\frac{\delta\left(\nu_L,L\right)}{d},L\right).
\]

Next, we fix $L>\underline{L}$ and define $w_1=w_{\alpha,M_1,\nu_L,r_1 L}$, 
$w_2=w_{d,M_2,-d^{-1}\delta\left(\nu_L,L\right),r_2 L}$ 
as well as the nonzero, sign-changing, $L$-periodic function $v$ by
\[
v_{|\left[0,L\right)}(x) = 
\begin{cases}
	w_1\left(x\right) & \textup{if }x\in\left[0,r_1 L\right) \\
    -\Phi_1\left(\nu_L,L\right)\left(x-r_1 L\right)+\nu_L\alpha & \textup{if }x\in\left[r_1 L,r_1 L+r_0 L\right) \\
    w_2\left(x-r_1 L-r_0 L-r_2 L\right) & \textup{if }x\in\left[r_1 L+r_0 L,r_1 L+r_0 L+2r_2 L\right) \\
    \Phi_1\left(\nu_L,L\right)\left(x-L+r_1 L\right)+\nu_L\alpha & \textup{if }x\in\left[r_1 L+r_0 L+2r_2 L,r_1 L+2r_0 L+2r_2 L\right) \\
    w_1\left(x-L\right) & \textup{if }x\in\left[r_1 L+2r_0 L+2r_1 L,L\right)
\end{cases}
\]

Since, by construction, $v$ is a $\mathscr{C}_{L\textup{-per}}^{1,1} \subset H_{L\textup{-per}}^{2}$ juxtaposition of piecewise solutions of (\ref{eq:normalized_elliptic_equation}), we readily deduce that it is a solution of (\ref{eq:normalized_elliptic_equation}).

Regarding the stability of the solution $v$, from \lemref{C1_solution_operator} we evince that the linearized elliptic operator at $v$, denoted $\mathscr{L} \in L \left(  H^{2}_{L\textup{-per}}, L^{2}_{L\textup{-per}}\right)$, is
\[
\mathscr{L}:\eta \mapsto \eta'' +\left[\mu_{1}\left(\alpha-2 v\right)\mathbf{1}_{v>0}+\mu_{2}\left(d+2v\right)\mathbf{1}_{v<0}\right]\eta. 
\]

First we verify the stability in the sense of (\ref{eq:parabolic_semilinear_equation}). Let $\lambda$ be the corresponding 
periodic principal eigenvalue and $\psi\in H_{L\textup{-per}}^{2}$ be the associated
unique periodic positive eigenfunction, normalized in $L^{2}\left(\left(0,L\right)\right)$. From the identity 
\[
	\int_0^L \left(-\mathscr{L}\psi - \lambda \psi \right) \psi = 0
\]
we deduce
\begin{align*}
	\int_{0}^{L}\left(\psi'\right)^{2} & =\int_{0}^{L}\left[\mu_{1}\left(\alpha- 2 v \right)\mathbf{1}_{v>0}+\mu_{2} \left(d + 2 v \right)\mathbf{1}_{v<0}\right]\psi^{2}+\lambda\\
	& =M_1\int_{\{\mu_1 > 0\}\cap\{v>0\}}\left(\alpha-2 v\right)\psi^{2}+M_2\int_{\{\mu_2 > 0\}\cap\{v<0\}}\left(d+2v\right)\psi^{2}+\lambda.
\end{align*}
Since by construction 
\[
v\geq\nu_L\alpha>\frac{\alpha}{2}\textup{ in }\{\mu_1 > 0\}\cap\{v>0\}
\]
and 
\[
v\leq-\left(-\frac{\delta\left(\nu_L,L\right)}{d}\right)d<-\frac{d}{2}\textup{ in }\{\mu_2 > 0\}\cap\{v<0\},
\]
we deduce
\[
\lambda>\int_{0}^{L}\left(\psi'\right)^{2}>0.
\]

Similarly, we verify the stability of $v$ in the sense of (\ref{eq:parabolic_quasilinear_equation}). The same computations as 
before lead us to the desired conclusion.

This conclude the proof of existence and stability of sign-changing solutions for piecewise-constant coefficients
\end{proof}

\begin{rem*}
Going carefully through the proof, using $\overline{\nu_L}<1$ and assuming that $\underline{L}$ is minimal, 
we obtain the estimate $\underline{L}<L^\star$, where $L^\star>0$ is the unique solution of
\[
\Phi_2\left(\frac{1}{2},L^\star\right)L^\star=\frac{1}{r_0}\max\left(\alpha+\frac{d}{2},\frac{\alpha}{2}+d\right).
\]
Hence estimating $\underline{L}$ is only a matter of estimating $L\mapsto\Phi_2\left(\frac{1}{2},L\right)$.
Unfortunately, being unable to find any satisfying estimation of $\Phi_2$, we do not pursue further. 
\end{rem*}

\subsubsection{With regular coefficients}
The function $v$ constructed in \propref{Existence_for_the_scalar_eq_in_L_infty} is linear around $v = 0$. Thus there exists an open neighborhood $O \subset H_{L\textup{-per}}^{2}$ satisfying the assumptions of \lemref{C1_solution_operator}. 

\begin{prop}
\label{prop:In_a_L_infty_neighborhood} Under the assumptions of \propref{Existence_for_the_scalar_eq_in_L_infty}, for any $L > \underline{L}$ there exists an open neighborhood $U \subset \left(L_{L\textup{-per}}^{\infty}\right)^{2}$ of $(\mu_1, \mu_2)$ such that
for all $\left(\rho_{1},\rho_{2}\right)\in U$, (\ref{eq:normalized_elliptic_equation}) with $\left(\rho_{1},\rho_{2}\right)$
admits a sign-changing, $L$-periodic, weak solution. The solution is also linearly stable in the sense of 
(\ref{eq:parabolic_semilinear_equation}) and (\ref{eq:parabolic_quasilinear_equation}). 
\end{prop}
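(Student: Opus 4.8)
The plan is to derive \propref{In_a_L_infty_neighborhood} from the implicit function theorem applied to the map $\mathscr{F}$ of \lemref{C1_solution_operator} at the point $\left(\mu_{1},\mu_{2},v\right)$, where $v$ is the solution of \propref{Existence_for_the_scalar_eq_in_L_infty} and $\left(\mu_{1},\mu_{2}\right)$ is the corresponding pair of coefficients. First I would fix the open set $O$. Since $v$ is affine with nonzero slope $\pm\Phi_{1}\left(\nu_{L},L\right)$ in a neighborhood of each of its zeros, and it has only finitely many zeros per period, the embedding $H_{L\textup{-per}}^{2}\hookrightarrow\mathscr{C}_{L\textup{-per}}^{1}$ guarantees that every $z$ in a sufficiently small $H_{L\textup{-per}}^{2}$-ball $O$ around $v$ is strictly monotone near each of those points, and hence has finitely many, transversal, zeros; in particular $z^{-1}\left(\left\{ 0\right\} \right)$ is Lebesgue-negligible for every $z\in O$, so \lemref{C1_solution_operator} gives $\mathscr{F}\in\mathscr{C}^{1}\left(\left(L_{L\textup{-per}}^{\infty}\right)^{2}\times O,L_{L\textup{-per}}^{2}\right)$. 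Moreover $\mathscr{F}\left(\mu_{1},\mu_{2},v\right)=0$: integrating by parts in the definition of $\mathscr{F}$ and using that $v$ solves (\ref{eq:normalized_elliptic_equation}), one sees that $\mathscr{F}\left(\mu_{1},\mu_{2},v\right)$ pairs to $0$ against every $\varphi\in H_{L\textup{-per}}^{2}$, hence vanishes in $L_{L\textup{-per}}^{2}$ by density.

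The second step is to prove that the partial differential $\mathrm{d}_{z}\mathscr{F}\left[\mu_{1},\mu_{2},v\right]\colon H_{L\textup{-per}}^{2}\to L_{L\textup{-per}}^{2}$ is an isomorphism. By \lemref{C1_solution_operator} it sends $w$ to the $L^{2}$-function $-w''-c\,w$, with $c:=\mu_{1}\left(\alpha-2v\right)\mathbf{1}_{v>0}+\mu_{2}\left(d+2v\right)\mathbf{1}_{v<0}\in L_{L\textup{-per}}^{\infty}$; that is, it equals $-\mathscr{L}$, where $\mathscr{L}$ is the operator of \propref{Existence_for_the_scalar_eq_in_L_infty}. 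This operator is self-adjoint and a compact perturbation of $-\frac{\mathrm{d}^{2}}{\mathrm{d}x^{2}}+\id$, hence Fredholm of index $0$, so it suffices to establish its injectivity. If $w\in H_{L\textup{-per}}^{2}$ satisfied $-w''-cw=0$, then $\int_{0}^{L}\left(w'\right)^{2}-cw^{2}=0$, which by the variational characterization of the periodic principal eigenvalue of this self-adjoint operator would force $\lambda_{1,L\textup{-per}}\left(-\mathscr{L}\right)\leq0$, contradicting the stability of $v$ established in \propref{Existence_for_the_scalar_eq_in_L_infty}; hence $w=0$. The implicit function theorem then yields an open neighborhood $U$ of $\left(\mu_{1},\mu_{2}\right)$ in $\left(L_{L\textup{-per}}^{\infty}\right)^{2}$ and a $\mathscr{C}^{1}$ map $\left(\rho_{1},\rho_{2}\right)\mapsto z_{\rho_{1},\rho_{2}}\in O$ with $z_{\mu_{1},\mu_{2}}=v$ and $\mathscr{F}\left(\rho_{1},\rho_{2},z_{\rho_{1},\rho_{2}}\right)=0$; each $z_{\rho_{1},\rho_{2}}$ is then an $L$-periodic solution of (\ref{eq:normalized_elliptic_equation}) with $\left(\rho_{1},\rho_{2}\right)$ (strong, hence weak), and after shrinking $U$ it is sign-changing, since the map is continuous into $H_{L\textup{-per}}^{2}\hookrightarrow\mathscr{C}_{L\textup{-per}}^{1}$ and $v$ attains strictly positive and strictly negative values.

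It remains to transfer the linear stability. Write $z=z_{\rho_{1},\rho_{2}}$; its two linearized eigenproblems involve the potential $f_{1}\left[z\right]=\rho_{1}\left(\alpha-2z\right)\mathbf{1}_{z>0}+\rho_{2}\left(d+2z\right)\mathbf{1}_{z<0}$ and the weights $\sigma\left(z\right),\hat{\sigma}\left(z\right)$. As $\left(\rho_{1},\rho_{2}\right)\to\left(\mu_{1},\mu_{2}\right)$ one has $z\to v$ in $\mathscr{C}_{L\textup{-per}}^{1}$, and since $v$ is transversal at its finitely many zeros the symmetric differences $\left\{ z>0\right\} \triangle\left\{ v>0\right\}$ and $\left\{ z<0\right\} \triangle\left\{ v<0\right\}$ have Lebesgue measure tending to $0$; combined with $\rho_{i}\to\mu_{i}$ in $L^{\infty}$, this gives $L^{1}$-convergence of $f_{1}\left[z\right]$ and of $\sigma\left(z\right)$ to the corresponding objects at $v$, while they stay bounded in $L^{\infty}$ (and $\sigma\left(z\right)\geq\min\left(1,1/d\right)$ almost everywhere). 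Using the variational formulas
\[
\lambda_{1,L\textup{-per}}\left(-\frac{\mathrm{d}^{2}}{\mathrm{d}x^{2}}-f_{1}\left[z\right]\right)=\min_{\substack{\psi\in H_{L\textup{-per}}^{1}\\\|\psi\|_{L^{2}(0,L)}=1}}\int_{0}^{L}\left(\psi'\right)^{2}-f_{1}\left[z\right]\psi^{2},
\]
\[
\lambda_{1,L\textup{-per}}\left(-\hat{\sigma}\left(z\right)\frac{\mathrm{d}^{2}}{\mathrm{d}x^{2}}-\hat{\sigma}\left(z\right)f_{1}\left[z\right]\right)=\min_{\substack{\psi\in H_{L\textup{-per}}^{1}\\\int_{0}^{L}\sigma\left(z\right)\psi^{2}=1}}\int_{0}^{L}\left(\psi'\right)^{2}-f_{1}\left[z\right]\psi^{2},
\]
together with the one-dimensional embedding $H_{L\textup{-per}}^{1}\hookrightarrow L^{\infty}$ (which bounds $\left|\int_{0}^{L}(c_{n}-c)\psi^{2}\right|\leq\|c_{n}-c\|_{L^{1}}\|\psi\|_{H^{1}}^{2}$ for $L^{1}$-converging and $L^{\infty}$-bounded coefficient sequences, on the bounded subsets of $H^{1}$ on which the relevant near-minimizers range), a routine comparison argument shows that both periodic principal eigenvalues depend continuously on $\left(\rho_{1},\rho_{2}\right)$; since they are positive at $\left(\mu_{1},\mu_{2}\right)$ by \propref{Existence_for_the_scalar_eq_in_L_infty}, a final shrinking of $U$ keeps them positive, so $z_{\rho_{1},\rho_{2}}$ is linearly stable in the sense of (\ref{eq:parabolic_semilinear_equation}) and of (\ref{eq:parabolic_quasilinear_equation}).

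The step I expect to be the main obstacle is this last continuity statement: one must pass to the limit in the zeroth-order coefficient and in the weight $\sigma\left(z\right)$ through the \emph{discontinuous} maps $z\mapsto\mathbf{1}_{z>0}$ and $z\mapsto\mathbf{1}_{z<0}$, which works only because $v$ is affine — hence crosses $0$ transversally — near each of its zeros, the very feature that motivates the explicit construction in \propref{Existence_for_the_scalar_eq_in_L_infty}; one then has to check that the resulting $L^{1}$-convergence, with uniform $L^{\infty}$ bounds, is strong enough to run the variational comparison of periodic principal eigenvalues. The remaining ingredients — verifying the hypotheses of the implicit function theorem, and the Fredholm property of the linearization — are routine given \lemref{C1_solution_operator} and \propref{Existence_for_the_scalar_eq_in_L_infty}.
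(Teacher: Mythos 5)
Your proof is correct and follows the same overall route as the paper: the implicit function theorem applied to $\mathscr{F}$ at $\left(\mu_{1},\mu_{2},v\right)$, with invertibility of $\partial_{z}\mathscr{F}$ deduced from the stability of $v$. The one place where you add genuine content is the final stability-transfer step. The paper disposes of it with the single remark that linear stability is preserved because $\Psi$ is $\mathscr{C}^{1}$, whereas you correctly observe that the potential $f_{1}\left[z\right]$ and the weight $\sigma\left(z\right)$ appearing in the two eigenproblems pass through the \emph{discontinuous} maps $z\mapsto\mathbf{1}_{z>0}$, $z\mapsto\mathbf{1}_{z<0}$, and you justify the continuity of both periodic principal eigenvalues via $L^{1}$-convergence of these coefficients with uniform $L^{\infty}$ bounds, which in turn rests on the transversality of $v$ at its finitely many zeros (the very reason one restricts to the open set $O$). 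This fills a small but real gap in the paper's terse argument, and is the only way to reach the $\hat{\sigma}\left(z\right)$-weighted eigenvalue governing stability in the sense of \eqref{eq:parabolic_quasilinear_equation}, which does not sit directly inside $\mathrm{d}_{z}\mathscr{F}$. Your verification of the implicit function theorem's hypotheses (choice of $O$, Fredholm-plus-injectivity argument for $\partial_{z}\mathscr{F}$) matches what the paper calls ``readily satisfied'' and ``standard regularity results.''
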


\begin{proof}
Let $L > \underline{L}$ and let $(\mu_1, \mu_2, v) \in \left(L_{L\textup{-per}}^{\infty}\right)^{2} \times  H_{L\textup{-per}}^{2}$ be the solution constructed in \propref{Existence_for_the_scalar_eq_in_L_infty}. 

The prerequisites of the implicit function theorem are readily satisfied for the functional $\mathscr{F}$ at $(\mu_1, \mu_2, v)$. 
In particular, since the solution $v$ is linearly stable  in the sense of (\ref{eq:parabolic_semilinear_equation}), the functional 
$\frac{\partial\mathscr{F}}{\partial z}\left[\mu_{1},\mu_{2},v\right]$ is invertible in the following sense: for all 
$f\in  L_{L\textup{-per}}^{2}$, there exists a unique weak solution $z_{f}\in H_{L\textup{-per}}^{2}$ of
\[
	\frac{\partial\mathscr{F}}{\partial v}\left[\mu_{1},\mu_{2},z\right]\left(z_{f}\right) = f.
\]
This follows by standard regularity results.

By virtue of the implicit function theorem, there exists an open neighborhood $U \subset \left(L_{L\textup{-per}}^{\infty}\right)^{2}$ of $\left(\mu_{1},\mu_{2}\right)$, an open neighborhood $V \subset O \subset H_{L\textup{-per}}^{2}$ of $v$ and a $\mathscr{C}^{1}$ diffeomorphism $\Psi:U\to V$ such that, for all $\left(\rho_{1},\rho_{2}\right)\in U$,
\[
\mathscr{F}\left[\rho_{1},\rho_{2},\Psi\left[\rho_{1},\rho_{2}\right]\right]=0.
\]
Finally, since the map $\Psi$ is $\mathscr{C}^{1}$, we find that the linear stability of the solution is preserved in a open neighborhood of $(\mu_1, \mu_2)$.
\end{proof}

\section{The strongly competitive competition\textendash diffusion system}

In the previous section we have considered the equation
\begin{equation}\label{eq:sign_chang}
    -z'' = \frac{\mu_1}{\alpha} \left(\alpha - z\right)z^+ - \frac{\mu_2}{d^2} \left(d + z \right) z^-.
\end{equation}
For this equation and particular choices of $\mu_1$ and $\mu_2$, we have constructed a sign-changing 
solution $v \in \mathscr{C}^{1,1}_{L\textup{-per}}$ for periods $L$ greater than a threshold $\underline{L}$. 
We have also shown that this solution is linearly stable in the sense of (\ref{eq:parabolic_semilinear_equation}) and 
(\ref{eq:parabolic_quasilinear_equation}).

In this section, we aim at using this result to prove the existence of linearly stable solutions of (\ref{eq:elliptic_system}). 
Specifically, fixing $L>\underline{L}$ and a positive $L$-periodic smooth 
function $\omega$, our aim is to prove that for any $k> 0$ large enough there exists a positive and stable solution of (\ref{eq:elliptic_system}) $(u_{1,k},u_{2,k}) \in \mathscr{C}^{1,1}_{L\textup{-per}}$ such that
\[
  (u_{1,k},u_{2,k}) \to \left(\frac{v^+}{\alpha}, \frac{v^-}{d}\right) \qquad \textup{as } k\to+\infty
\]
in $H^1_{L\textup{-per}}$ and $\mathscr{C}^{0,\gamma}_{L\textup{-per}}$ for $\gamma \in (0,\frac{1}{2})$. 

We will show the result in a series of steps: first, we give some \textit{a priori} estimates of the solution of a more general class of systems. Then, by means of topological arguments, we deduce from these estimates the existence of solutions. Finally we establish the uniqueness and the linear stability of the solutions.

\subsection{\textit{A priori} estimate}

We start by showing \textit{a priori} estimates for the solutions of a family of systems that contains 
(\ref{eq:elliptic_system}) as a special case. We are here interested in the $L$-periodic positive solutions of
\begin{equation}\label{eq:homotopy_a_priori}
\begin{cases}
    -u_1'' = t \mu_1( 1 - u_1) u_1 + (1-t) \frac{\mu_1}{\alpha^2} \left(\alpha - (\alpha u_{1} - d u_2)^+\right)(\alpha u_{1} - d u_2)^+ - k \omega u_1 u_2 \\
    -d u_2'' = t \mu_2( 1 - u_2) u_2 + (1-t) \frac{\mu_2}{d^2} \left(d - (\alpha u_{1} - d u_2)^-\right)(\alpha u_{1} - d u_2)^- - \alpha k \omega u_1 u_2
  \end{cases}
\end{equation}
where $k > 0$ and $t \in [0,1]$. Observe that if we take $t = 1$, then (\ref{eq:homotopy_a_priori}) reduces to the original system (\ref{eq:elliptic_system}).

\begin{lem}\label{lem:a_priori_infty}
Let $\eta > 0$. There exists a constant $C > 0$ such that for any $t \in [0,1]$ and $k \geq 1$, if $(u_1, u_2)$ is a nonnegative nonzero solution of (\ref{eq:homotopy_a_priori}) and
\[
	\|(\alpha u_{1} - d u_2) - v\|_{L^\infty} \leq \eta
\]
then
\[
	0 < u_1 < C, \quad 0 < u_2 < C \quad \textup{and} \quad \|(u_1,u_2)\|_{\textup{Lip}} \leq C.
\]
\end{lem}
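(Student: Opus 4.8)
The plan is to establish the two \emph{a priori} bounds first and deduce strict positivity at the end. Throughout, $v$ denotes the fixed function produced by \propref{Existence_for_the_scalar_eq_in_L_infty}, so the hypothesis $\|(\alpha u_{1}-du_{2})-v\|_{L^{\infty}}\le\eta$ gives immediately
\[
\|\alpha u_{1}-du_{2}\|_{L^{\infty}}\le B:=\|v\|_{L^{\infty}}+\eta ,
\]
whence $u_{2}\ge(\alpha u_{1}-B)/d$ and, symmetrically, $u_{1}\ge(du_{2}-B)/\alpha$: a bound on one component forces a bound on the other. To bound $u_{1}$, I would take a maximum point $x_{1}$ of the periodic function $u_{1}$ (recall that in dimension one any $H^{1}_{L\textup{-per}}$ solution of (\ref{eq:homotopy_a_priori}) is automatically of class $\mathscr{C}^{1,1}_{L\textup{-per}}$). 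Applying the maximum principle at $x_{1}$ — in the form valid for $\mathscr{C}^{1,1}$ functions, selecting nearby Lebesgue points of $u_{1}''$ where it is nonpositive, which is the only concession to the $L^{\infty}$ regularity of the right-hand side — and using the elementary inequalities $(1-s)s\le\tfrac14$, $(\alpha-s)s\le\tfrac{\alpha^{2}}4$ together with $\omega\ge\omega_{0}:=\min_{[0,L]}\omega>0$, the first line of (\ref{eq:homotopy_a_priori}) yields
\[
k\,\omega_{0}\,u_{1}(x_{1})\,\bigl(\alpha u_{1}(x_{1})-B\bigr)^{+}/d\le\tfrac14\|\mu_{1}\|_{L^{\infty}} .
\]
Since $k\ge1$, this is a quadratic inequality in $u_{1}(x_{1})$ that is independent of $k$ and $t$, so $u_{1}(x_{1})\le C_{1}(\eta)$; hence $\|u_{1}\|_{L^{\infty}}\le C_{1}$ and then $\|u_{2}\|_{L^{\infty}}\le C_{2}(\eta)$. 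This is the heart of the matter: for $t$ near $0$ the logistic term loses its coercivity, and it is precisely the competition term — which cannot be large at a maximum of $u_{1}$ — together with the constraint (a large $u_{1}$ forces a large $u_{2}$, hence a large product $u_{1}u_{2}$) that keeps $u_{1}$ bounded \emph{uniformly in} $k$.

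Next I would obtain the Lipschitz bound. Integrating the first line of (\ref{eq:homotopy_a_priori}) over one period makes the left-hand side vanish, and bounding the reaction terms in $L^{1}$ by the previous step gives
\[
k\int_{0}^{L}\omega u_{1}u_{2}\le C(\eta),\qquad\text{hence}\qquad k\,\|u_{1}u_{2}\|_{L^{1}(0,L)}\le C(\eta)/\omega_{0},
\]
uniformly in $k\ge1$ and $t\in[0,1]$ — this is the second point where the constraint is essential, since the competition term carries a factor $k$. It follows that $\|u_{1}''\|_{L^{1}(0,L)}\le C(\eta)$; as $u_{1}$ is periodic it has a critical point, so $\|u_{1}'\|_{L^{\infty}}\le\|u_{1}''\|_{L^{1}(0,L)}\le C(\eta)$, and the same for $u_{2}$ (the fixed constant $d$ being harmless). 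Combining with Step~1, $\|(u_{1},u_{2})\|_{\textup{Lip}}\le C(\eta)$.

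Finally, for positivity: by Step~1 the coefficients of (\ref{eq:homotopy_a_priori}) are bounded, and using $w^{+}\le\alpha u_{1}$ to absorb the segregated term into a multiple of $u_{1}$, each equation can be rewritten as $-u_{i}''+c_{i}(x)u_{i}\ge0$ with $c_{i}\in L^{\infty}_{L\textup{-per}}$; the strong maximum principle then forces each $u_{i}$ to be either $\equiv0$ or strictly positive. A trivial component is ruled out by the constraint: if $u_{1}\equiv0$ then $\alpha u_{1}-du_{2}\le0$ everywhere, so $\|(\alpha u_{1}-du_{2})-v\|_{L^{\infty}}\le\eta$ would force $\sup v\le\eta$, which is impossible once $\eta<\nu_{L}\alpha$ (and symmetrically $u_{2}\not\equiv0$ once $\eta$ is below $-\inf v$) — both being fixed thresholds, this covers the regime of interest. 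Hence $u_{1},u_{2}>0$. The genuine obstacle throughout is the uniformity in $k$: both the equation and the naive elliptic $\mathscr{C}^{1,\gamma}$ estimate contain the factor $k$, and the resolution is that the closeness of $\alpha u_{1}-du_{2}$ to $v$ neutralises it, once at a maximum point and once after integration over a period; the remaining technical nuisance, the low regularity of $\mu_{1},\mu_{2},\omega$, is dealt with in the standard way in the maximum-principle step.
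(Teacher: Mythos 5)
Your proposal is correct and follows essentially the same route as the paper: bound the reaction terms, use the $L^\infty$-closeness of $\alpha u_1-du_2$ to $v$ to lower-bound $u_2$ in terms of $u_1$ so that the maximum-principle inequality at a maximum of $u_1$ becomes quadratic and $k$-independent, then integrate over a period to bound $k\int\omega u_1u_2$ and hence $\|u_i''\|_{L^1}$, and finally invoke the strong maximum principle for positivity. The one place you go beyond the paper's exposition is in explicitly ruling out a trivial component via the constraint (requiring $\eta$ below $\nu_L\alpha$ and $-\inf v$), a point the paper leaves implicit; this makes your write-up more complete, but the underlying method is identical.
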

\begin{proof}
We start by showing that nonnegative nonzero solutions are necessarily strictly positive. Indeed, assuming that $u_2 \geq 0$, we have that $0$ is a solution of the equation in $u_1$, since in this case $(\alpha 0 - d u_2)^+ = 0$. We thus conclude by the comparison principle that $u_1 > 0$. 

In order to show the upper uniform bound, we first observe that by assumption
\[
	du_2 \geq \alpha u_1 + v - \eta
\]
and that, moreover, there exists a constant $C' > 0$ such that
\[
	t \mu_1( 1 - u_1) u_1 + (1-t) \frac{\mu_1}{\alpha^2} \left(\alpha - (\alpha u_{1} - d u_2)^+\right)(\alpha u_{1} - d u_2)^+ \leq C'.
\]
As a result, any $u_1$ positive solution of (\ref{eq:homotopy_a_priori}) satisfies the differential inequality
\[
	-u_1'' \leq C' - \frac{k}{d} \omega u_1 (\alpha u_1 + v - \eta).
\]
It follows that any maximum $M>0$ of $u_1$ satisfies 
\[
    \frac{k}{d} \omega M (\alpha M + v - \eta) \leq C',
\]
whence $u_1$ is bounded by some constant $C > 0$. We can conclude similarly for the component $u_2$.

To prove the uniform Lipschitz estimate, we integrate the equation in $u_1$ on the interval $[0,L]$. Exploiting the $L$-periodicity of $u_1$, we find
\begin{equation}\label{eq:bound_compet}
	k \int_0^L \omega u_1 u_2 = \int_0^L t \mu_1( 1 - u_1) u_1 + \int_0^L (1-t) \frac{\mu_1}{\alpha^2} \left(\alpha - (\alpha u_{1} - d u_2)^+\right)(\alpha u_{1} - d u_2)^+.
\end{equation}
Once again, the right hand side is bounded by $C' L$ for any $t \in [0,1]$ and $k \geq 1$. Since $u_1$ is periodic and smooth ($\mathscr{C}^{1,1}$)  for $k$ bounded, there exists $x_0 \in [0,L]$ such that $u_1'(x_0)= 0$. Integrating the equation in $u_1$ on the interval $[x_0, x]$ We find that
\[
    u_1'(x) = - \int_{x_0}^{x} \left[ t\mu_1( 1 - u_1) u_1 + (1-t) \frac{\mu_1}{\alpha^2} \left(\alpha - (\alpha u_{1} - d u_2)^+\right)(\alpha u_{1} - d u_2)^+ \right] + k \int_{x_0}^{x} \omega u_1 u_2
\]
which yields, together with (\ref{eq:bound_compet}), the estimate for any $x \in [0,L]$
\[
	|u_1'(x)| \leq 2 \int_{0}^{L} t \mu_1| 1 - u_1| u_1 + 2 \int_{0}^{L} (1-t) \frac{\mu_1}{\alpha^2} \left|\alpha - (\alpha u_{1} - d u_2)^+\right|(\alpha u_{1} - d u_2)^+.
\]
We conclude that the component $u_1$ is bounded in the Lipschitz norm uniformly in $t \in [0,1]$ and $k \geq 1$. We can proceed in a similar way for the component $u_2$. 
\end{proof}

\begin{lem}\label{lem:a_priori}
Let $\eta > 0$ be sufficiently small. For any $\eps > 0$ there exists $\bar k \geq 1$ such that any nonnegative solution $(u_1, u_2)$ of (\ref{eq:homotopy_a_priori}) with $k \geq \bar k$ such that
\[
	\|(\alpha u_{1} - d u_2) - v\|_{L^\infty} \leq \eta
\]
satisfies
\[
  \left\| (u_{1}, u_{2}) - \left(\frac{v^+}{\alpha}, \frac{v^-}{d}\right)\right\|_{H^1_{L\textup{-per}} \cap \mathscr{C}^{0,\frac12}}  + \left\| \alpha u_{1} - d u_{2} - v \right\|_{\mathscr{C}^{1,\frac12}} \leq \eps.
\]
\end{lem}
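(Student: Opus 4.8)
The plan is to combine the $L^\infty$ and Lipschitz bounds from Lemma~\ref{lem:a_priori_infty} with a compactness argument, and to identify the limit by exploiting the singular structure of the competition term as $k\to+\infty$. I would argue by contradiction: suppose the conclusion fails, so there are sequences $k_n\to+\infty$, $t_n\in[0,1]$ and solutions $(u_1^n,u_2^n)$ of \eqref{eq:homotopy_a_priori} with $\|(\alpha u_1^n-d u_2^n)-v\|_{L^\infty}\le\eta$ but for which the displayed estimate is violated by some fixed $\eps>0$.

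First, Lemma~\ref{lem:a_priori_infty} gives a uniform $L^\infty$ and Lipschitz (hence $\mathscr{C}^{0,\frac12}$ and $H^1$) bound on $(u_1^n,u_2^n)$; passing to a subsequence, $u_i^n\to u_i^\infty$ strongly in $\mathscr{C}^{0,\gamma}_{L\textup{-per}}$ for every $\gamma\in(0,\frac12)$ and weakly in $H^1_{L\textup{-per}}$, with the limit nonnegative and periodic. The assumption $\|(\alpha u_1^n-d u_2^n)-v\|_{L^\infty}\le\eta$ passes to the limit, so $\|(\alpha u_1^\infty-d u_2^\infty)-v\|_{L^\infty}\le\eta$. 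The key point is that the competition term forces segregation in the limit. Indeed, testing the first equation of \eqref{eq:homotopy_a_priori} against a fixed test function and using that the reaction terms are uniformly bounded (the bound $C'$ from the previous lemma) while $k_n\to+\infty$, one finds $\int_0^L \omega u_1^n u_2^n\to 0$; since $\omega$ is bounded below, $u_1^n u_2^n\to 0$ in $L^1$, and by the uniform convergence $u_1^\infty u_2^\infty\equiv 0$. Thus on $\{u_1^\infty>0\}$ we have $u_2^\infty=0$, whence $\alpha u_1^\infty-d u_2^\infty=\alpha u_1^\infty>0$ there, and symmetrically; combined with the $\eta$-closeness to $v$ (for $\eta$ small, so that the sign of $v$ is locally determined except near its finitely many, nondegenerate zeros), this pins down $\{u_1^\infty>0\}=\{v>0\}$ up to null sets, giving $\alpha u_1^\infty = v^+$ and $d u_2^\infty = v^-$ a.e., i.e. $u_i^\infty = \left(\frac{v^+}{\alpha},\frac{v^-}{d}\right)$.

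Next I would upgrade the convergence. Set $z_n := \alpha u_1^n - d u_2^n$. Subtracting $\alpha$ times the first equation from the second (after dividing appropriately), the competition terms cancel and $z_n$ solves a uniformly elliptic equation $-z_n'' = g_n$ with $g_n$ bounded in $L^\infty$ uniformly in $n$ (it is an explicit combination of the bounded reaction terms involving $t_n$, $\mu_1$, $\mu_2$ and $u_i^n$); by elliptic regularity $z_n$ is bounded in $\mathscr{C}^{1,\frac12}_{L\textup{-per}}$, so along a further subsequence $z_n\to z_\infty$ in $\mathscr{C}^{1,\gamma}_{L\textup{-per}}$ for $\gamma<\frac12$, and one checks $z_\infty$ solves \eqref{eq:normalized_elliptic_equation} (equivalently \eqref{eq:sign_chang}) and is $\eta$-close to $v$; combined with $z_\infty = \alpha u_1^\infty - d u_2^\infty = v^+ + v^- = v$ this gives $z_n\to v$ in $\mathscr{C}^{1,\gamma}_{L\textup{-per}}$. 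To get the full $\mathscr{C}^{1,\frac12}$ convergence one interpolates the $\mathscr{C}^{1,\frac12}$ bound with the $\mathscr{C}^{1,0}$ convergence (or re-runs the Schauder estimate on the difference $z_n - v$, whose right-hand side tends to $0$ in $L^\infty$ on every compact set away from the zeros of $v$ and is uniformly bounded, hence tends to $0$ in $L^p$ for all $p$). Finally, for the $H^1$ convergence of $(u_1^n,u_2^n)$ one uses the equation: multiplying the first equation of \eqref{eq:homotopy_a_priori} by $(u_1^n - \frac{v^+}{\alpha})$, integrating by parts, and using the strong $\mathscr{C}^0$ convergence together with $\int\omega u_1^n u_2^n\to 0$ shows $\int (u_1^n)'{}^2 \to \int \left(\frac{v^+}{\alpha}\right)'{}^2$, which combined with weak convergence upgrades it to strong $H^1$; likewise for $u_2^n$. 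This contradicts the standing assumption that the estimate fails, proving the lemma.

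The main obstacle is the identification $\{u_1^\infty>0\} = \{v>0\}$ up to null sets, which is where the smallness of $\eta$ is essential. The delicate issue is the behaviour near the (finitely many) zeros of $v$: there the sign of $v$ is not determined by $\eta$-closeness alone, and one must use that $v$ is exactly linear near its zeros (from the construction in Proposition~\ref{prop:Existence_for_the_scalar_eq_in_L_infty}), so that $\{|v|\le\eta\}$ consists of finitely many short intervals of length $O(\eta)$, on which the contribution to all the relevant norms is $O(\eta)$ — hence can be absorbed by first choosing $\eta$ small, then $\bar k$ large. A secondary technical point is ensuring the constant $C$ in Lemma~\ref{lem:a_priori_infty} and the reaction bound $C'$ are genuinely uniform in $t\in[0,1]$, but that is already built into the previous lemma. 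Everything else is a routine compactness-and-elliptic-regularity bookkeeping.
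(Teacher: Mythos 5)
Your overall strategy matches the paper's: uniform $L^\infty$/Lipschitz bounds from Lemma~\ref{lem:a_priori_infty}, Ascoli--Arzel\`{a} compactness, segregation in the limit via the integrated competition term, passing to the limit in the equation for $z_n=\alpha u_1^n-d u_2^n$, and upgrading the $H^1$ convergence by testing against the deviation from the limit. Those ingredients are right and essentially identical to the paper's.

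However, there is a genuine gap at the identification step. You attempt to deduce $\alpha u_1^\infty = v^+$ and $d u_2^\infty = v^-$ from segregation plus the $\eta$-closeness $\|(\alpha u_1^\infty - d u_2^\infty)-v\|_{L^\infty}\le\eta$, via a set-matching argument ($\{u_1^\infty>0\}=\{v>0\}$ ``up to null sets''). That inference does not work: even granting the set matching, on the set $\{v>0\}$ you only know $|\alpha u_1^\infty - v^+|\le\eta$, not equality — and the set matching itself fails on $\{|v|\le\eta\}$. Since the statement requires the conclusion to hold for \emph{every} $\eps>0$ with $\eta$ fixed once and for all (small), an $O(\eta)$-closeness of the limit is not enough; you must prove exact equality $z_\infty = v$. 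The correct argument, and the one the paper uses, is: establish the segregation identities $(\alpha\bar u_1 - d\bar u_2)^+ = \alpha\bar u_1$, $(\alpha\bar u_1 - d\bar u_2)^- = d\bar u_2$ from $\bar u_1\bar u_2\equiv0$; pass to the limit in the equation for $z_n$, noting that thanks to these identities the $t$ and $(1-t)$ reaction terms coincide in the limit so that $z_\infty$ solves (\ref{eq:sign_chang}); then invoke that $v$ is an \emph{isolated} solution of (\ref{eq:sign_chang}) (a consequence of its linear stability proved in Proposition~\ref{prop:Existence_for_the_scalar_eq_in_L_infty}), so that $\eta$-closeness plus solving the same equation forces $z_\infty = v$. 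Your second paragraph already contains ``one checks $z_\infty$ solves (\ref{eq:normalized_elliptic_equation}) and is $\eta$-close to $v$'' — the missing ingredient is precisely the isolatedness, which you never invoke; instead you write ``combined with $z_\infty = \alpha u_1^\infty - d u_2^\infty = v^+ + v^- = v$'', which circularly relies on the unjustified identification from the first paragraph. (Also note the typo: $v^+ + v^- = |v|$, not $v$; the intended identity is $v^+ - v^- = v$.) Once the isolatedness is used, the remaining closing steps you describe (strong $H^1$, $\mathscr{C}^{1,\frac12}$ for $z_n$) are fine and consistent with the paper.
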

\begin{proof}
By the uniform Lipschitz estimate of \lemref{a_priori_infty} and the Ascoli--Arzela theorem, we find that the set of solutions in the statement is compact in the $\mathscr{C}^{0,\gamma}$ topology for any $\gamma \in [0,1)$ and limit points are Lipschitz continuous. Let $(\bar u_1, \bar u_2) \in \textup{Lip}_{L\textup{-per}}$ be the limit of a converging sequence of solutions $((u_{1,k}, u_{2,k}))_k$ as $k \to +\infty$. Integrating the equation in $u_{1,k}$ over $[0,L]$ and taking the limit $k \to +\infty$ (see also the identity in (\ref{eq:bound_compet})), we find that $\bar u_1 \bar u_2 = 0$ must be satisfied. In particular, it follows that
\begin{equation}\label{eq:limit_segr}
	(\alpha \bar u_1 - d \bar u_2)^+ = \alpha \bar u_1 \quad \textup{and} \quad (\alpha \bar u_1 - d \bar u_2)^- = d \bar u_2.
\end{equation}
Moreover, since the function $v$ changes sign in $[0,L]$, by taking $\eta > 0$ small enough, we find that $\bar u_1$ and $\bar u_2$ cannot be identically zero. Testing the equation in $u_{1,k}$ by $u_{1,k}$ itself, we find
\begin{multline*}
    \int_0^L (u_{1,k}')^2 + k \omega u_{1,k}^2 u_{2,k}  =  \int_0^L t \mu_1( 1 - u_{1,k}) u_{1,k}^2 \\
    + \int_0^L (1-t) \frac{\mu_1}{\alpha^2} \left(\alpha - (\alpha u_{1,k} - d u_{2,k})^+\right)(\alpha u_{1,k} - d u_{2,k})^+ u_{1,k}
\end{multline*}
from which we obtain that the sequence $(u_{1,k})_k$ is bounded in $H^1_{L\textup{-per}}$. By the compact embedding of $H^1_{L\textup{-per}}$ in $L^2_{L\textup{-per}}$ we also obtain that $(u_{1,k})_k$ converges to $\bar u_1$ weakly in $H^1_{L\textup{-per}}$. Testing now the equation by $u_{1,k}- \bar u_1$ and using (\ref{eq:bound_compet}) to bound the coupling term as in the proof of \lemref{a_priori_infty}, we obtain
\[
    \begin{split}
	\int_0^L \left[\left(u_{1,k}-\bar u_1\right)'\right]^2  \leq &- \int_0^L \bar u_1' \left(u_{1,k}-\bar u_1\right)' + 2\sup_{[0,L]}\left|u_{1,k}-\bar u_1\right| \left[ \int_{0}^{L} t \mu_1| 1 - u_{1,k}| u_{1,k} \right. \\ 
	&\left. + \int_{0}^{L} (1-t) \frac{\mu_1}{\alpha^2} \left|\alpha - (\alpha u_{1,k} - d u_{2,k})^+\right|(\alpha u_{1,k} - d u_{2,k})^+ \right].
    \end{split}
\]
As a result, the sequence $(u_{1,k})_k$ converges to $\bar u_1$ also strongly in $H^1_{L\textup{-per}}$. Similar conclusions hold for the sequence $(u_{2,k})_k$.

We now consider the equation verified by $\alpha u_{1,k} - d u_{2,k}$. We find
\begin{multline*}
    -(\alpha u_{1,k} - d u_{2,k})'' = \alpha t \mu_1( 1 - u_{1,k}) u_{1,k} - t \mu_2( 1 - u_{2,k}) u_{2,k} \\
    + (1-t) \frac{\mu_1}{\alpha} \left(\alpha - (\alpha u_{1,k} - d u_{2,k})^+\right)(\alpha u_{1,k} - d u_{2,k})^+ \\
    - (1-t) \frac{\mu_2}{d^2} \left(d - (\alpha u_{1,k} - d u_{2,k})^-\right)(\alpha u_{1,k} - d u_{2,k})^-.
\end{multline*}
Passing to the limit in the equation and exploiting (\ref{eq:limit_segr}), we obtain
\begin{multline*}
    -(\alpha \bar u_1 - d \bar u_2)'' = \frac{\mu_1}{\alpha} \left(\alpha - (\alpha \bar u_{1} - d \bar u_{2})^+\right)(\alpha \bar u_{1} - d \bar u_{2})^+ \\
    - \frac{\mu_2}{d^2} \left(d - (\alpha \bar u_{1} - d \bar u_{2})^-\right)(\alpha \bar u_{1} - d \bar u_{2})^-
\end{multline*}
That is, the function $\alpha \bar u_1 - d \bar u_2$ is a solution of (\ref{eq:sign_chang}) that is $\eta$-close, in $L^\infty$ topology, to the solution $v$. Since $v$ is an isolated solution of the equation, by taking $\eta$ sufficiently small we find that necessarily $\alpha \bar u_1 - d \bar u_2 = v$. As this is true for any sequence of converging solutions $((u_{1,k},u_{2,k}))_k$, we find the sought conclusion.
\end{proof}

An interesting consequence of the previous result is that the solutions of (\ref{eq:homotopy_a_priori}), when $\eta$ is small and $k$ is large , are close to the segregated state $\left(\frac{v^+}{\alpha}, \frac{v^-}{d}\right)$, independently of the value of $t \in [0,1]$. More precisely, we have the following corollary.

\begin{cor}\label{cor:closedness}
There exists $\eta_1 > 0$ such that for any $\eps > 0$, $t \in [0,1]$ and $k \geq \bar k(\eps) > 0$, if $(u_1,u_2) \in \mathscr{C}^{1,1}_{L\textup{-per}}$ is a solution of (\ref{eq:homotopy_a_priori}) such that
\[
	\|(\alpha u_{1} - d u_2) - v\|_{L^\infty} < \eta_1
\]
then
\[
	\left\|(u_1,u_2) - \left(\frac{v^+}{\alpha}, \frac{v^-}{d}\right) \right\|_{H^1_{L-\textup{per}} \cap \mathscr{C}^{0,\frac12}} \leq \eps.
\]
\end{cor}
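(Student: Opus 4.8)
The plan is to derive this directly from \lemref{a_priori}: the corollary is essentially a reformulation of that lemma, keeping only the $H^1_{L\textup{-per}}\cap\mathscr{C}^{0,\frac12}$ part of the estimate. The one point that genuinely must be checked is that the ``sufficiently small'' $\eta$ furnished by \lemref{a_priori} can be fixed \emph{before} $\eps$, i.e. that it does not depend on $\eps$ --- otherwise the order of quantifiers in the corollary (``there exists $\eta_1$ such that for all $\eps$\dots'') could not be honoured. Inspecting the proof of \lemref{a_priori}, the smallness of $\eta$ is invoked only (i) to guarantee, using that $v$ changes sign on $[0,L]$, that the limit $(\bar u_1,\bar u_2)$ of a converging sequence of solutions has $\bar u_1,\bar u_2\not\equiv 0$, and (ii) to force $\alpha\bar u_1-d\bar u_2=v$ via the fact that $v$ is an isolated solution of \eqref{sign_chang}; neither condition refers to $\eps$. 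One may moreover shrink $\eta_1$ below $\|v\|_{L^\infty}$, which automatically excludes the constant solution $(0,0)$ (and, more generally, any solution not close to the segregated state) from the scope of the hypothesis.

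With such an $\eta_1$ fixed, let $\eps>0$ be given and set $\bar k(\eps):=\max\{1,\bar k\}$, where $\bar k\geq 1$ is the constant furnished by \lemref{a_priori} for this value of $\eps$. Let $(u_1,u_2)\in\mathscr{C}^{1,1}_{L\textup{-per}}$ be a nonnegative solution of \eqref{homotopy_a_priori} with $t\in[0,1]$, $k\geq\bar k(\eps)$ and $\|(\alpha u_1-d u_2)-v\|_{L^\infty}<\eta_1$. Then in particular $\|(\alpha u_1-d u_2)-v\|_{L^\infty}\leq\eta_1$, so all hypotheses of \lemref{a_priori} are met and
\[
\left\| (u_{1}, u_{2}) - \left(\frac{v^+}{\alpha}, \frac{v^-}{d}\right)\right\|_{H^1_{L\textup{-per}} \cap \mathscr{C}^{0,\frac12}} + \left\| \alpha u_{1} - d u_{2} - v \right\|_{\mathscr{C}^{1,\frac12}} \leq \eps.
\]
Discarding the nonnegative second summand on the left-hand side yields exactly the asserted inequality, uniformly in $t\in[0,1]$.

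In short, there is no substantial obstacle here: the analytic work has already been carried out in \lemref{a_priori_infty} (uniform $L^\infty$ and Lipschitz bounds) and \lemref{a_priori} (the $k\to+\infty$ limit towards the segregated state $(v^+/\alpha,v^-/d)$). The only thing worth double-checking --- and arguably the reason the corollary is stated separately --- is precisely the $\eps$-uniformity of the threshold $\eta_1$ discussed in the first paragraph, which is what allows one to quote the estimate with a single $\eta_1$ valid for every $\eps>0$ and every $t\in[0,1]$.
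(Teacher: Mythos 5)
Your proposal is correct and matches the paper's intent: the paper states \corref{closedness} as an immediate consequence of \lemref{a_priori} with no further argument, and you carry this out faithfully by fixing $\eta_1$, invoking the lemma for each $\eps$, and discarding the $\mathscr{C}^{1,\frac12}$ summand. One small remark: the $\eps$-uniformity of $\eta_1$ that you verify by going back into the proof of \lemref{a_priori} is in fact already guaranteed by the \emph{statement} of that lemma, whose quantifiers read ``Let $\eta>0$ be sufficiently small. For any $\eps>0$ there exists $\bar k\geq 1$\dots'' --- so $\eta$ is fixed before $\eps$, and your inspection of the proof, while a sensible sanity check, is not strictly needed.
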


\subsection{Existence of solutions}

We now show the existence of solution of (\ref{eq:elliptic_system}) when $k$ is large. We will prove this result in two steps, first proving the existence of solutions of the auxiliary problem when $t=0$, and then, making use of a homotopy argument, we will transfer this result to the original problem. Our argument is inspired by the method proposed in \cite{Dancer_Du_1994} to prove the existence of solutions of a related problem.

\begin{lem}\label{lem:decoupled}
There exists $\eta_2>0$ such that, for any $k > 0$, there exists a unique positive solution $(u_1,u_2) \in \mathscr{C}^{1,1}_{L\text{-per}}$ of 
\begin{equation}\label{eq:homotopy_at_0}
\begin{cases}
    -u_1'' = \frac{\mu_1}{\alpha^2} \left(\alpha - (\alpha u_{1} - d u_2)^+\right)(\alpha u_{1} - d u_2)^+ - k \omega u_1 u_2 \\
    -d u_2'' = \frac{\mu_2}{d^2} \left(d - (\alpha u_{1} - d u_2)^-\right)(\alpha u_{1} - d u_2)^- - \alpha k \omega u_1 u_2
  \end{cases}
\end{equation}
satisfying  
\[
\|\alpha u_1 - d u_2 - v\|<\eta_2. 
\]

This solution is $L$-periodic and linearly stable.
\end{lem}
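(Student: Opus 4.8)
The plan is to observe first that, when $t=0$, the system \eqref{eq:homotopy_at_0} is essentially decoupled through the substitution $z=\alpha u_1-d u_2$, which is exactly the quantity governed by the segregated equation. The key point is that the right-hand sides of the two equations, once we use $(\alpha u_1-d u_2)^+=(z)^+$ and $(\alpha u_1-d u_2)^-=(z)^-$, depend on $(u_1,u_2)$ only through $z$ and through the product $u_1 u_2$; since $u_1 u_2$ appears multiplied by the large constant $k$, the natural strategy is a fixed-point/continuity argument anchored at the segregated limit $\left(\frac{v^+}{\alpha},\frac{v^-}{d}\right)$.

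Concretely, I would proceed as follows. First, fix $\eta_2\le\eta_1$ small enough that Corollary \ref{cor:closedness} applies (with $t=0$), so that any solution in the prescribed $L^\infty$-neighbourhood of $v$ is automatically $H^1_{L\textup{-per}}\cap\mathscr{C}^{0,1/2}$-close to the segregated state, with closeness controlled by $\bar k(\eps)$. Next, to produce such a solution for each fixed large $k$, I would set up a Schauder fixed-point argument in the convex closed set
\[
K=\left\{(u_1,u_2)\in\mathscr{C}^{0,1/2}_{L\textup{-per}}\ :\ u_1,u_2\ge0,\ \|\alpha u_1-d u_2-v\|_{L^\infty}\le\eta_2\right\},
\]
defining a map $T$ that sends $(u_1,u_2)$ to the (unique, positive, by the comparison principle) solution of the two \emph{linear} elliptic problems obtained by freezing the coupling coefficients $k\omega u_2$ and $\alpha k\omega u_1$ and the reaction coefficients at the given $(u_1,u_2)$. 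Compactness of $T$ follows from the uniform Lipschitz bounds of Lemma \ref{lem:a_priori_infty} together with Ascoli--Arzelà; self-mappingness $T(K)\subset K$ is the delicate point and is where Lemma \ref{lem:a_priori} and Corollary \ref{cor:closedness} do the real work, since they force $\alpha u_1-d u_2$ to stay genuinely near $v$ for $k$ large. Then $T$ has a fixed point, which is the desired $L$-periodic solution.

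For uniqueness, I would argue that if $(u_1,u_2)$ and $(\tilde u_1,\tilde u_2)$ are two solutions in the $\eta_2$-neighbourhood, then both $z=\alpha u_1-d u_2$ and $\tilde z=\alpha\tilde u_1-d\tilde u_2$ solve the segregated equation \eqref{eq:sign_chang} within $\eta_2$ of $v$; since $v$ is isolated (as used already in the proof of Lemma \ref{lem:a_priori}) and, by Proposition \ref{prop:In_a_L_infty_neighborhood}, nondegenerate, we get $z=\tilde z=v$, and then the two equations for $u_1$ become the \emph{same} linear equation $-u_1''+\tfrac{k}{d}\omega(\alpha u_1+v)u_1=\tfrac{\mu_1}{\alpha^2}(\alpha-v^+)v^+$ (using $du_2=\alpha u_1-v$), whose positive solution is unique by the maximum principle; similarly for $u_2$. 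Linear stability, finally, should follow because the linearized operator of \eqref{eq:homotopy_at_0} at this solution is, after the same change of variables, a small perturbation (for $k$ large, in the resolvent sense) of the block-triangular operator whose diagonal blocks are the linearization of the segregated equation at $v$ (which has positive periodic principal eigenvalue by Proposition \ref{prop:Existence_for_the_scalar_eq_in_L_infty}) and a large positive multiplication operator coming from the $k\omega u_1 u_2$ terms; a perturbation/continuity argument for the periodic principal eigenvalue then gives positivity. The main obstacle I anticipate is the self-mapping step $T(K)\subset K$: one must quantitatively propagate the $L^\infty$-closeness of $\alpha u_1-d u_2$ to $v$ through the linear solve, and this requires combining the a priori estimates with the isolatedness/nondegeneracy of $v$ carefully enough that the neighbourhood is preserved rather than merely not-too-badly enlarged.
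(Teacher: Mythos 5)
Your uniqueness argument already contains the key observation that makes this lemma work: at $t=0$, taking $\alpha$ times the first equation minus the second cancels the coupling terms $k\omega u_1 u_2$ exactly, so $z=\alpha u_1-du_2$ solves the segregated equation (\ref{eq:sign_chang}) \emph{identically}, not merely approximately, and isolatedness of $v$ forces $z=v$. But you do not exploit this for existence, and your proposed Schauder fixed-point argument has two real problems. First, the self-mapping step $T(K)\subset K$ that you flag as the ``delicate point'' is not resolved, and it is not at all clear it can be: freezing the coefficients destroys the structure, and there is no quantitative reason the linear solve keeps $\alpha T_1 - d T_2$ within $\eta_2$ of $v$. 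Second, and more decisively, your construction leans on \lemref{a_priori} and \corref{closedness}, both of which hold only for $k\geq\bar k$ large, whereas the lemma claims existence for \emph{every} $k>0$. A Schauder argument that needs $k$ large simply does not prove the stated result.

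The paper's route avoids the fixed point entirely. Once you know that any admissible solution has $\alpha u_1-du_2=v$, a short computation shows the two equations of (\ref{eq:homotopy_at_0}) become equivalent on that constraint set, so the whole problem reduces to the single scalar forced logistic equation
\[
-u'' = \frac{\mu_1}{\alpha^2}\left(\alpha - v^+\right)v^+ + \frac{k\omega}{d}\,u\left(v-\alpha u\right),
\]
(note the sign: $u_2=(\alpha u_1-v)/d$ gives $-\frac{k\omega}{d}u_1(\alpha u_1-v)$, not $+v$ as in your display). Here $v^+/\alpha$ is a nonnegative nonzero sub-solution, large constants are super-solutions, and the logistic structure gives uniqueness and hence periodicity, for every $k>0$. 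Your linear-stability argument is also much vaguer than needed: rather than a resolvent-perturbation statement ``for $k$ large,'' the paper simply observes that the principal eigenvalue of $-\frac{\mathrm{d}^2}{\mathrm{d}x^2}-\frac{k\omega}{d}(v-2\alpha u)$ is positive because $v-2\alpha u<0$ pointwise (since $\alpha u>v^+\geq v/2$). So the reduction you already used for uniqueness is in fact the whole proof; the fixed-point machinery is both unnecessary and insufficient.
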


\begin{proof}
First, we claim that there exists $\eta_2>0$ so small that solutions satisfying the preceding assumptions verify in fact the identity $\alpha u_1 - d u_2 = v$. Indeed, combining the two equations in (\ref{eq:homotopy_at_0}) we find that $\alpha u_1 - d u_2$ is a solution of (\ref{eq:sign_chang}) that is also close to $v$ in the $L^\infty$ topology. Since $v$ is a stable, whence isolated, solution of (\ref{eq:sign_chang}) , necessarily $\alpha u_1 -d u_2 = v$.

We proceed by showing that there exists a unique pair $(u_1,u_2)$ in the class of all $(u_1,u_2)$ satisfying $\alpha u_1 -d u_2 = v$. We notice that in the set of all $(u_1,u_2) \in \mathscr{C}^{1,1}_{L\text{-per}}$ satisfying  $\alpha u_1 - d u_2 = v$, the two equations
of (\ref{eq:homotopy_at_0}) are equivalent. Indeed, assuming $\alpha u_1 - d u_2 = v$, 
\begin{align*}
    \alpha\left(u_1''+\frac{\mu_1}{\alpha^2} \left(\alpha - v^+\right)v^+  - k \omega u_1 u_2\right) & = v''+d u_2''+\frac{\mu_1}{\alpha} \left(\alpha - v\right)v^+ - \alpha k \omega u_1 u_2 \\
    & = \frac{\mu_2}{d^2} \left(d + v \right) v^- +d u_2'' -\alpha k \omega u_1 u_2 \\
    & = d u_2''+\frac{\mu_2}{d^2} \left(d - v^- \right) v^- -\alpha k \omega u_1 u_2
\end{align*}

Therefore it suffices to prove the existence, uniqueness and linear stability of 
$u \in \mathscr{C}^{1,1}_{L\text{-per}}$ such that 
\begin{equation}\label{eq:homotopy_at_0_uncoupled}
    -u'' = \frac{\mu_1}{\alpha^2} \left(\alpha - v^+\right)v^+ +\frac{k \omega}{d} u(v-\alpha u).
\end{equation}

Notice as a preliminary that, up to the forcing
term $\frac{\mu_1}{\alpha^2} \left(\alpha - v^+\right)v^+ \geq 0$, this equation falls in the general
theory of periodic KPP reaction--diffusion equations developed by Berestycki, Hamel and Roques in 
\cite{Berestycki_Ham_1}. 

On one hand, $\frac{v^+}{\alpha}$ is a nonnegative nonzero sub-solution for
(\ref{eq:homotopy_at_0_uncoupled}). On the other hand, any sufficiently large constant is a super-solution.
The existence of a bounded positive solution $u$ satisfying $\alpha u>v^+$ follows. The uniqueness 
is easily established thanks to a classical comparison argument
relying upon the logistic form of $u\mapsto\frac{k \omega}{d} u(v-\alpha u)$ 
(we refer for instance to Berestycki--Hamel--Roques \cite[Theorem 2.4]{Berestycki_Ham_1}; regarding
uniqueness, the forcing term $\frac{\mu_1}{\alpha^2} \left(\alpha - v^+\right)v^+ $ does not play any role).
The periodicity then follows directly from the uniqueness. Finally, by definition, the 
solution $u$ is linearly stable if 
\[
    \lambda_{1,L-\textup{per}}\left(-\frac{\textup{d}^2}{\textup{d}x^2}-\frac{k \omega}{d}\left(v-2\alpha u\right)\right)>0.
\]
It is well-known that the preceding inequality is satisfied if $v-2\alpha u < 0$, which is true indeed since
$\frac{v}{2} \leq v^+ < \alpha u$.
\end{proof}

We now pass to the second step of the construction. 
For notation convenience, let 
$X = \mathscr{C}^{0,1/2}_{L\text{-per}}$ 
(any H\"older exponent $\gamma \in \left(0,1\right)$ would do) and let 
$L \in \mathcal{K}(X;X)$ be the linear 
compact operator such that, for all $z, f \in X$, $z=Lf$ if and only if $-z'' + z = f$. 

We consider the homotopy $H : X^2 \times [0,1] \to X^2$ defined by 
\[
    H(u;t)=u-L\left(u+f(u;t)\right),
\]
where 
\[
  f\left(u; t\right) = \begin{pmatrix}
    t \mu_1( 1 - u_1) u_1 + (1-t) \frac{\mu_1}{\alpha^2} \left(\alpha - (\alpha u_{1} - d u_2)^+\right)(\alpha u_{1} - d u_2)^+ - k \omega u_1 u_2 \\
    \frac{1}{d}\left(t \mu_2( 1 - u_2) u_2 + (1-t) \frac{\mu_2}{d^2} \left(d - (\alpha u_{1} - d u_2)^-\right)(\alpha u_{1} - d u_2)^- - \alpha k \omega u_1 u_2\right)
  \end{pmatrix}.
\]
Observe that the homotopy $H$ is of the form $\id-K_t$ where $\id : X^2 \to X^2$ is the identity operator, and $K_t \in \mathcal{K}(X^2\times [0,1];X^2)$ is a compact operator for any $t \in [0,1]$ and is continuous in $t$, by standard elliptic estimates. In this regard, we observe that $k$ is fixed.

We have that $H(u_1,u_2; 0) = 0$ if and only if $(u_1,u_2)$ is a solution of (\ref{eq:homotopy_at_0}), while $H(u_1,u_2; 1) = 0$ if and only if $(u_1,u_2)$ is a solution of (\ref{eq:elliptic_system}). Our goal is to apply the theory of the Leray--Schauder degree in order to evince the existence of solutions of (\ref{eq:elliptic_system}) from the existence of solutions of (\ref{eq:homotopy_at_0}), \lemref{decoupled}.

Now, we fix $\eta=\min\left(\eta_1,\eta_2\right)$ (see \corref{closedness} and \lemref{decoupled}) and define, for any $\eps>0$, the set 
\[
  	{O}_{\eps}= \left\{u\in X^2\ |\ u_1>0,\ u_2>0,\ \left\|\alpha u_{1} - d u_2 - v\right\|_{L^\infty} < \eta,\ \left\|u - \left(\frac{v^+}{\alpha}, \frac{v^-}{d}\right) \right\|_{X^2} < 2\eps\right\}.
\]
It is a connected open subset of $X^2$. Moreover, it should be noticed that provided $\varepsilon$ 
is small enough, then ${O}_\varepsilon$ does not depend on $\eta$ and reduces to 
\[
  	{O}_{\eps}= \left\{u\in X^2\ |\ u_1>0,\ u_2>0,\ \left\|u - \left(\frac{v^+}{\alpha}, \frac{v^-}{d}\right) \right\|_{X^2} < 2\eps\right\}.
\]

\begin{lem}
For any $\eps > 0$ there exists $\bar k > 0$ such that the equation
\[
  H(u_1,u_2;t) = 0
\]  
has no solutions for any $t\in [0,1]$ and $k \geq \bar k$ on $\partial {O}_\eps$.
\end{lem}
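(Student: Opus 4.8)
The statement asserts that for each $\eps>0$ there exists $\bar k$ such that, for all $k\geq\bar k$ and all $t\in[0,1]$, the homotopy $H(\cdot;t)$ has no zero on $\partial{O}_\eps$. The natural approach is by contradiction: suppose not, so there exist sequences $k_n\to+\infty$, $t_n\in[0,1]$, and $u^{(n)}=(u_1^{(n)},u_2^{(n)})\in\partial{O}_\eps$ with $H(u^{(n)};t_n)=0$. Each such $u^{(n)}$ is then a nonnegative solution of \eqref{eq:homotopy_a_priori} with parameters $t_n,k_n$; moreover, being on the boundary of ${O}_\eps$, it satisfies the constraint $\|\alpha u_1^{(n)}-d u_2^{(n)}-v\|_{L^\infty}\leq\eta\leq\eta_1$. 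The plan is to invoke \lemref{a_priori}/\corref{closedness} to force these boundary solutions to concentrate near the segregated state, which will contradict their lying on $\partial{O}_\eps$.

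The key steps, in order: (1) Note that since $u^{(n)}\in\partial{O}_\eps$ and (for $\eps$ small) ${O}_\eps$ is the set $\{u_1>0,\ u_2>0,\ \|u-(v^+/\alpha,v^-/d)\|_{X^2}<2\eps\}$, either $\|u^{(n)}-(v^+/\alpha,v^-/d)\|_{X^2}=2\eps$, or one of $u_1^{(n)},u_2^{(n)}$ touches $0$ somewhere. (2) By \lemref{a_priori_infty}, the $u^{(n)}$ are uniformly bounded in Lipschitz norm, hence by Ascoli--Arzelà a subsequence converges in $\mathscr{C}^{0,\gamma}_{L\textup{-per}}$ for $\gamma\in(0,1)$; the constraint $\|\alpha u_1^{(n)}-d u_2^{(n)}-v\|_{L^\infty}\leq\eta$ is preserved in the limit. (3) Apply \corref{closedness}: for $n$ large enough that $k_n\geq\bar k(\eps)$, one gets $\|u^{(n)}-(v^+/\alpha,v^-/d)\|_{H^1_{L\textup{-per}}\cap\mathscr{C}^{0,1/2}}\leq\eps$, hence also $\|u^{(n)}-(v^+/\alpha,v^-/d)\|_{X^2}\leq\eps<2\eps$, ruling out the first alternative in (1). (4) For the second alternative (some component hitting $0$): the limit $(\bar u_1,\bar u_2)$ equals $(v^+/\alpha,v^-/d)$ by \lemref{a_priori}, and $v$ changes sign on $[0,L]$; but this alone doesn't immediately contradict $u_i^{(n)}(x_n)=0$ at some point $x_n$ where $v$ vanishes. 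Here I would instead use the strong maximum principle directly: as established in the proof of \lemref{a_priori_infty}, any nonnegative nonzero solution of \eqref{eq:homotopy_a_priori} has $u_1>0$ and $u_2>0$ strictly (since $0$ is a subsolution for each component equation and the comparison/strong maximum principle upgrades nonnegativity to positivity). Since the $u^{(n)}$ are nonzero (the constraint $\|\alpha u_1^{(n)}-d u_2^{(n)}-v\|_{L^\infty}\leq\eta<\eta_1$ with $v\not\equiv0$ forces nonzero), both components are strictly positive, so they cannot lie on the part of $\partial{O}_\eps$ where a component vanishes.

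Putting the pieces together: for $n$ large, $u^{(n)}$ satisfies $u_1^{(n)}>0$, $u_2^{(n)}>0$, and $\|u^{(n)}-(v^+/\alpha,v^-/d)\|_{X^2}<2\eps$, i.e. $u^{(n)}\in{O}_\eps$ — contradicting $u^{(n)}\in\partial{O}_\eps$. Hence the desired $\bar k$ exists (one may take $\bar k=\bar k(\eps)$ from \corref{closedness}, possibly enlarged).

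**Main obstacle.** The delicate point is the interplay between the two descriptions of $\partial{O}_\eps$ and making sure the constraint $\|\alpha u_1-d u_2-v\|_{L^\infty}<\eta$ plays no role on the boundary for small $\eps$ — this is exactly the remark in the text that ${O}_\eps$ "reduces" to the simpler set, but one must verify that a point satisfying $\|u-(v^+/\alpha,v^-/d)\|_{X^2}<2\eps$ automatically satisfies the $\eta$-constraint strictly when $\eps$ is small relative to $\eta$ (clear, since $\alpha u_1-d u_2-v = \alpha(u_1-v^+/\alpha)-d(u_2-v^-/d)$, so $\|\alpha u_1-d u_2-v\|_{L^\infty}\leq(\alpha+d)\cdot 2\eps$). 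Thus for $\eps<\eta/(2(\alpha+d))$ the constraint is inactive on $\overline{{O}_\eps}$, and the boundary is genuinely just $\{$one component $=0\} \cup \{\|u-(v^+/\alpha,v^-/d)\|_{X^2}=2\eps\}$. Handling the first piece via the strong maximum principle (rather than via the limiting argument, which only gives $\bar u_i=v^\pm/\cdot$ and does not by itself preclude $u_i^{(n)}$ touching zero near a sign change of $v$) is the cleanest route, and is the step most worth spelling out carefully.
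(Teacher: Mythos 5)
Your proof is correct and takes essentially the same route as the paper, whose entire argument is the single line ``This result follows directly from Corollary~\ref{cor:closedness}.'' You correctly observe that the corollary by itself only excludes zeros of $H$ on the spherical part $\bigl\{\|u-(v^+/\alpha,v^-/d)\|_{X^2}=2\eps\bigr\}$ of $\partial O_\eps$, and that the remaining part of the boundary (where some component touches zero) must be excluded by the strict positivity established at the start of the proof of Lemma~\ref{lem:a_priori_infty} via the strong maximum principle --- a step the paper leaves implicit but which is genuinely needed, since the $\mathscr{C}^{0,1/2}$-closeness to $(v^+/\alpha,v^-/d)$ does not prevent $u_1$ or $u_2$ from vanishing near the sign changes of $v$. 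Your explicit verification that the $\eta$-constraint is inactive on $\overline{O_\eps}$ once $\eps<\eta/(2(\alpha+d))$ is the right way to reconcile the two descriptions of $O_\eps$ given in the paper.
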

This result follows directly from \corref{closedness}.

\begin{lem} For any $\eps>0$, the equation
\[
  H(u_1,u_2; 0) = 0
\]
has a unique solution in ${O}_\eps$. Moreover there exists $\bar k > 0$ such that if $k \geq \bar k$, then this solution has fixed point index 1, that is
\[
  \ind_{X^2}({O}_\eps; (u_1,u_2)) = 1.
\]
\end{lem}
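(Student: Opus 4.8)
The plan is to establish existence and uniqueness of the solution of $H(u_1,u_2;0)=0$ in ${O}_\eps$ first, and then compute its fixed point index by linearizing the homotopy operator at that solution. By \lemref{decoupled} with $\eta_2\ge\eta$, for $k\ge\bar k(\eps)$ (from \corref{closedness}) there is a unique positive solution $(u_1,u_2)$ of (\ref{eq:homotopy_at_0}) with $\|\alpha u_1-d u_2-v\|_{L^\infty}<\eta$, and this solution satisfies $\alpha u_1-d u_2=v$; \corref{closedness} then places it inside ${O}_\eps$ (indeed in the smaller description of ${O}_\eps$ that is independent of $\eta$ for $\eps$ small). Uniqueness of the solution of $H(\cdot;0)=0$ within ${O}_\eps$ is exactly the uniqueness statement of \lemref{decoupled}, since $H(u;0)=0$ is equivalent to $(u_1,u_2)$ solving (\ref{eq:homotopy_at_0}) and every such solution in ${O}_\eps$ automatically satisfies the $L^\infty$-closeness and hence the segregation identity $\alpha u_1-d u_2=v$. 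So the first assertion is essentially a bookkeeping exercise reassembling the earlier lemmas.

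For the index computation, I would show $\ind_{X^2}({O}_\eps;(u_1,u_2))=1$ by proving that the Fr\'echet derivative $\mathrm{D}_u H(u_1,u_2;0) = \id - \mathrm{D}_u K_0(u_1,u_2)$ is invertible and has no negative real eigenvalues crossing, i.e.\ that $1$ is not an eigenvalue of the compact operator $\mathrm{D}_u K_0(u_1,u_2)$ and, more strongly, that the sum of algebraic multiplicities of eigenvalues of $\mathrm{D}_u K_0$ lying in $(1,+\infty)$ is even (Leray--Schauder index formula). The linearized system decouples along the manifold $\{\alpha u_1-d u_2=v\}$ in the same way the nonlinear system did in the proof of \lemref{decoupled}: writing $u=\alpha u_1$ (or rather using $u$ as in (\ref{eq:homotopy_at_0_uncoupled})), the linearization reduces to the scalar operator $-\varphi'' - \frac{k\omega}{d}(v-2\alpha u)\varphi$, whose periodic principal eigenvalue is strictly positive by the linear stability established in \lemref{decoupled} (recall $v - 2\alpha u < 0$ since $\tfrac{v}{2}\le v^+<\alpha u$). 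A self-adjoint Schr\"odinger-type operator with positive principal periodic eigenvalue is positive definite, hence invertible with spectrum bounded below by a positive constant; translating back through the resolvent $L=(-\tfrac{d^2}{dx^2}+1)^{-1}$, this means $\mathrm{D}_u K_0(u_1,u_2)$ has no eigenvalue in $[1,+\infty)$ at all, so the index is $(-1)^0=1$.

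The one point requiring care — and the main obstacle — is justifying that the linearization of $H(\cdot;0)$ genuinely decouples, i.e.\ that the two-component linearized operator inherits the reduction to the scalar operator (\ref{eq:homotopy_at_0_uncoupled}). Near the solution, where $v$ is bounded away from $0$ on the support of $\mu_1$ and of $\mu_2$ respectively, the right-hand sides of (\ref{eq:homotopy_at_0}) are smooth (no $(\cdot)^\pm$ singularity is hit), so $K_0$ is $\mathscr{C}^1$ near $(u_1,u_2)$ and the linearization is an honest $2\times2$ cooperative-type elliptic system; one checks its principal eigenvalue equals that of the scalar reduction by noting that the combination $\alpha\varphi_1 - d\varphi_2$ solves a scalar equation forcing it into the one-dimensional kernel/eigenspace, exactly mirroring the algebra displayed in the proof of \lemref{decoupled}. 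Once the decoupling is in place, positivity of the scalar principal eigenvalue does all the remaining work, and the index is $1$; I would also remark that the needed threshold $\bar k$ is the maximum of the one from \corref{closedness} (keeping the solution in ${O}_\eps$ and off $\partial{O}_\eps$) and the one from \lemref{decoupled} (guaranteeing $v<2\alpha u$), so a single $\bar k$ suffices.
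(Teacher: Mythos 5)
The paper's proof is essentially a two-sentence citation: existence, uniqueness and linear stability come from Lemma \ref{lem:decoupled}, and the index follows from the general result (Ambrosetti--Arcoya, Theorem 4.2.11) that the fixed point index of an isolated zero of a $\mathscr{C}^1$ operator of the form $\id-K$ can be computed from the linearization. Your first paragraph (existence and uniqueness by combining Lemma \ref{lem:decoupled} and Corollary \ref{cor:closedness}) is the same bookkeeping the paper intends, and is fine.

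Where you go beyond the paper is the index computation, and there is a genuine gap there. You argue that the linearization decouples in the same way the nonlinear system (\ref{eq:homotopy_at_0}) did in the proof of Lemma \ref{lem:decoupled}, and that therefore the spectrum of $\mathrm{D}_u K_0$ is controlled by the single scalar operator $-\frac{d^2}{dx^2}-\frac{k\omega}{d}(v-2\alpha u)$. But the algebraic cancellation exploited in the proof of Lemma \ref{lem:decoupled} works because $\alpha\bigl(-u_1''\bigr)-\bigl(-d u_2''\bigr)=-(\alpha u_1 - d u_2)''$ and the competition terms cancel exactly; this is a statement at the level of the equation, not at the level of the resolvent. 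For the eigenvalue problem $\mathrm{D}_u K_0\varphi=\mu\varphi$, i.e.\ $L(\varphi_i + (\mathrm{D}_u f)_i)=\mu\varphi_i$, the two components are hit by $L=(-\frac{d^2}{dx^2}+1)^{-1}$ and by the factor $\mu$ in a way that does not respect the combination $\alpha\varphi_1 - d\varphi_2$, because the diffusion coefficients are $1$ and $d$ respectively. Concretely, taking $\alpha$ times the first resolvent equation minus the second gives a term $\mu(\alpha\varphi_1-\varphi_2)$, not $\mu(\alpha\varphi_1-d\varphi_2)$, so the equation for $\eta=\alpha\varphi_1-d\varphi_2$ does \emph{not} close. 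The decoupling does hold for $\mu=1$ (equivalently, for the kernel of the linearized elliptic system), so your argument does rule out $1$ being an eigenvalue of $\mathrm{D}_u K_0$; but it does not by itself show that $\mathrm{D}_u K_0$ has no eigenvalue in $(1,+\infty)$, which is what you need to conclude $(-1)^0=1$. The cleanest repair is to do exactly what the paper does: invoke the cited textbook theorem relating linear stability (as proved in Lemma \ref{lem:decoupled}) to the Leray--Schauder index, rather than trying to recover it from a component-wise decoupling of the resolvent equation.
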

This result follows from \lemref{decoupled}. We also recall that the fixed point index of an isolated solution can be computed by linearization if the equation involves $\mathscr{C}^1$ operators, \cite[Theorem 4.2.11]{Ambrosetti_Arc}.

We can thus conclude by virtue of the Leray--Schauder theorem (see \cite{Leray_Schauder} and \cite[Theorem 4.3.4]{Ambrosetti_Arc}).

\begin{lem}\label{lem:u1_u2_finally}
For any $\eps > 0$, there exists $\bar k > 0$ such that, for all $k > \bar k$, (\ref{eq:elliptic_system}) has a solution $(u_{1,k},u_{2,k})$ in $\mathcal{O}_\eps$. Moreover,
\[
  	\lim_{k \to +\infty}  \left\| (u_{1,k}, u_{2,k}) - \left(\frac{v^+}{\alpha}, \frac{v^-}{d}\right)\right\|_{H^1_{L\textup{-per}} \cap \mathscr{C}^{0,\frac12}}  + \left\| \alpha u_{1,k} - d u_{2,k} - v \right\|_{\mathscr{C}^{1,\frac12}} = 0.
\]
\end{lem}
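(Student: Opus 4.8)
The plan is to assemble the Leray--Schauder degree argument that the preceding lemmas have set up, and then to extract the convergence from the a priori estimates. First I would fix $\eps>0$ and invoke the last two lemmas: for $k$ larger than the corresponding threshold $\bar k$, the homotopy $H(\cdot\,;t)$ has no zero on $\partial O_\eps$ for any $t\in[0,1]$. Since $O_\eps$ is a bounded open subset of $X^2$ and $H(\cdot\,;t)=\id-K_t$ with $(K_t)_{t\in[0,1]}$ compact and continuous in $t$, the Leray--Schauder degree $\deg\bigl(H(\cdot\,;t),O_\eps,0\bigr)$ is well defined and, by homotopy invariance, constant in $t\in[0,1]$. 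At $t=0$ the unique zero of $H(\cdot\,;0)$ in $O_\eps$ is the solution of (\ref{eq:homotopy_at_0}) from \lemref{decoupled}, whose fixed point index equals $1$; hence $\deg\bigl(H(\cdot\,;0),O_\eps,0\bigr)=1$, so $\deg\bigl(H(\cdot\,;1),O_\eps,0\bigr)=1\neq0$. The solution property of the degree then yields a zero $(u_{1,k},u_{2,k})\in O_\eps$ of $H(\cdot\,;1)$, i.e.\ a weak $L$-periodic solution of (\ref{eq:elliptic_system}); because the coefficients are smooth and, at $t=1$, the reaction terms of (\ref{eq:homotopy_a_priori}) involve no truncation, a routine elliptic bootstrap makes this solution $\mathscr{C}^{1,1}_{L\textup{-per}}$ (indeed smoother).

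Next I would establish the convergence. Set $\eta=\min(\eta_1,\eta_2)$ as before, fix a reference level $\eps_0>0$, and keep, for each large $k$, a solution $(u_{1,k},u_{2,k})\in O_{\eps_0}$ obtained as above. By the definition of $O_{\eps_0}$ these solutions satisfy $\|\alpha u_{1,k}-d u_{2,k}-v\|_{L^\infty}<\eta$, so they fall within the scope of \lemref{a_priori} taken at $t=1$. Applying \lemref{a_priori} for an arbitrary $\eps>0$ produces $\bar k(\eps)\geq1$ such that, for all $k\geq\bar k(\eps)$,
\[
\left\| (u_{1,k}, u_{2,k}) - \left(\frac{v^+}{\alpha}, \frac{v^-}{d}\right)\right\|_{H^1_{L\textup{-per}} \cap \mathscr{C}^{0,\frac12}}  + \left\| \alpha u_{1,k} - d u_{2,k} - v \right\|_{\mathscr{C}^{1,\frac12}} \leq \eps ,
\]
which is precisely the claimed limit as $k\to+\infty$. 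This estimate also shows that for $k\geq\bar k(\eps)$ the solution lies in $O_\eps$, so the same family of solutions witnesses the first assertion of the lemma for every $\eps>0$.

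I do not expect a real obstacle at this stage: the analytic substance has already been packed into the uniform $L^\infty$, Lipschitz and segregation bounds of \lemref{a_priori_infty}--\lemref{a_priori} (which force the zeros of the homotopy to stay off $\partial O_\eps$) and into the existence, uniqueness and nondegeneracy of the decoupled solution in \lemref{decoupled} (which pins down the degree at the base point $t=0$). The only point that genuinely needs checking is the compatibility of hypotheses used in the second step --- that the zeros returned by the degree really do satisfy the $\eta$-closeness required to re-enter \lemref{a_priori} --- and this is automatic, since those zeros belong to $O_\eps\subset O_{\eps_0}$ as soon as $\eps\leq\eps_0$, and the bound $\|\alpha u_1-d u_2-v\|_{L^\infty}<\eta$ is part of the definition of $O_{\eps_0}$.
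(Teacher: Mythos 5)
Your proposal is correct and follows the same route as the paper: the paper simply invokes the Leray--Schauder theorem using the two preceding lemmas (no zeros of $H(\cdot\,;t)$ on $\partial O_\eps$ for $k$ large, and degree $1$ at $t=0$ from \lemref{decoupled}) to get a zero of $H(\cdot\,;1)$ in $O_\eps$, and the convergence statement is exactly the content of \lemref{a_priori} applied at $t=1$. You spell out the homotopy-invariance and regularity steps in more detail, but the argument is the one intended.
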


If needed, one can improve the convergence result, by stating that the solutions are uniformly bounded in the Lipschitz norm and converge in the $\mathscr{C}^{0,\gamma}$ norm for any $\gamma \in (0,1)$. See, on this subject, the results in \cite{Conti_Terracin}. 

\subsection{Linear stability for $k$ large}

We now investigate the linear stability of the solutions obtained in \lemref{u1_u2_finally}. 
To this end, we consider the linearized system (\ref{eq:elliptic_system}) at the solution $(u_1, u_2)$ and introduce its periodic 
principal eigenvalue. 

For all $k>\bar k$, let
\[
\lambda_{1,k} =\lambda_{1,\textup{$L$-per}}\left(-\left(\begin{matrix}\frac{\mathrm{d}^{2}}{\mathrm{d}x^{2}}+\mu_{1}\left(1-2u_{1,k}\right)-k\omega u_{2,k} & k\omega u_{1,k}\\
\alpha k\omega u_{2,k} & d\frac{\mathrm{d}^{2}}{\mathrm{d}x^{2}}+\mu_{2}\left(1-2u_{2,k}\right) -\alpha k\omega u_{1,k}
\end{matrix}\right)\right)
\]
and assume that the associated periodic principal eigenfunction $(\varphi_k, \psi_k)$ is normalized in such a way that
\[
  \max_{x \in \left[0,L\right]} \left(\alpha\varphi_k + d\psi_k \right)(x) = 1.
\]
Observe that since both $\varphi_k$ and $\psi_k$ are positive, this automatically implies that the two functions are globally bounded.

We start by showing a priori estimates on the principal eigenvalue and the principal eigenfunctions.
\begin{lem}\label{lem:bounded_eigen}
The principal eigenvalues are uniformly bounded from below. There exists $C\in \R$ such that
\[
  \lambda_{1,k} > -C \qquad \textup{for all }k>\bar k.
\]
\end{lem}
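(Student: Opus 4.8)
The plan is to establish the lower bound by combining the known convergence of $(u_{1,k},u_{2,k})$ to the segregated limit with a testing argument against a suitable positive function, exploiting the sign structure of the off-diagonal coupling terms. Recall that the periodic principal eigenvalue of the linearized operator admits the variational characterization as an infimum of Rayleigh-type quotients only when the operator is cooperative; here the system satisfied by $(u_{1,k}, 1-u_{2,k})$ is cooperative, so $\lambda_{1,k}$ coincides with the principal eigenvalue of that monotone system and we have an associated positive periodic eigenfunction pair. First I would exploit the normalization $\max_{[0,L]}(\alpha\varphi_k + d\psi_k) = 1$ together with Harnack-type estimates: since $(\varphi_k,\psi_k)$ solves a linear periodic system with coefficients that are bounded in $L^\infty$ away from the competition region — and where the large coefficient $k\omega u_{i,k}$ appears — the standard trick is to test the eigenvalue equation against $(\alpha\varphi_k, d\psi_k)$ (or sum the two scalar equations with weights $\alpha$ and $d$) so that the $k$-order terms, which are $-k\omega u_{2,k}\varphi_k + \alpha k\omega u_{1,k}\psi_k$ weighted by $\alpha$, and $\alpha k\omega u_{2,k}\varphi_k - \alpha k\omega u_{1,k}\psi_k$ weighted by $d$... no — the cleaner choice is to test the first equation by $\varphi_k$ and the second by $\psi_k/d$ and add, which makes the coupling contribute $-k\int\omega u_{2,k}\varphi_k^2 + k\int\omega u_{1,k}\varphi_k\psi_k + \alpha k\int\omega u_{2,k}\varphi_k\psi_k - \alpha k\int\omega u_{1,k}\psi_k^2$; this does not obviously have a sign, so instead I would sum the first equation tested by $\alpha\varphi_k$ with the second tested by $\psi_k$, giving coupling $k\int\omega u_{1,k}\varphi_k\psi_k(\alpha\cdot\frac{k}{\alpha}-\ldots)$.

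The key observation that makes everything work is that the combination $\alpha\varphi_k + d\psi_k$ satisfies a \emph{scalar} equation in which the $O(k)$ terms \emph{cancel}, exactly mirroring the cancellation used for $\alpha u_1 - d u_2$ in Lemma~\ref{lem:decoupled}: multiplying the first eigenvalue equation by $\alpha$ and the second by nothing, or rather forming $\alpha\times(\text{eq}_1) + 1\times(\text{eq}_2)$ appropriately, one checks that
\[
-(\alpha\varphi_k + d\psi_k)'' = \alpha\mu_1(1-2u_{1,k})\varphi_k + \mu_2(1-2u_{2,k})\psi_k + \lambda_{1,k}(\alpha\varphi_k + d\psi_k) + (\text{terms of order }k\text{ that cancel}),
\]
because the coupling contributions are $-\alpha k\omega u_{2,k}\varphi_k + \alpha k\omega u_{1,k}\psi_k$ from the first equation and $\alpha k\omega u_{2,k}\varphi_k - \alpha k\omega u_{1,k}\psi_k$ from the second, which sum to zero. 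Thus $g_k := \alpha\varphi_k + d\psi_k$ is a positive $L$-periodic function with $\max g_k = 1$ satisfying $-g_k'' = h_k g_k + \lambda_{1,k} g_k$ where $h_k$ is bounded in $L^\infty$ uniformly in $k$ (since $\mu_1,\mu_2$ are fixed, $u_{1,k},u_{2,k}$ are uniformly bounded by Lemma~\ref{lem:a_priori_infty}, and $\varphi_k/g_k, \psi_k/g_k \in [0,1]$). Integrating this scalar equation over $[0,L]$ against $g_k$ itself — or simply evaluating at a maximum point $x_k$ of $g_k$, where $g_k''(x_k) \le 0$ and $g_k(x_k) = 1$ — yields $0 \le -g_k''(x_k) = h_k(x_k) + \lambda_{1,k}$, hence $\lambda_{1,k} \ge -\|h_k\|_{L^\infty} \ge -C$ with $C$ independent of $k$.

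I expect the main obstacle to be verifying rigorously the cancellation of the $O(k)$ terms and controlling $h_k$: one must be careful that $h_k$ is expressed purely in terms of the ratios $\varphi_k/g_k$ and $\psi_k/g_k$, which lie in $[0,1]$ by positivity and the normalization, multiplied by the fixed bounded coefficients $\mu_1(1-2u_{1,k})$, $\mu_2(1-2u_{2,k})$ — all uniformly bounded thanks to Lemma~\ref{lem:a_priori_infty} — so that no $k$-dependence leaks in. A secondary technical point is to confirm that the maximum of the smooth periodic function $g_k$ is attained at an interior point of a periodicity cell where the classical second-derivative test applies; this is immediate since $g_k \in \mathscr{C}^2$ and periodic. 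With these points settled the bound $\lambda_{1,k} > -C$ follows at once, and no compactness or limiting argument in $k$ is actually needed — the estimate is genuinely pointwise and uniform.
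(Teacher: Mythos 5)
Your proposal is correct and follows essentially the same route as the paper: sum the two eigenvalue equations with weights $\alpha$ and $1$ so that the $O(k)$ coupling terms cancel pointwise, then conclude from the resulting scalar equation for $\alpha\varphi_k + d\psi_k$ whose zeroth-order coefficient is uniformly bounded (either by evaluating at an interior maximum, as you do, or equivalently by noting no positive periodic supersolution of a strongly positive elliptic inequality can exist, as the paper does). One small bookkeeping slip worth fixing: since the second equation has diffusion coefficient $d$ but the eigenvalue term carries no $d$, summing $\alpha\times(\text{eq}_1)+(\text{eq}_2)$ gives
\[
-(\alpha\varphi_k + d\psi_k)'' = \alpha\mu_1(1-2u_{1,k})\varphi_k + \mu_2(1-2u_{2,k})\psi_k + \lambda_{1,k}(\alpha\varphi_k + \psi_k),
\]
so the eigenvalue term is $\lambda_{1,k}(\alpha\varphi_k+\psi_k)$, not $\lambda_{1,k}\,g_k$; the argument still closes because $\alpha\varphi_k+\psi_k$ and $g_k=\alpha\varphi_k+d\psi_k$ are comparable with constants depending only on $d$. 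Similarly, the correct pointwise bounds are $\alpha\varphi_k/g_k \leq 1$ and $d\psi_k/g_k \leq 1$ rather than $\varphi_k/g_k,\psi_k/g_k \leq 1$; this only changes the value of $C$, not the conclusion.
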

\begin{proof}
It suffices to take
\[
  C = \sup_{k>\bar k, x \in \R} \left(\left|\mu_1 (1 - 2u_{1,k})\right| + \left|\mu_2(1-2u_{2,k}) \right|\right).
\]
Indeed, the solution $(u_{1,k},u_{2,k}) \in {O}_\eps$ are uniformly bounded. Thus $C$ is finite. We then consider the sum of the equation in $\alpha \varphi_k$ and in $\psi_k$. The conclusion follows from the fact that the equation
\[
-\left(\alpha\varphi_k + d\psi_k \right)'' = \mu_1 (1 - 2u_{1,k}) \alpha \varphi_k + \mu_2( 1 - 2 u_{2,k})\psi_k + \lambda_{1,k} \left(\alpha\varphi_k + \psi_k\right),
\]
where the right-hand side is smaller than or equal to $(C+\lambda_{1,k}) \left(\alpha\varphi_k + \psi_k\right)$,
has no positive $L$-periodic solution if $\lambda_{1,k} < -C$.
\end{proof}

\begin{lem}\label{lem:decay_eigen}
For any $\eps > 0$ and $\delta > 0$, there exists $\bar k > 0$ such that
\[
  \sup_{\{v^- > \eps\}} \varphi_k +  \sup_{\{v^+ > \eps\}} \psi_k \leq \delta
\]
for any $k \geq \bar k$.
\end{lem}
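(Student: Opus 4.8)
The plan is a direct barrier argument on each competitor's territory, powered by the large absorption terms in the eigenvalue system. I will only treat the bound for $\varphi_k$ on $\{v^->\eps\}$; the bound for $\psi_k$ on $\{v^+>\eps\}$ is obtained by the symmetric argument, exchanging the roles of $u_{1,k}\leftrightarrow u_{2,k}$, $\varphi_k\leftrightarrow\psi_k$, $\alpha\leftrightarrow d$, $v^+\leftrightarrow v^-$ and using the second equation of (\ref{eq:elliptic_system}) and the second component of the eigenvalue system. Assume, as we may, that $\{v^->\eps\}$ is nonempty (otherwise the first supremum is void). Since $v\in\mathscr{C}^{1,1}_{L\textup{-per}}$ is Lipschitz, I first fix $\rho=\rho(\eps)>0$ and the three $L$-periodic open sets
\[
\Omega_1=\{v<-\eps\}\subset\Omega_2=\{v<-\tfrac{2}{3}\eps\}\subset\Omega_3=\{v<-\tfrac{1}{3}\eps\}
\]
with $\dist(\Omega_1,\partial\Omega_2)\ge 2\rho$, $\dist(\Omega_2,\partial\Omega_3)\ge 2\rho$, and such that every connected component of $\Omega_3$ meeting $\Omega_2$ (resp. of $\Omega_2$ meeting $\Omega_1$) has length at least $4\rho$; these facts follow at once from the Lipschitz bound on $v$. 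I will use that $(u_{1,k},u_{2,k})\to(v^+/\alpha,v^-/d)$ uniformly by \lemref{u1_u2_finally}, hence $\|u_{1,k}\|_\infty+\|u_{2,k}\|_\infty\le C_0$ uniformly in $k$, and that $0<\varphi_k\le 1/\alpha$, $0<\psi_k\le 1/d$ by the chosen normalization. I also need an upper bound on $\lambda_{1,k}$: summing $\alpha$ times the first equation of the eigenvalue system and the second one, and integrating over a period, gives $\lambda_{1,k}\int_0^L(\alpha\varphi_k+\psi_k)=-\int_0^L\bigl(\alpha\mu_1(1-2u_{1,k})\varphi_k+\mu_2(1-2u_{2,k})\psi_k\bigr)$, whence $|\lambda_{1,k}|\le C$ uniformly in $k$ (this also reproves the lower bound of \lemref{bounded_eigen}).

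The first and crucial step is to show that on $\Omega_2$ the density $u_{1,k}$ is not merely small but \emph{exponentially} small in $\sqrt{k}$. Since $u_{2,k}\to v^-/d\ge\eps/(3d)$ uniformly on $\Omega_3$, there are $k_1$ and $c_\eps>0$ with $u_{2,k}\ge c_\eps$ on $\Omega_3$ for $k\ge k_1$. The first equation of (\ref{eq:elliptic_system}) then gives $-u_{1,k}''+k\omega u_{2,k}u_{1,k}=\mu_1(1-u_{1,k})u_{1,k}\le\|\mu_1\|_\infty u_{1,k}$, hence $-u_{1,k}''+m_k u_{1,k}\le 0$ on $\Omega_3$ with $m_k:=kc_\eps\min_{[0,L]}\omega-\|\mu_1\|_\infty\to+\infty$. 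Comparing $u_{1,k}$, on each component $(a,b)$ of $\Omega_3$ that meets $\Omega_2$, with the explicit supersolution $C_0\cosh\bigl(\sqrt{m_k}(x-\tfrac{a+b}{2})\bigr)/\cosh\bigl(\sqrt{m_k}\tfrac{b-a}{2}\bigr)$ of $-w''+m_kw=0$ (which dominates $u_{1,k}\le C_0$ at the endpoints), then using $\dist(\Omega_2,\partial\Omega_3)\ge 2\rho$ together with the elementary inequality $\cosh(A-B)/\cosh A\le 2e^{-B}$ for $0\le B\le A$, yields $\sup_{\Omega_2}u_{1,k}\le 2C_0\,e^{-2\rho\sqrt{m_k}}$. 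In particular $\tau_k:=(\|\omega\|_\infty/d)\sup_{\Omega_2}(ku_{1,k})\to 0$, because $ke^{-2\rho\sqrt{m_k}}\to 0$.

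The second step is the analogous barrier for $\varphi_k$, now with the vanishing forcing $\tau_k$. The first component of the eigenvalue system reads $-\varphi_k''+k\omega u_{2,k}\varphi_k=(\mu_1(1-2u_{1,k})+\lambda_{1,k})\varphi_k+k\omega u_{1,k}\psi_k$; on $\Omega_2$, using $u_{2,k}\ge c_\eps$, the uniform bound $|\mu_1(1-2u_{1,k})+\lambda_{1,k}|\le C_3$, $\psi_k\le 1/d$ and the first step, this becomes $-\varphi_k''+M_k\varphi_k\le\tau_k$ on $\Omega_2$ with $M_k:=kc_\eps\min_{[0,L]}\omega-C_3\to+\infty$. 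On each component $(a',b')$ of $\Omega_2$ meeting $\Omega_1$, the function $\tau_k/M_k+\tfrac{1}{\alpha}\cosh\bigl(\sqrt{M_k}(x-\tfrac{a'+b'}{2})\bigr)/\cosh\bigl(\sqrt{M_k}\tfrac{b'-a'}{2}\bigr)$ solves $-w''+M_kw=\tau_k$ and dominates $\varphi_k\le 1/\alpha$ at the endpoints, hence dominates $\varphi_k$ on $(a',b')$ by the maximum principle; restricting to $\Omega_1$ (each of whose points lies at distance $\ge 2\rho$ from $\partial\Omega_2$, inside a component of length $\ge 4\rho$) and using $\cosh(A-B)/\cosh A\le 2e^{-B}$ once more gives $\sup_{\Omega_1}\varphi_k\le\tau_k/M_k+(2/\alpha)e^{-2\rho\sqrt{M_k}}\to 0$. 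The symmetric argument gives $\sup_{\{v^+>\eps\}}\psi_k\to 0$, so choosing $\bar k$ so large that both suprema are below $\delta/2$ for all $k\ge\bar k$ finishes the proof.

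The crux — and the reason a naive argument fails — is the coupling term $k\omega u_{1,k}\psi_k$ in the $\varphi_k$-equation: it carries the diverging factor $k$, while the a priori estimates control $ku_{1,k}u_{2,k}$ only in an integrated (over a period) sense. The resolution, which is the real content of the proof, is Step 1: on the competitor's territory $u_{1,k}$ decays exponentially in $\sqrt{k}$, precisely because $u_{1,k}$ itself is a subsolution of a scalar equation carrying the same large absorption $k\omega u_{2,k}$, and exponential decay comfortably beats the factor $k$. Everything else is a routine one-dimensional maximum-principle computation; the only care needed is that the constants $\rho$, $c_\eps$, $C_0$, $C_3$ and the lengths of the relevant components be independent of $k$, which the uniform convergence in \lemref{u1_u2_finally} and the Lipschitz regularity of $v$ guarantee.
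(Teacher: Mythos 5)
Your proof is correct and follows essentially the same two-step barrier strategy as the paper: first show that the intruding density decays exponentially in $\sqrt{k}$ on the competitor's territory (by comparison with a $\cosh$-supersolution of a scalar equation carrying the large absorption $k\omega u_{2,k}$), then feed that decay into the eigenfunction equation as a vanishing forcing term and run a second barrier argument on a slightly smaller set. One small refinement in your write-up is that you explicitly derive the two-sided bound $\left|\lambda_{1,k}\right|\le C$ by summing $\alpha$ times the first eigenvalue equation with the second and integrating over a period, whereas the paper uses the upper bound on $\lambda_{1,k}$ only implicitly in this lemma and justifies it a posteriori just before \lemref{conv_eign}.
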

\begin{proof}
We prove only the estimate in $\psi_k$, since the estimate in $\varphi_k$ follows the same reasoning. From now on, $\eps > 0$ and $\delta$ are fixed and we wish to show that 
\[
\sup_{\{v^+ > \eps\}} \psi_k \leq \delta.
\]

First, we observe that, since $v \in \mathscr{C}^{1,1}$, the constant 
\[
\ell=\frac{1}{4\|{v^+}'\|_{L^\infty}}> 0
\]
satisfies
\[
  \begin{split}
    \{v^+ > \eps\} + (-\ell \eps, \ell \eps) &\subset  \{v^+ > \eps / 2\} \textup{ and}\\
    \{v^+ > \eps / 2\} + (-\ell \eps, \ell \eps) &\subset \{v^+ > \eps / 4\}.
  \end{split}
\]  By uniform convergence of the sequence $(u_{1,k})_k$ to $v^+ / \alpha$, we have that, for $k$ large enough, $u_{1,k} > \frac{\eps}{8\alpha}$ on $\{v^+ > \eps / 4\}$. We now consider the equation  satisfied by $u_{2,k}$. We find that
\[
  \begin{split}
	  -d u_{2,k}''&=\mu_{2}\left(1-u_{2,k}\right)u_{2,k}-\alpha k\omega u_{1,k}u_{2,k} \\
	  &\leq \left[ \frac12 \|\mu_2\|_{L^\infty} - \inf_{x\in[0,L]} \omega(x)  k \frac{\eps}{8} \right] u_{2,k} \leq - A k \eps u_{2,k}  \qquad \textup{on $\{v^+ > \eps / 4\}$}
  \end{split}
\]
with a positive constant $A$ that can be chosen independently of $k$ and $\eps$ whenever $k$ is sufficiently large. 

Observe that the function $S:x\mapsto \beta \cosh(\sqrt{A k \eps/ d }x) $, $\beta > 0$, is a super-solution of the previous differential inequality and that $u_{2,k} \leq 1$ in $[0,L]$. Thus, choosing $\beta$ in such a way that $S(x) \leq 1$ for $x \in (-\ell \eps, \ell \eps)$, through a simple covering argument, the comparison principle yields
\[
  u_{2,k}(x) \leq 2 e^{-\sqrt{A k \eps^3 / d} \ell } \qquad  \textup{for all $x \in \{v^+ > \eps / 2\}$}.
\]
Finally, by the previous estimates, we deduce
\[
  \begin{split}
    -d \psi_k'' &= \alpha k \omega u_{2,k} \varphi_k + \left[\mu_2(1-2u_{2,k}) + \lambda - \alpha k \omega u_{1,k}\right] \psi_k \\
    &\leq B k e^{-\sqrt{A k \eps^3 / d} \ell } - C k \eps \psi_k   \qquad \textup{on $\{v^+ > \eps / 2\}$}
  \end{split}
\]
where, as before, the constants $B$ and $C$ can be chosen  independently of $k$ and $\eps$ whenever $k$ is sufficiently large. We can make use again a comparison with a super-solution, see \cite[Lemma 2.2]{Soave_Zilio_2015}, and conclude that
\[
  C k \eps \psi_{k}(x) \leq \frac{D}{d\ell^2} + B  k e^{-\sqrt{A k \eps^3 / d} \ell } \qquad  \textup{for all $x \in \{v^+ > \eps \}$}
\]
for $D$ universal positive constant. The result follows by taking $k$ large enough.
\end{proof}

With the uniform estimates of \lemref{bounded_eigen} and \lemref{decay_eigen} we are now in position to show that the 
solution $(u_1,u_2)$ constructed in the previous section is indeed linearly stable if $k$ is sufficiently large.

Of course, if $\liminf\limits_{k\to+\infty} \lambda_{1,k}=+\infty$, then the proof is done. Hence we assume from now on
that $\liminf\limits_{k\to+\infty} \lambda_{1,k}<+\infty$. Up to extraction of a subsequence, we also assume that 
$\lambda_{1,k}\to\liminf\limits_{k\to+\infty} \lambda_{1,k}$ as $k\to+\infty$. In particular, $\left(\lambda_{1,k}\right)_k$ is 
bounded. 

\begin{lem}\label{lem:conv_eign}For all $k>\bar k$, we define $Z_k \in \mathscr{C}^{1,1}_{L\textup{-per}}$ as
\[
  Z_k = \alpha\varphi_k + d\psi_k.
\]
Then the sequence of positive functions $\left(Z_k\right)_k$ is uniformly bounded in $W^{2,p}_{L\textup{-per}}$ and $\mathscr{C}^{1,\gamma}_{L\textup{-per}}$ for any $p < \infty$ and $\gamma < 1$. Each $Z_k$ solves
\[
    - Z_k'' = \left[\mu_1\left(1 - 2\frac{v^+}{\alpha}\right) + \frac{1}{d}\mu_2\left(1  + 2\frac{v^-}{d}\right)\right] Z_k + \lambda_{1,k} \sigma(v) Z_k + o_k(1)\\
\]
where $o_k(1)$ is a sequence of functions, bounded uniformly in $L^\infty$ and such that $o_k(1) \to 0$ in $L^p_{L\textup{-per}}$ for any $p < \infty$. 
\end{lem}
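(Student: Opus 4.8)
The plan is to reduce the statement to the scalar identity for $Z_k=\alpha\varphi_k+d\psi_k$ already used in the proof of \lemref{bounded_eigen}, and then to pass to the limit term by term. Writing the periodic principal eigenpair equations for $(\varphi_k,\psi_k)$, multiplying the $\varphi_k$-equation by $\alpha$ and adding the $\psi_k$-equation, the four strong-competition contributions $\pm\alpha k\omega u_{2,k}\varphi_k$ and $\pm\alpha k\omega u_{1,k}\psi_k$ cancel exactly and leave
\[
  -Z_k''=\alpha\mu_1\left(1-2u_{1,k}\right)\varphi_k+\mu_2\left(1-2u_{2,k}\right)\psi_k+\lambda_{1,k}\left(\alpha\varphi_k+\psi_k\right),
\]
an identity in which $k$ no longer enters as a coefficient; this is the backbone of the argument. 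For the uniform bounds, the normalization $\max\left(\alpha\varphi_k+d\psi_k\right)=1$ and the positivity of $\varphi_k,\psi_k$ force $0<Z_k\le1$, $0<\varphi_k\le1/\alpha$ and $0<\psi_k\le1/d$; since $(u_{1,k},u_{2,k})$ is uniformly bounded (\corref{closedness}) and $(\lambda_{1,k})_k$ is bounded (\lemref{bounded_eigen} together with the standing assumption $\liminf_k\lambda_{1,k}<+\infty$), the right-hand side above is bounded in $L^\infty_{L\textup{-per}}$ uniformly in $k$, so $Z_k$ is bounded in $\mathscr{C}^{1,1}_{L\textup{-per}}$, hence in $W^{2,p}_{L\textup{-per}}$ and $\mathscr{C}^{1,\gamma}_{L\textup{-per}}$ for all $p<\infty$ and $\gamma<1$.

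To identify the limit, I would fix $\eps>0$ and split a periodicity cell into $\{v>\eps\}$, $\{v<-\eps\}$ and $\{|v|\le\eps\}$. On $\{v>\eps\}$ one has $v^-=0$, $u_{1,k}\to v^+/\alpha$ uniformly (\lemref{u1_u2_finally}) and $\psi_k\to0$ uniformly (\lemref{decay_eigen}), so $\alpha\varphi_k=Z_k-d\psi_k=Z_k+o_k(1)$ uniformly there; hence the middle term of the backbone identity is $o_k(1)$, the first term tends uniformly to $\mu_1\left(1-2\frac{v^+}{\alpha}\right)Z_k$, and the last to $\lambda_{1,k}Z_k+o_k(1)=\lambda_{1,k}\sigma(v)Z_k+o_k(1)$ since $\sigma(v)=1$ there. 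On $\{v<-\eps\}$ the two components play symmetric roles ($\varphi_k\to0$, $u_{2,k}\to v^-/d$, $d\psi_k=Z_k+o_k(1)$, $\sigma(v)=1/d$) and one obtains the complementary expression $\frac1d\mu_2\left(1+2\frac{v^-}{d}\right)Z_k+\lambda_{1,k}\sigma(v)Z_k+o_k(1)$. Defining $g_k$ as the difference between $-Z_k''$ and $\left[\mu_1\left(1-2\frac{v^+}{\alpha}\right)+\frac1d\mu_2\left(1+2\frac{v^-}{d}\right)\right]Z_k+\lambda_{1,k}\sigma(v)Z_k$, the above shows $\|g_k\|_{L^\infty}\le C$ uniformly and $g_k\to0$ uniformly on $\{v>\eps\}\cup\{v<-\eps\}$ for each fixed $\eps$; on $\{|v|\le\eps\}$ all quantities are $L^\infty$-bounded uniformly in $k$ and $\eps$, so $\|g_k\|_{L^p(\{|v|\le\eps\})}\le C|\{|v|\le\eps\}|^{1/p}$. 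Since $v$ is linear with nonzero slope near its zeros, $v^{-1}(\{0\})$ is Lebesgue-negligible and $|\{|v|\le\eps\}|\to0$ as $\eps\to0$; letting first $k\to\infty$ and then $\eps\to0$ yields $\|g_k\|_{L^p_{L\textup{-per}}}\to0$ for every $p<\infty$, i.e. $g_k=o_k(1)$ in the sense required.

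The main obstacle is the transition layer around the zero set of $v$: there \lemref{decay_eigen} provides no decay, so $\varphi_k$ and $\psi_k$ cannot be compared individually with $Z_k$ and the coefficient of $Z_k$ cannot be identified pointwise. The device above circumvents this using only the uniform $L^\infty$ bounds on that layer together with $|\{|v|\le\eps\}|\to0$, which is also why one can only expect $L^p$ — and not $L^\infty$ — smallness of the remainder, exactly as the statement asserts. The remaining care is to ensure that every $o_k(1)$ coming from \lemref{u1_u2_finally} and \lemref{decay_eigen} is \emph{uniform} on the fixed sets $\{v>\eps\}$ and $\{v<-\eps\}$, which is precisely how those results are phrased.
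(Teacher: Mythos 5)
Your proposal is correct and uses exactly the same ingredients as the paper's proof: the scalar ``backbone'' identity for $Z_k=\alpha\varphi_k+d\psi_k$ obtained by cancellation of the competition terms, the normalization-based $L^\infty$ bounds on $\varphi_k,\psi_k,Z_k$, the boundedness of $\lambda_{1,k}$, and the two limiting facts supplied by \lemref{decay_eigen} and \lemref{u1_u2_finally}. Where you differ is in the bookkeeping: the paper rewrites the right-hand side of the backbone identity as the target expression plus three explicit error lines and then shows each line tends to $0$ in $L^p$, whereas you define the remainder $g_k$ globally and estimate it by the three-region split $\{v>\eps\}\cup\{v<-\eps\}\cup\{|v|\le\eps\}$, sending $k\to\infty$ before $\eps\to0$. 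The two organizations are interchangeable, since the paper's $L^p$ estimates of its second and fourth error lines themselves implicitly require a split of this kind (\lemref{decay_eigen} only controls the ``wrong'' component of the eigenfunction away from the zero set of $v$); your version makes that mechanism explicit, and your closing remark that this is why one gets $L^p$ but not $L^\infty$ smallness of the remainder is a good observation.

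One point you should state rather than leave implicit. On $\{v>\eps\}$ your identification of the limit of the first term as $\mu_1\left(1-2\frac{v^+}{\alpha}\right)Z_k$ is correct, but the stated target there reads $\left[\mu_1\left(1-2\frac{v^+}{\alpha}\right)+\frac{1}{d}\mu_2\right]Z_k+\lambda_{1,k}Z_k$, so your remainder $g_k$ picks up the extra term $-\frac{1}{d}\mu_2 Z_k$ on $\{v>\eps\}$ (and symmetrically $-\mu_1 Z_k$ on $\{v<-\eps\}$), which does not tend to $0$ in general. It vanishes precisely because the construction of Section~2 gives $\{\mu_1>0\}\subset\subset\{v>0\}$ and $\{\mu_2>0\}\subset\subset\{v<0\}$, so that the coefficient of the target is the same with or without the indicator functions $\mathbf{1}_{v>0},\mathbf{1}_{v<0}$ appearing in (\ref{eq:eig_eq_v}). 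The paper relies on exactly the same support property when it invokes \lemref{decay_eigen} to dispose of its fourth error line, so this is not a discrepancy with the paper, but a complete write-up of your argument should invoke it when asserting that $g_k\to0$ uniformly on $\{v>\eps\}\cup\{v<-\eps\}$.
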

\begin{proof}
Once again, we take the sum of the equation in $\alpha \varphi_k$ and the equation in $\psi_k$. We thus find
\begin{equation}\label{eq:zn}
  -\left(\alpha\varphi_k + d \psi_k\right)'' = \mu_1 \left(1 - 2 u_{1,k}\right) \alpha\varphi_k + \mu_2\left(1 - 2 u_{2,k}\right) \psi_k + \lambda_{1,k} \left(\alpha\varphi_k + \psi_k\right).
\end{equation}
We observe that the terms in the right hand side of (\ref{eq:zn}) are uniformly bounded. Thus 
there exists $Z \in (H^{2} \cap \mathscr{C}^{1,\gamma})_{L\textup{-per}}$ such that, up to subsequence, $Z_k \to Z \geq 0$. By uniform convergence we have $\max Z = 1$. As a consequence of \lemref{decay_eigen}, we also have that
\[
	\left(\alpha\varphi_k + \psi_k\right) \to \left( \mathbf{1}_{v>0} + \frac{1}{d} \mathbf{1}_{v< 0} \right) Z = \sigma(v) Z
\]
in $L^p$ for any $p < \infty$. 

We now rearrange the terms of (\ref{eq:zn}) as follows:
\begin{multline*}
    -Z_k'' = \left[\mu_1\left(1 - 2 \frac{v^+}{\alpha}\right) + \frac{1}{d}\mu_2\left(1 + 2 \frac{v^-}{d}\right)\right] Z_k + \lambda_{1,k} \sigma(v) Z_k \\
    + \lambda_{1,k} \left[ \left(\alpha\varphi_k + \psi_k \right) - \sigma(v) Z_k \right] \\
    + \left[ 2 \alpha \mu_1 \left( \frac{v^+}{\alpha} - u_{1,k}\right) \varphi_k - 2 \mu_2 \left( \frac{v^-}{d} + u_{2,k} \right)\psi_k  \right] \\
    - \left(\mu_1 \left(1 -2 \frac{v^+}{\alpha}\right)d\psi_k + \frac{1}{d}\mu_2\left(1+2\frac{v^-}{d}\right)\alpha\varphi_k \right).
\end{multline*}
In order to conclude, we need to show that the second, third and fourth lines in the previous equation are small contributions in the $L^p_{L\textup{-per}}$ norm. Now, we just proved that the second line converges to zero in the $L^p$ topology. The third line also converges to zero, since $(u_1,u_2)_k \to \left(\frac{v^+}{\alpha}, \frac{v^-}{d}\right)$ in $\mathscr{C}^{0,\gamma}$. Finally, by \lemref{decay_eigen}, the fourth line also converges to zero in $L^p_{L\textup{-per}}$.
\end{proof}

We now recall that the solution $v$ is, by construction, linearly stable in the sense of (\ref{eq:parabolic_quasilinear_equation}). This implies in particular that any eigenpair $(\lambda,Z)$ satisfying
\begin{equation}\label{eq:eig_eq_v}
	- Z''-\left[\mu_{1}\left(1-2\frac{v^+}{\alpha}\right)\mathbf{1}_{v>0}+\frac{1}{d} \mu_{2}\left(1+2\frac{v^-}{d}\right)\mathbf{1}_{v<0}\right]Z = \lambda \sigma(v) Z
\end{equation}
is such that $\lambda$ has a positive real part. More precisely, using the uniqueness part of the Krein--Rutman theorem, we can 
establish the following convergence result.

\begin{lem}\label{lem:stab_sol}
There exists $\bar k > 0$ such that for any $k \geq \bar k$ the solution $(u_{1,k},u_{2,k})$ is linearly stable. 

Furthermore, the sequence $\left(\left(\lambda_{1,k},Z_k\right)\right)_k$ and the principal eigenpair $(\lambda_1,Z)$ given by the notion
of stability in the sense of (\ref{eq:parabolic_quasilinear_equation}) satisfy the following equalities:
\[
  \liminf_{k \to +\infty} \lambda_{1,k} = \lambda_1 > 0 \quad \textup{and} \quad \lim_{k \to +\infty} Z_k = Z 
\]
in $W^{2,p}_{L\textup{-per}}$ and $\mathscr{C}^{1,\gamma}_{L\textup{-per}}$ for any $p < \infty$ and $\gamma < 1$.
\end{lem}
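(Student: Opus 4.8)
The plan is to pass to the limit in the eigenvalue problem for $(u_{1,k},u_{2,k})$ and identify the limit with the eigenvalue problem \eqref{eq:eig_eq_v} for $v$, using the convergence prepared in \lemref{conv_eign}. The conclusion $\lambda_1 > 0$ then follows from the hypothesis that $v$ is linearly stable in the sense of \eqref{eq:parabolic_quasilinear_equation}, and positivity of $\lambda_{1,k}$ for $k$ large follows once we know $\lambda_{1,k} \to \lambda_1 > 0$.

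First I would invoke \lemref{conv_eign}: the sequence $(Z_k)_k$ is bounded in $W^{2,p}_{L\textup{-per}} \cap \mathscr{C}^{1,\gamma}_{L\textup{-per}}$ and, up to a subsequence, converges in these topologies to some nonnegative $Z$ with $\max Z = 1$ (so $Z \not\equiv 0$). Passing to the limit in the equation displayed in \lemref{conv_eign}, and using that $\lambda_{1,k}$ is bounded (recall we reduced to the case $\liminf\lambda_{1,k} < +\infty$ and extracted a subsequence with $\lambda_{1,k} \to \liminf\lambda_{1,k} =: \lambda_\infty$) together with the fact that the $o_k(1)$ term tends to $0$ in $L^p_{L\textup{-per}}$, I obtain that $(\lambda_\infty, Z)$ solves \eqref{eq:eig_eq_v} weakly; elliptic regularity upgrades $Z$ to $W^{2,p}_{L\textup{-per}}$ so it is a genuine solution. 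Since $Z \geq 0$, $Z \not\equiv 0$, and $\sigma(v) > 0$ a.e., the function $Z$ is a nonnegative nontrivial solution of the $L$-periodic eigenvalue problem; by the uniqueness part of the Krein--Rutman theorem applied to the self-adjoint operator $-\frac{\mathrm{d}^2}{\mathrm{d}x^2} - \hat\sigma(v) f_1[v]$ (equivalently, the weighted problem with weight $\sigma(v)$), $Z$ must be the principal eigenfunction and $\lambda_\infty = \lambda_1$ the periodic principal eigenvalue. By the assumed linear stability of $v$ in the sense of \eqref{eq:parabolic_quasilinear_equation}, $\lambda_1 > 0$; hence $\lambda_\infty = \liminf_{k\to+\infty}\lambda_{1,k} = \lambda_1 > 0$. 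Since this holds for every subsequence, the full $\liminf$ equals $\lambda_1$, and in particular $\lambda_{1,k} > 0$ for all $k$ large enough, i.e. $(u_{1,k},u_{2,k})$ is linearly stable. The same subsequence-and-uniqueness argument gives $Z_k \to Z$ in $W^{2,p}_{L\textup{-per}}$ and $\mathscr{C}^{1,\gamma}_{L\textup{-per}}$ along the full sequence.

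The main obstacle is the correct identification of the limiting eigenvalue problem: one must be careful that the normalization $\max(\alpha\varphi_k + d\psi_k) = 1$ survives the limit (so that $Z \not\equiv 0$), which is where \lemref{decay_eigen} is essential --- it guarantees that $\alpha\varphi_k + d\psi_k$ does not concentrate away from $Z$ and that the "wrong-region" contributions ($\psi_k$ on $\{v>0\}$ and $\varphi_k$ on $\{v<0\}$) vanish in $L^p$, so the limit of $\alpha\varphi_k + \psi_k$ is genuinely $\sigma(v)Z$ and not something degenerate. A secondary subtlety is the low regularity of $f_1[v]$ at $\{v = 0\}$: since $v$ is linear near its zeros (the construction in \propref{Existence_for_the_scalar_eq_in_L_infty}), the set $\{v = 0\}$ has measure zero and the coefficients $\mathbf{1}_{v>0}$, $\mathbf{1}_{v<0}$ are well-defined a.e., so the eigenvalue problem \eqref{eq:eig_eq_v} makes sense and the Krein--Rutman theory applies without trouble. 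Finally, one should note that the weight $\sigma(v)$ is bounded between $\frac1d$ and $1$, so the weighted eigenvalue problem is equivalent to a standard one and the principal eigenvalue is simple with a positive eigenfunction, which is exactly what the uniqueness argument requires.
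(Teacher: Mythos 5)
Your proposal is correct and follows essentially the same route as the paper's proof: invoke Lemma \ref{lem:conv_eign} to pass to the limit in the equation for $Z_k$, identify the limit as a positive $L$-periodic eigenfunction of \eqref{eq:eig_eq_v}, and conclude by the uniqueness part of the Krein--Rutman theorem together with the linear stability of $v$ in the sense of \eqref{eq:parabolic_quasilinear_equation}. Your additional remarks on the normalization surviving the limit (via Lemma \ref{lem:decay_eigen}), the measure-zero nodal set of $v$, and the boundedness of the weight $\sigma(v)$ are all consistent elaborations of what the paper leaves implicit.
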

\begin{proof}
In view of \lemref{conv_eign}, $(Z_k)_k$ converges to some limit 
$Z_\infty$ in $W^{2,p}_{L\textup{-per}}$ and $\mathscr{C}^{1,\gamma}$ for any $p < \infty$ and $\gamma < 1$. 
This limit is obviously an eigenfunction associated with the eigenvalue $\liminf\limits_{k \to +\infty} \lambda_{1,k}$ and, moreover, 
$Z_\infty$ is $L$-periodic, $\max Z_\infty = 1$ and $Z_\infty > 0$. Hence, by uniqueness up to normalization of the 
positive eigenfunction, the result follows. 
\end{proof}

\section*{Acknowledgments}

The authors thank Gr\'{e}goire Nadin for the attention he paid to this
work and C\'{e}cile Carr\`{e}re and Henri Berestycki for pointing out the
interest of this problem. 

\bibliographystyle{plain}
\bibliography{ref}

\end{document}